\tikzset{elegant/.style={smooth,thick,samples=30}}
\tikzset{eaxis/.style={->,>=stealth}}
\DeclareMathOperator{\sech}{sech}
\DeclareMathOperator{\csch}{csch}
\DeclareMathOperator{\arcsinh}{arcsinh}
\DeclareMathOperator{\arcoth}{arcoth}
\DeclareMathOperator{\arccosh}{arccosh}
\newtheorem{theorem}{\rm\bf Theorem}[section]
\newtheorem{proposition}[theorem]{\rm\bf Proposition}
\newtheorem{lemma}[theorem]{\rm\bf Lemma}
\newtheorem{corollary}[theorem]{\rm\bf Corollary}
\newtheorem*{proposition 1}{\rm\bf Proposition 1}
\newtheorem*{proposition 2}{\rm\bf Proposition 2}
\theoremstyle{definition}
\newtheorem{definition}[theorem]{\rm\bf Definition}
\theoremstyle{remark}
\newtheorem{remark}[theorem]{\rm\bf Remark}
\newtheorem{example}[theorem]{\rm\bf Example}
\newtheorem{question}[theorem]{\rm\bf Question}
\newtheorem*{case 1}{\rm\bf Case 1}
\newtheorem*{case 2}{\rm\bf Case 2}
\newtheorem*{case 3}{\rm\bf Case 3}
\newtheorem*{case 4}{\rm\bf Case 4}
\newtheorem*{subcase 1}{\rm\bf Subcase 1}
\newtheorem*{subcase 2}{\rm\bf Subcase 2}
\newtheorem*{subcase 3}{\rm\bf Subcase 3}
\newtheorem*{subcase 4}{\rm\bf Subcase 4}
\def\interieur#1{\mathord{\mathop{\kern 0pt #1}\limits^\circ}}
\newcounter{ClaimProofCounter}
\newcounter{SthProofCounter}
\newcounter{CaseCounter}
\newcounter{SubcaseCounter}
\title[Teichm\"uller space of infinite type]{The arc metric on Teichm\"uller spaces of surfaces of infinite type with boundary}
\author{Qiyu Chen}
\address{Qiyu Chen: School of Mathematics, Sun Yat-sen
University, 510275, Guangzhou, P. R. China}
\email{chenqy0121@gmail.com}
\author{Lixin Liu}
\address{Lixin Liu: School of Mathematics, Sun Yat-sen University, 510275, Guangzhou, P. R. China}
\email{mcsllx@mail.sysu.edu.cn}
\date{\today}
\begin{document}

	\begin{abstract}
      Let $X_{0}$ be a complete hyperbolic surface of infinite type with geodesic boundary which admits a countable pair of pants decomposition. As an application of the Basmajian identity for complete bordered hyperbolic surfaces of infinite type with limit sets of 1-dimensional measure zero, we define an asymmetric metric (which is called arc metric) on the quasiconformal Teichm\"uller space $\mathcal{T}(X_{0})$ provided that $X_{0}$ satisfies a geometric condition. Furthermore, we construct several examples of hyperbolic surfaces of infinite type satisfying the geometric condition and discuss the relation between the Shiga's condition and the geometric condition.

    \bigskip
	\noindent Keywords: Teichm\"uller space; arc metric; Thurston metric; Basmajian identity.

\thanks{The work is partially supported by NSFC, No: 11271378}
	\end{abstract}

	\maketitle
	\section{Introduction}
     The Thurston metric was originally defined by Thurston \cite{Thurston1998} as an asymmetric metric to solve the extremal problem of finding the best Lipschitz map in the homotopy class of homeomorphisms between two hyperbolic surfaces of finite type without boundary.
     For surfaces of finite type with boundary, a modification of the Thurston metric, the so-called arc metric (see more precisely below) was studied
     \cite{ALPS1, LPST, LTBoundary,PG2010}.

      It's a natural problem whether the Thurston metric is well-defined in Teichm\"uller spaces of surfaces of infinite type (see \cite{Su} for example).
      In this paper, we define the arc metric, a modification of the Thurston metric, on the quasiconformal Teichm\"uller space $\mathcal{T}(X_{0})$ of a complete hyperbolic surface $X_{0}$ of infinite type with geodesic boundary provided that $X_{0}$ satisfies the geometric condition $(\star)$ (see the definition below).

      Let $X_{0}$ be a complete hyperbolic surface of infinite type with geodesic boundary which admits a countable pair of pants decomposition. The completeness for hyperbolic surfaces or hyperbolic structures that we consider throughout this paper means that each boundary component of this surface is a closed geodesic and each puncture of this surface has a neighbourhood which is isometric to a cusp, that is, a surface isometric to the quotient of the region $\{z=x+iy: y>a\}$ of the upper half-plane $\mathbb{H}^{2}$, for some $a>0$, by the isometric group generated by $z\rightarrow z+1$.

      Denote the boundary of $X_{0}$ by $\partial X_{0}$ and denote the set of boundary components of $X_{0}$ by $\mathcal{B}(X_{0})=\{\beta_{1}, \beta_{2}, ... , \beta_{k}, ... \}$.
      Note that the number of boundary components of $X_{0}$ and the number of cusps of $X_{0}$ can be countably infinite.

      $(X,f)$ is said to be  a \emph{marked hyperbolic surface} if $X$ is a complete hyperbolic surface of infinite type and $f:X_{0}\rightarrow X$ is a quasiconformal mapping which leaves each puncture and each boundary component setwise fixed. Two marked hyperbolic surfaces $(X_{1},f_{1})$ and $(X_{2},f_{2})$ are said to be \emph{equivalent} if $f_{2}\circ f_{1}^{-1}$ is homotopic to an isometry from $X_{1}$ to $X_{2}$. Denote the equivalence class of  $(X,f)$ by  $[X,f]$.
       We denote by $\mathcal{T}(X_{0})$  the \emph{reduced quasiconformal Teichm\"uller space} of $X_{0}$ (see \cite{Earle,LP,Thurston-notes}), which is the set of equivalence classes of marked hyperbolic surfaces. It deserves to  mention that all Teichm\"uller spaces that we consider here are reduced, which means that homotopies do not necessarily fix $\partial X_{0}$ pointwise.

       For the sake of simplicity, we shall call $\mathcal{T}(X_{0})$ the Teichm\"uller space of $X_{0}$ for short and denote a marked hyperbolic surface $(X,f)$ or its equivalence class $[X,f]$ by  $X$, without explicit reference to the marking.

               The Teichm\"uller space $\mathcal{T}(X_{0})$ has a complete distance $d_{T}$ called \emph{the Teichm\"uller distance} which is defined by
        \begin{equation*}
				d_{T}([X_{1}, f_{1}], [ X_{2}, f_{2}]) = \frac{1}{2} \log \inf_{g\simeq
               f_{2}\circ f_{1}^{-1}}K[g],
		\end{equation*}
        where the infimum is taken over all quasiconformal mappings $g:X_{1}\rightarrow X_{2}$ homotopic to $ f_{2}\circ f_{1}^{-1}$ and $K[g]$ is the maximal dilation of $g$.

          Recall that a \emph{pair of pants} is a surface whose interior is homeomorphic to a sphere with three disjoint closed disks removed whose boundary is a (possibly empty) disjoint union of circles. A \emph{generalized hyperbolic pair of pants} is a pair of pants equipped with a convex hyperbolic metric in which every topological hole corresponds to either a closed boundary geodesic or a cusp. In particular, a \emph{hyperbolic pair of pants} is a generalized hyperbolic pair of pants with three closed geodesic boundary components.

       A \emph{pair of pants decomposition} of a hyperbolic surface $X$ is a system of pairwise disjoint simple closed geodesics $\mathcal{P}=\{C_{i}\}_{i\in I}$  (for convenience, we ignore the degenerated ones which are homotopic to punctures) such that  $X\setminus (\cup_{i\in I} C_{i})$ is a disjoint union of the interior of generalized hyperbolic pairs of pants. Moreover, if $\mathcal{P}$ is countably infinite, we say that $X$ admits a \emph{countable pair of pants decomposition}. Note that the hyperbolic surfaces of infinite type in this paper are assumed to admit a countable pair of pants decomposition.

  \subsection{Related definitions and notations.}
       Let $S$ be a surface with negative Euler characteristic. A simple closed curve on $S$ is said to be \emph{interior} if it is contained in the interior of $S$. It is said to be \emph{peripheral} if it is homotopic to a 
       puncture. It is said to be \emph{essential} if it is 
       neither peripheral nor isotopic to a point.
       Let $\mathcal{S}(S)$ denote the set of homotopy classes of essential simple closed curves on $S$.

        If $S$ has non-empty boundary $\partial S$, we denote the set of boundary components of $S$ by $\mathcal{B}(S)$. An \emph{arc} on $S$ is the image of a compact interval, which is immersed in $S$, with its interior (possibly with self-intersections) contained in the interior of $S$ and its endpoints lying on $\partial S$. In particular, a \emph{simple arc} is an arc without self-intersections. An arc is said to be \emph{essential} if it is not isotopic (relative to $\partial S$) to a subset of $\partial S$. Note that we do not require the homotopies to fix $\partial S$ pointwise.
       Denote by $\mathcal{A}(S)$ the set of homotopy classes of essential arcs on $S$ and by $\mathcal{A}'(S)$ the subset of $\mathcal{A}(S)$ consisting of homotopy classes of essential simple arcs on $S$.

       For any $\alpha \in \mathcal{S}(S)\cup\mathcal{A}(S)$ and any hyperbolic structure $X$ on $S$, we denote by $\ell_{\alpha}(X)$ the \emph{hyperbolic length} of $\alpha$ on $X$, that is, the length of the (unique) geodesic representative of $\alpha$ on the hyperbolic surface $X$.

        If $S$ is a surface of finite type without boundary, the \emph{Thurston metric} $d_{Th}$ (see \cite{Thurston1998}) is an asymmetric metric on the Teichm\"uller space $\mathcal{T}(S)$ defined by
       \begin{equation*}
       d_{Th}(X,Y)=\log\sup_{\alpha\in\mathcal{S}(S)}
       \frac{\ell_{\alpha}(Y)}{\ell_{\alpha}(X)},
       \end{equation*}
       for all $X,Y\in\mathcal{T}(S)$.

       If $S$ is a surface of finite type with boundary, the \emph{arc metric} $d_{A}$ (see \cite{ LPST, LTBoundary}) on $\mathcal{T}(S)$, as a modification of the Thurston metric, is defined by

         \begin{equation*}
       d_{A}(X,Y)=\log\sup_{\alpha\in\mathcal{A}'(S)\cup\mathcal{S}(S)}
       \frac{\ell_{\alpha}(Y)}{\ell_{\alpha}(X)},
       \end{equation*}
       for all $X,Y\in\mathcal{T}(S)$. It's essential to consider the union of closed curves and arcs in the definition of $d_{A}$ for surfaces of finite type with boundary, since there exist two distinct hyperbolic structures $X,Y$ (see \cite{Parlier,PG2010}) on $S$ such that $\ell_{Y}(\alpha)<\ell_{X}(\alpha)$ for all $\alpha\in\mathcal{S}(S)$. This implies that
         \begin{equation*}
     \log\sup_{\alpha\in\mathcal{S}(S)}
       \frac{\ell_{\alpha}(Y)}{\ell_{\alpha}(X)}\leq 0.
       \end{equation*}
       Moreover, it was shown in \cite{LPST} that
       \begin{equation*}
     \log\sup_{\alpha\in\mathcal{A}'(S)\cup\mathcal{S}(S)} \frac{\ell_{\alpha}(Y)}{\ell_{\alpha}(X)}
     =\log\sup_{\alpha\in\mathcal{A}'(S)\cup\mathcal{B}(S)} \frac{\ell_{\alpha}(Y)}{\ell_{\alpha}(X)}.
       \end{equation*}
       Therefore, the arc metric $d_{A}$ can be also defined by the following formula
         \begin{equation*}
       d_{A}(X,Y)=\log\sup_{\alpha\in\mathcal{A}'(S)\cup\mathcal{B}(S)}
       \frac{\ell_{\alpha}(Y)}{\ell_{\alpha}(X)}.
       \end{equation*}

        Recall that a \emph{Fuchsian group} is a torsion-free discrete group of orientation-preserving isometries on $\mathbb{H}^{2}$. Let $R$ be a hyperbolic Riemann surface. Denote by $\Gamma_{R}$ the \emph{Fuchsian group of $R$}, which is the Fuchsian group such that $R$ is the quotient of $\mathbb{H}^{2}$ by $\Gamma_{R}$.

       Denote by $\Lambda(\Gamma)$ the \emph{limit set} of a Fuchsian group $\Gamma$ acting on the upper half-plane $\mathbb{H}^{2}$, which is a set of points on $\widehat{\mathbb{R}}=\mathbb{R}\cup \{\infty\}$ where the orbit by $\Gamma$ accumulates. Moreover, the complement of $\Lambda(\Gamma)$ in $\widehat{\mathbb{R}}$ is said to be the \emph{set of discontinuity}, which is denoted by $\Omega(\Gamma)$.
        $\Gamma$ is said to be of the \emph{first kind} if $\Omega(\Gamma)$ is empty, otherwise it is said to be of the \emph{second kind}. Note that the Fuchsian group of a bordered Riemann surface is of the second kind. The Fuchsian groups we consider in this paper are of the second kind and infinitely generated unless otherwise indicated.

        Let $C(\Lambda(\Gamma))$ be the convex hull in $\mathbb{H}^{2}$ of the limit set $\Lambda(\Gamma)$ and let $\partial C(\Lambda(\Gamma))$ be the boundary of $C(\Lambda(\Gamma))$ in $\mathbb{H}^{2}$. The \emph{convex core} $C_{R}$ of a hyperbolic Riemann surface $R$ is  the quotient  of $C(\Lambda(\Gamma_{R}))$ by $\Gamma_{R}$, which is the smallest closed convex subregion of $R$ such that its inclusion map induces a  homotopy equivalence. For a hyperbolic surface $X$, we denote by $\Gamma_{X}$ the Fuchsian group of the Riemann surface with convex core $X$.

        \begin{definition}\label{definition of a removable set}
        For a Fuchsian group $\Gamma$, we say that a disjoint union of regions $A=\cup_{n\in \mathbb{N}}A_{n}$ in $\mathbb{H}^{2}$ is \emph{removable for $\Gamma$} (see \cite{Matsuzaki}) if it satisfies the following conditions:
        \begin{enumerate}[(1)]
          \item Each $A_{n}$ is a simply connected open set in $\mathbb{H}^{2}$ which is either a hyperbolic disk, a horodisk tangent to $\widehat{\mathbb{R}}$ or an $r$-neighbourhood of a complete geodesic in $\mathbb{H}^2$ for some $r>0$ (note that the radius $r$ depends on the choice of the complete geodesic and not necessarily uniformly bounded).
          \item The set $A$ is invariant under the action of $\Gamma$.
        \end{enumerate}
       \end{definition}

     \begin{definition}
     We say that  $X_{0}$ satisfies \emph{the geometric condition $(\star)$} (see \cite{Matsuzaki}) if there is a positive constant $L$ and a removable set $A$ for $\Gamma_{X_{0}}$ such that 
     all points of $C(\Lambda(\Gamma_{X_0}))\setminus A$ lie within a distance $L$ of $\partial C(\Lambda(\Gamma_{X_0}))$.
     \end{definition}

      \subsection{Main theorems.}

 \newtheorem*{thm4.7}{\rm\bf Theorem 4.7}
\begin{thm4.7}\label{Thurston metric}
  {\it Let $X_{0}$ be a complete hyperbolic surface of infinite type with boundary which satisfies the geometric condition $(\star)$. Then the following two functions $d$ and $\overline{d}$ on $\mathcal{T}(X_{0})\times \mathcal{T}(X_{0})$ are asymmetric metrics, where
   \begin{equation*}
   \begin{split}
   d(X,Y)=\log\sup_{\alpha \in \mathcal{A}(X_{0})\cup\mathcal{S}(X_{0})}\frac{\ell_{\alpha}(Y)}{\ell_{\alpha}(X)},\\
     \overline{d}(X,Y)=\log\sup_{\alpha \in \mathcal{A}(X_{0})\cup\mathcal{S}(X_{0})}\frac{\ell_{\alpha}(X)}{\ell_{\alpha}(Y)},
     \end{split}
    \end{equation*}
  for all $X, Y\in \mathcal{T}(X_{0})$.}
\end{thm4.7}

The asymmetric metric $d$ is an analogue, for surfaces of infinite type with boundary, of the arc metrics defined for surfaces of finite type with boundary. We also call $d$ the \emph{arc metric} on $\mathcal{T}(X_{0})$.

\newtheorem*{thm4.10}{\rm\bf Theorem 4.10}
\begin{thm4.10}\label{asymmetric arc metrics}
     {\it  Let $X_{0}$ be a complete hyperbolic surface of infinite type with boundary, then the following equality still holds for all $X, Y\in \mathcal{T}(X_{0})$.
     \begin{equation*}
  \sup_{\alpha \in \mathcal{A}(X_{0})\cup\mathcal{B}(X_{0})}\frac{\ell_{\alpha}(Y)}{\ell_{\alpha}(X)}
  =\sup_{\gamma \in \mathcal{A}(X_{0})\cup\mathcal{S}( X_{0})}\frac{\ell_{\gamma}(Y)}{\ell_{\gamma}(X)}.
   \end{equation*}
  In particular, if $X_{0}$ satisfies the geometric condition $(\star)$, then the following equality defines the same asymmetric metric.
   \begin{equation*}
   \log\sup_{\alpha \in \mathcal{A}(X_{0})\cup\mathcal{B}(X_{0})}\frac{\ell_{\alpha}(Y)}{\ell_{\alpha}(X)}
    =\log\sup_{\gamma \in \mathcal{A}(X_{0})\cup\mathcal{S}( X_{0})}\frac{\ell_{\gamma}(Y)}{\ell_{\gamma}(X)}.
   \end{equation*}}
   \end{thm4.10}

         \subsection{Outline of the paper.} In Section 2 we give the Basmajian identity and the generalized McShane identity for complete bordered hyperbolic surfaces of infinite type with limit sets of 1-dimensional measure zero. In Section 3, we consider the geometric condition $(\star)$ and discuss its properties. In Section 4, we define an asymmetric metric on $\mathcal{T}(X_{0})$ and give the proofs of Theorem \ref{Thurston metric} and
         Theorem \ref{asymmetric arc metrics}.  In the last section, we construct several examples of hyperbolic surfaces of infinite type satisfying the geometric condition $(\star)$ and discuss the relation between the Shiga's condition and the geometric condition $(\star)$.

         \subsection{Acknowledgements} The authors would like to thank Professors Katsuhiko Matsuzaki for his help and suggestions.

      \section{Basmajian identity and generalized McShane identity for complete bordered hyperbolic surfaces of infinite type}

    In this section we present the Basmajian identity and the generalized McShane identity for a complete bordered hyperbolic surface $X$ of infinite type with the limit set $\Lambda(\Gamma_{X})$ of 1-dimensional measure zero. The Basmajian identity is a direct result of the orthogonal spectrum theorem given by Basmajian\cite{Basmajian1} if the limit set of the Fuchsian group $\Gamma_{X}$ has 1-dimensional measure zero. We sketch the proof of the generalized McShane identity and refer to \cite{Bridgeman2} for details.

       \subsection{Basmajian identity for complete bordered hyperbolic surfaces of infinite type.}
       For the convenience of the exposition of the orthogonal spectrum theorem given by Basmajian, we introduce the related notations and terminology (see \cite{Basmajian1}).

       Let $M^{n}$ be an orientable hyperbolic manifold of dimension $n\geq2$. A \emph{hypersurface} $S$ in $M^{n}$ is a codimension one complete submanifold endowed with the induced metric. $S$ is said to be \emph{totally geodesic} if every geodesic on $S$ is a geodesic in $M^{n}$.

       Let $S_{1}$ be a totally geodesic hypersurface which is either disjoint from $S$ or equal to $S$. Two paths from $S$ to $S_{1}$ are said to be \emph{freely homotopic relative to $S$ and $S_{1}$} if there is a homotopy in $M^{n}$ between them which keeps the initial point in $S$ and the terminal point in $S_{1}$. The equivalence class of a path $\alpha$ is called the \emph{relative free homotopy class of $\alpha$} and it is said to be \emph{trival} if $S=S_{1}$ and $\alpha$ is homotopic to a single point in $S$.

      Hypersurfaces $S$ and $S_1$ are called \emph{asymptotic} if there exists a path from $S$ to $S_1$ such that its relative free homotopy class is nontrival and contains paths of arbitrary short length. In this case, the length of the homotopy class is defined to be zero. If $S$ and $S_1$ are not asymptotic, then each nontrival relative free homotopy class of a path $\alpha$ from $S$ to $S_1$ contains a shortest path which is the unique common orthogonal in the class $[\alpha]$. The length of this homotpy class $[\alpha]$ is defined to be the length of the common orthogonal in $[\alpha]$.

       Let $\mathcal{C}$ be a (possibly infinite) set of mutually disjoint embedded totally geodesic hypersurfaces in $M^{n}$. For each non-negative integer $k$, the \emph{$k$-th orthogonal spectrum of $M^{n}$ related to $S$ and $\mathcal{C}$} is denoted by $\mathcal{O}_{k}(M^{n};S,\mathcal{C})$, which is the ordered nondecreasing sequence of lengths of nontrival relative free homotopy classes of paths which start in $S$ and go in the direction of the normal to $S$, cross $\mathcal{C}$ along the way $k$ times, and end in a hypersurface contained in $\mathcal{C}$ perpendicularly. Note that the direction of the normal to $S$ here is chosen appropriately on one side, such that the lifts starting from $\tilde{S}$ of those paths lie to the same side of $\tilde{S}$ for a fixed connected component $\tilde{S}$ of a lift of $S$.

     Denote by $m_{h}$ the hyperbolic measure on $S$ inherited from the volume element on $M^{n}$, and by $V_{n}(r)$ the hyperbolic volume of the $n$-dimensional ball of radius $r$.

      \begin{theorem}\label{Original Basmajian theorem}\emph{(Basmajian \cite{Basmajian1}, The Orthogonal Spectrum Theorem)}
      Let $\mathcal{C}$ be a disjoint set of embedded totally geodesic hypersurfaces in the hyperbolic manifold $M^{n}$ and let $S$ be an embedded oriented hypersurface which is totally geodesic. Suppose further that $S$ is either disjoint from $\mathcal{C}$ or one of the hypersurfaces in $\mathcal{C}$, and that no nontrival relative free homotopy class from $S$ to $\mathcal{C}$ has length zero. Then the $k$-th orthogonal spectrum,
      \begin{equation*}
      \mathcal{O}_{k}(M^{n};S,\mathcal{C})=\{d_{i}\},
      \end{equation*}
      satisfies:
       \begin{equation}
      Vol_{n-1}(S)=m_{h}(F_{k})+\sum_{i=1}^{\infty} V_{n-1}(r(d_{i})),
      \end{equation}
      where $F_{k}$  is the subset of $S$ consisting of all points whose corresponding oriented normal ray to $S$ 
       intersects $\mathcal{C}$ at most $k$ times, and $r(x)=\log\coth(\frac{x}{2})$.
      \end{theorem}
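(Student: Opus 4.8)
The plan is to prove the identity as a disintegration of the $(n-1)$-dimensional volume of $S$ along the outward normal rays, carried out in the universal cover. First I would pass to $\mathbb{H}^{n}$ and fix a connected component $\tilde{S}$ of the preimage of $S$, a totally geodesic copy of $\mathbb{H}^{n-1}$. The preimage $\tilde{\mathcal{C}}=\{\tilde{C}_{\alpha}\}$ of $\mathcal{C}$ is a $\Gamma$-invariant family of pairwise disjoint hyperplanes, each disjoint from $\tilde{S}$ (in the degenerate case $S\subset\mathcal{C}$ one discards the lift equal to $\tilde{S}$ by using the chosen normal orientation). For $p\in\tilde{S}$ let $\gamma_{p}$ be the geodesic ray issuing from $p$ in the outward normal direction, and let $N(p)$ be the number of hyperplanes of $\tilde{\mathcal{C}}$ that $\gamma_{p}$ meets. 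The whole identity will follow once one understands the \emph{shadow} of each $\tilde{C}_{\alpha}$ on $\tilde{S}$ and how these shadows overlap.

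Second, I would establish the \emph{shadow lemma}: if $\tilde{C}_{\alpha}$ lies at distance $d_{\alpha}$ from $\tilde{S}$, then the shadow $\mathrm{sh}(\tilde{C}_{\alpha})=\{p\in\tilde{S}: \gamma_{p}\cap\tilde{C}_{\alpha}\neq\emptyset\}$ is exactly the metric ball in $\tilde{S}$ of radius $r(d_{\alpha})=\log\coth(d_{\alpha}/2)$ centred at the foot of the common perpendicular from $\tilde{S}$ to $\tilde{C}_{\alpha}$, so that $m_{h}(\mathrm{sh}(\tilde{C}_{\alpha}))=V_{n-1}(r(d_{\alpha}))$. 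This is the one genuine computation, and it is cleanest to carry it out through the ideal-endpoint map $\xi:\tilde{S}\to\mathcal{H}^{+}$, $p\mapsto\gamma_{p}(+\infty)$, which identifies $\tilde{S}$ with the open hemisphere of $\partial\mathbb{H}^{n}$ on the outward side and under which $\mathrm{sh}(\tilde{C}_{\alpha})$ corresponds to the ideal disk $D_{\alpha}$ bounded by $\partial\tilde{C}_{\alpha}$ on the far side; a direct hyperbolic-trigonometry computation, after normalising $\tilde{S}$ and $\tilde{C}_{\alpha}$, then produces the radius $r(d_{\alpha})$.

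Third, I would exploit the nesting structure. Since the $\tilde{C}_{\alpha}$ are pairwise disjoint, their ideal boundary spheres are pairwise unlinked, so the disks $D_{\alpha}$ — equivalently the shadows $\mathrm{sh}(\tilde{C}_{\alpha})$ — are pairwise disjoint or nested. As $\gamma_{p}$ meets $\tilde{C}_{\alpha}$ iff $\xi(p)\in D_{\alpha}$, the crossing number $N(p)$ equals the nesting depth of $\xi(p)$, i.e. the number of shadows containing $p$. The nesting then yields, modulo a null set, the decomposition
\[
\{p\in\tilde{S}: N(p)\ge k+1\}=\bigsqcup_{\alpha}\mathrm{sh}(\tilde{C}_{\alpha}),
\]
the disjoint union being over those $\tilde{C}_{\alpha}$ whose shadow is strictly contained in exactly $k$ other shadows; these are precisely the hyperplanes separated from $\tilde{S}$ by exactly $k$ members of $\tilde{\mathcal{C}}$, hence the lifts of the common orthogonals crossing $\mathcal{C}$ exactly $k$ times, which are in $\Gamma$-equivariant bijection with the classes recorded by $\mathcal{O}_{k}(M^{n};S,\mathcal{C})$. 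Taking $m_{h}$-measures, projecting to a fundamental domain for $\Gamma$ on $\tilde{S}$, and noting $F_{k}=\{N\le k\}$, gives $Vol_{n-1}(S)=m_{h}(F_{k})+\sum_{i}V_{n-1}(r(d_{i}))$.

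I expect the main obstacle to be the measure-theoretic justification rather than the geometry. One must show that the exceptional set of $p$ whose ray is tangent to some $\tilde{C}_{\alpha}$, meets a hyperplane non-transversally, accumulates infinitely many crossings, or limits into $\Lambda(\Gamma)$ has $m_{h}$-measure zero, so that the displayed partition and the subsequent summation hold almost everywhere; this is exactly where a hypothesis on the size of the limit set (here, one-dimensional measure zero) enters, and it also underlies the convergence of the series $\sum_{i}V_{n-1}(r(d_{i}))$. A secondary bookkeeping issue is to match, carefully and $\Gamma$-equivariantly, ``nesting depth $k+1$ of a shadow'' with ``the common orthogonal crosses $\mathcal{C}$ exactly $k$ times'' and with the enumeration of nontrivial relative free homotopy classes defining $\mathcal{O}_{k}$, paying attention to the degenerate situation $S\subset\mathcal{C}$ and the chosen orientation of the normal.
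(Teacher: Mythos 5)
You should know at the outset that the paper itself contains no proof of this statement: Theorem \ref{Original Basmajian theorem} is quoted verbatim from Basmajian \cite{Basmajian1}, and the paper only applies it (in Proposition \ref{Basmajian identity theorem}). Measured against Basmajian's original argument, the geometric core of your outline is a faithful reconstruction of it: the shadow of a hyperplane at distance $d$ from $\tilde{S}$ is an $(n-1)$-ball of radius $r(d)=\log\coth(d/2)$ centred at the foot of the common perpendicular; disjointness of the hyperplanes makes the ideal disks, hence the shadows, pairwise nested or disjoint; the set of points whose ray crosses at least $k+1$ times is exactly the disjoint union of the open shadows of the hyperplanes separated from $\tilde{S}$ by exactly $k$ members of $\tilde{\mathcal{C}}$; and these shadows project to disjoint embedded disks in $S$ indexed by $\mathcal{O}_{k}(M^{n};S,\mathcal{C})$ (each projected disk is embedded because a deck transformation preserving both $\tilde{S}$ and a given $\tilde{C}_{\alpha}$ fixes their common perpendicular, hence is trivial). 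That is precisely how the identity is obtained.

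The genuine flaw is in your final paragraph, where you locate the "main obstacle". The hypothesis that the limit set has $1$-dimensional measure zero is not a hypothesis of this theorem and cannot enter its proof: the whole point of retaining the term $m_{h}(F_{k})$ on the right-hand side is that rays which never terminate orthogonally on $\mathcal{C}$ --- in particular all rays whose endpoints lie in $\Lambda(\Gamma)$ --- need not form a null set; they are simply collected into $F_{k}$. That hypothesis is used only in Proposition \ref{Basmajian identity theorem}, where it is combined with this theorem (for $k=0$) and with the fact that the relevant projection preserves null sets (Proposition 3.3 of \cite{Basmajian1}) to conclude $m_{h}(F_{0})=0$. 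Similarly, the convergence of $\sum_{i}V_{n-1}(r(d_{i}))$ needs no hypothesis: the disks embed disjointly in $S$, so the partial sums are bounded by $Vol_{n-1}(S)$. The one issue your plan does need but misdiagnoses as measure-theoretic is the discreteness of the crossing times along each ray: if lifts of $\mathcal{C}$ could accumulate along a compact segment of a normal ray, the "first $k+1$ crossings" would be undefined on a set of \emph{positive} measure (shadows of limit leaves have positive radius), and the identity would genuinely fail --- e.g.\ for a nested sequence of closed geodesics accumulating onto a lamination, every relative class has infinite crossing number, so $\mathcal{O}_{0}=\emptyset$, while a positive-measure set of rays crosses infinitely often. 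This is excluded by reading "disjoint set of embedded hypersurfaces" as a locally finite (closed) union, which is how the embeddedness hypothesis actually works in the proof, not by any null-set or limit-set argument; in the paper's application it is automatic, since distinct boundary geodesics of a complete hyperbolic surface have disjoint half-collars and cannot accumulate.
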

      Applying Theorem \ref{Original Basmajian theorem} and the method introduced by Basmajian for the proof of Corollary 1.2 in \cite{Basmajian1}, we have the following proposition.

       \begin{proposition}
       \label{Basmajian identity theorem}
     Let $X$ be a complete bordered hyperbolic surface of infinite type with the limit set $\Lambda(\Gamma_{X})$ of 1-dimensional measure zero. Then for any $\beta_{j}\in\mathcal{B}(X)$, we have
      \begin{equation}\label{Basmajian identity}
        \ell_{\beta_{j}}(X)=\sum_{i=1}^{\infty} 2\log \coth(\frac{d_{i}^{j}(X)}{2}),
        \end{equation}
      where
      $\{d_{i}^{j}(X)\}^{\infty}_{i=1}$ denotes the 0-th orthogonal spectrum $\mathcal{O}_{0}(X;\beta_{j}, \mathcal{B}(X))$ of $X$ related to $\beta_{j}$ and $\mathcal{B}(X)$.
      \end{proposition}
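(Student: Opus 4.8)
The plan is to apply the Orthogonal Spectrum Theorem (Theorem~\ref{Original Basmajian theorem}) with $n=2$, taking the totally geodesic hypersurface $S$ to be the boundary geodesic $\beta_{j}$, the collection $\mathcal{C}$ to be $\mathcal{B}(X)$, and $k=0$. First I would check the hypotheses. Since the boundary components of $X$ are pairwise disjoint simple closed geodesics, their lifts to $\mathbb{H}^{2}$ are disjoint complete geodesics sharing no ideal endpoint (two distinct hyperbolic axes in a discrete group cannot share exactly one endpoint); consequently no nontrivial relative free homotopy class from $\beta_{j}$ to $\mathcal{B}(X)$ is asymptotic, so each such class has a unique common perpendicular of positive length and the zero-length condition of Theorem~\ref{Original Basmajian theorem} is satisfied. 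With $k=0$, the spectrum $\{d_{i}^{j}(X)\}$ is then exactly the nondecreasing sequence of lengths of orthogeodesics that issue perpendicularly from $\beta_{j}$ into the convex core and first meet $\mathcal{B}(X)$ perpendicularly without crossing $\mathcal{B}(X)$ in between.

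With these choices the left-hand side of the identity in Theorem~\ref{Original Basmajian theorem} is $Vol_{1}(\beta_{j})=\ell_{\beta_{j}}(X)$. Since the $1$-dimensional ball of radius $r$ has length $V_{1}(r)=2r$, each summand becomes $V_{1}(r(d_{i}^{j}))=2\,r(d_{i}^{j})=2\log\coth(d_{i}^{j}(X)/2)$, which already matches the right-hand side of the asserted formula. Hence the whole proposition reduces to showing that the remaining term vanishes, that is $m_{h}(F_{0})=0$, where $F_{0}\subseteq\beta_{j}$ is the set of points whose inward normal ray never meets $\mathcal{B}(X)$.

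To control $F_{0}$ I would pass to $\mathbb{H}^{2}$, fix a lift $\tilde{\beta}_{j}$ of $\beta_{j}$ together with a fundamental segment $I\subset\tilde{\beta}_{j}$ of length $\ell_{\beta_{j}}(X)$ for the stabilizer of $\tilde{\beta}_{j}$, and consider the endpoint map $e$ sending $p\in\tilde{\beta}_{j}$ to the ideal endpoint $e(p)\in\widehat{\mathbb{R}}$ of the inward normal geodesic ray at $p$. Parametrizing $\tilde{\beta}_{j}$ by arc length, a direct model computation (e.g. taking $\tilde{\beta}_{j}$ to be the imaginary axis) shows that $e$ is a real-analytic diffeomorphism onto the open boundary arc on the chosen side, with smooth nonvanishing Jacobian; in particular $e$ carries null sets to null sets in both directions. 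The geometric point is that the lifts of the boundary geodesics are exactly the flat pieces of $\partial C(\Lambda(\Gamma_{X}))$, so a normal ray that never crosses $\mathcal{B}(X)$ lifts to a ray that stays inside the convex hull $C(\Lambda(\Gamma_{X}))$ for all time, and therefore converges to a limit point. This gives the inclusion $e(F_{0}\cap I)\subseteq\Lambda(\Gamma_{X})$.

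Finally I would invoke the hypothesis: because $\Lambda(\Gamma_{X})$ has $1$-dimensional Lebesgue measure zero, so does $e(F_{0}\cap I)$, and the diffeomorphism property of $e$ forces $F_{0}\cap I$ to have vanishing arc-length measure, so $m_{h}(F_{0})=0$. Substituting back yields $\ell_{\beta_{j}}(X)=\sum_{i=1}^{\infty}2\log\coth(d_{i}^{j}(X)/2)$. I expect the measure-zero step to be the main obstacle: one must argue precisely that escaping normal rays correspond to endpoints in the limit set and that the endpoint map is absolutely continuous. In the convex-cocompact finite-type case this step is automatic, since the limit set is then a measure-zero Cantor set; in the infinite-type setting the limit set can carry positive measure, and it is exactly the stated hypothesis that excludes this and makes the identity hold.
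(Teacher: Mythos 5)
Your proposal is correct and follows essentially the same route as the paper: apply Theorem \ref{Original Basmajian theorem} with $S=\beta_{j}$, $\mathcal{C}=\mathcal{B}(X)$, $k=0$, and then kill the term $m_{h}(F_{0})$ by observing that the inward normal rays which never meet $\partial X$ have ideal endpoints in $\Lambda(\Gamma_{X})$ and that the correspondence between points of $\widetilde{\beta_{j}}$ and these endpoints preserves null sets. The only cosmetic difference is that you justify this last measure-theoretic step by an explicit model computation for the endpoint map, whereas the paper uses the inverse map (orthogonal projection from the circle at infinity onto $\widetilde{\beta_{j}}$) and cites Basmajian's Proposition 3.3 for the null-set preservation.
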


        \begin{proof}
        As in Theorem \ref{Original Basmajian theorem}, we let $M^{n}=X$, $\mathcal{C}=\mathcal{B}(X)=\{\beta_{1}, \beta_{2}, ... , \beta_{k}, ... \}$, $S=\beta_{j}$. Consider the orthogonal spectrum $\mathcal{O}_{0}(X;\beta_{j}, \mathcal{B}(X))=\{d_{i}^{j}(X)\}^{\infty}_{i=1}$ and it follows from Theorem \ref{Original Basmajian theorem} that
        \begin{equation*}
        \ell_{\beta_{j}}(X)=m_{h}(F_{0}^{j})+\sum_{i=1}^{\infty} 2\log \coth(\frac{d_{i}^{j}(X)}{2}),
        \end{equation*}
        where $F_{0}^{j}$ is the subset of $\beta_{j}$ consisting of the points from which the oriented geodesics starting perpendicularly never hit $\partial X$.

         Denote by $\mathcal{G}_{j}$ the set of all the complete geodesics which start perpendicularly from $\beta_{j}$ and never hit $\partial X$. It is not hard to see that for any geodesic $g\in \mathcal{G}_{j}$, the endpoint at infinity of a lift of $g$ must lie on the limit set of the Fuchsian group $\Gamma_{X}$. Fix a connected component $\widetilde{\beta_{j}}$ of a lift of $\beta_{j}$ and denote by $V_{j}$ the set of the endpoints at infinity of the lifts starting from $\widetilde{\beta_{j}}$ of all the geodesics in $\mathcal{G}_{j}$. It is clear that $V_{j}\subset \Lambda(\Gamma_{X})$.

          Observe that the endpoints of $\widetilde{\beta_{j}}$ divides the circle at infinity $S_{\infty}^{1}$ into two disjoint open components. We endow $S_{\infty}^{1}$ with 1-dimensional Lebesgue measure and let $\mathcal{R}^{+}$ be the open component for which the normal to $\widetilde{\beta_{j}}$ points. Consider the map $p_{j}: \mathcal{R}^{+} \rightarrow \beta_{j}$ given by orthogonal projection to $\widetilde{\beta_{j}}$ followed by the covering map into the quotient surface $X$. Then $F_{0}^{j}$ is exactly $p_{j}(V_{j})$. By the assumption that the limit set $\Lambda(\Gamma_{X})$ has $1$-dimensional measure zero and by the fact that $p_{j}$ preserves sets of measure zero (see Proposition 3.3 in \cite{Basmajian1}), we derive that $m_{h}(F_{0}^{j})=m_{h}(p_{j}(V_{j}))=0$.
       Hence,
          \begin{equation*}
        \ell_{\beta_{j}}(X)=\sum_{i=1}^{\infty} 2\log \coth(\frac{d_{i}^{j}(X)}{2}).
        \end{equation*}
       \end{proof}

         \subsection{Generalized McShane identity for complete bordered hyperbolic surfaces of infinite type.}
                 The  generalized McShane identity for bordered hyperbolic surfaces of finite type is given by Mirzakhani \cite{Mirzakhani}. To generalize it to the case of a complete hyperbolic surface $X$ of infinite type with boundary, we apply the method given by Bridgeman and Tan \cite{Bridgeman2}. The way is to consider the boundary flows on the surface $X$.

        Indeed, let $T_{1}(X)$ be the unit tangent bundle of $X$ and $\pi:T_{1}(X)\rightarrow X$ be the projective map. Fix a boundary component $\beta_{1}$ of $X$ and denote by $W$ the subset of $T_{1}(X)$ consisting of the vectors with basepoints on $\beta_{1}$ which are perpendicular to $\beta_{1}$ and point to the interior of $X$. It is obvious that $\pi$ is a bijection from $W$ to $\beta_{1}$. We identify $W$ with $\beta_{1}$ under $\pi$ and define the measure $\mu$ on $W$ to be the pull back of the 1-dimensional Lebesgue measure on $\beta_{1}$ under $\pi$. In particular, $\mu(W)=\ell_{\beta_{1}}(X)$. Then we consider the geodesic $g_{v}$ starting at $p=\pi(v)\in\beta_{1}$ obtained by exponentiating $v$, where $g_{v}$ is assumed to stop when it hits itself or the boundary $\partial X$.

        Let $Z\subset W$ be the set of vectors in which $g_{v}$ starts has infinite length. It is not hard to see that for every $v\in W\setminus Z$, $g_{v}$ is a geodesic arc contained in a unique generalized hyperbolic pair of pants embedded in $X$ bounded by $\beta_{1}$ and a pair of simple closed curves $\gamma_{1}$ and $\gamma_{2}$ (where either $\gamma_{1}$, $\gamma_{2}$ are both interior simple closed geodesics, or exactly one of them is an interior simple closed geodesic while the other is a geodesic boundary component or a cusp distinct from $\beta_{1}$). Denote by $\mathcal{P}$ the set of all such pairs of pants embedded in $X$. For each $P\in\mathcal{P}$, let $X_{P}=\{v\in W\setminus Z: g_{v}\subset P\}$, then $W=Z\cup(\cup_{P\in\mathcal{P}}X_{P})$.
        Hence, $\ell_{\beta_{1}}(X)=\sum\limits_{P\in\mathcal{P}}\mu(X_{P})+\mu(Z)$.

        If $\gamma_{1}$ and $\gamma_{2}$ are both interior simple closed geodesics, it can be computed by elementary hyperbolic geometry that $\mu(X_{P})=\mathcal{D}(\ell_{\beta_{1}}(X), \ell_{\gamma_{1}}(X), \ell_{\gamma_{2}}(X))$. Otherwise, assume that $\gamma_{1}$ is a geodesic boundary component (may be a cusp) and $\gamma_{2}$ is an interior simple closed geodesic. It can be computed that $\mu(X_{P})=\mathcal{R}(\ell_{\beta_{1}}(X), \ell_{\gamma_{1}}(X), \ell_{\gamma_{2}}(X))$. Here the functions  $\mathcal{D}$ and $ \mathcal{R}$ are respectively defined by
      \begin{equation*}
       \mathcal{D}(x_{1},x_{2},x_{3}) = 2 \log \left(\frac{e^{\frac{x_{1}}{2}}+e^{\frac{x_{2}+x_{3}}{2}}}
      {e^{\frac{-x_{1}}{2}}+e^{\frac{x_{2}+x_{3}}{2}}}\right),
      \end{equation*}
      \begin{equation*}
      \mathcal{R}(x_{1},x_{2},x_{3}) = x_{1}-\log\left(\frac{\cosh\frac{x_{2}}{2}+
      \cosh\frac{x_{1}+x_{3}}{2}} {\cosh\frac{x_{2}}{2}+\cosh\frac{x_{1}-x_{3}}{2}}\right).
      \end{equation*}

        The difficulty is how to ensure that $\mu(Z)=0$. However, if the limit set of the Fuchsian group $\Gamma_{X}$ has 1-dimensional measure zero, it is true that $\mu(Z)=0$. This proof is similar to the proof for $m_{h}(F_{0}^{j})=0$ in Proposition
        \ref{Basmajian identity theorem}.

       Therefore, the generalized McShance identity still holds for $X$ if $\Gamma_{X}$ has 1-dimensional measure zero.
       Then we have the following proposition.

         \begin{proposition}
       \label{McShane identity theorem}
      Let $X$ be a complete bordered hyperbolic surface of infinite type with the limit set $\Lambda(\Gamma_{X})$ of 1-dimensional measure zero. Let $\beta_{1}$ be a boundary component of $X_{0}$ with $\ell_{\beta_{1}}(X)>0$. Then we have
       \begin{equation}\label{McShane identity}
      \begin{split}
      \sum_{\{\gamma_{1},\gamma_{2}\}\in \mathcal{F}_{1}} \mathcal{D}(L_{1},\ell_{\gamma_{1}},\ell_{\gamma_{2}})+
      \sum_{i=2}^{\infty}\sum_{\gamma\in\mathcal{F}_{1,i}}
      \mathcal{R}(L_{1},L_{i},\ell_{\gamma})=L_{1}.
      \end{split}
      \end{equation}
      Here $L_{i}=\ell_{\beta_{i}}(X), \ell_{\gamma_{i}}=\ell_{\gamma_{i}}(X)$ and $\mathcal{B}(X)=\{\beta_{1}, \beta_{2}, ... , \beta_{k}, ... \}$. In particular, we include the cusps as geodesic boundary components of length zero in $\mathcal{B}(X)$.
      $\mathcal{F}_{1}$ denotes the set of all the unordered pairs of isotopy classes of interior simple closed curves which bound a pair of pants with $\beta_{1}$. $\mathcal{F}_{1,i}$ denotes the set of all the isotopy classes of interior simple closed curves which bound a pair of pants with $\beta_{1}$ and $\beta_{i}$.
       \end{proposition}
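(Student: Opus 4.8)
The plan is to carry the boundary-flow decomposition set up above to its conclusion, reducing the identity to the single measure-theoretic fact that $\mu(Z)=0$. Recall $W$ has been identified with $\beta_{1}$ so that $\mu(W)=\ell_{\beta_{1}}(X)=L_{1}$, and that $W=Z\sqcup\bigsqcup_{P\in\mathcal{P}}X_{P}$ with $X_{P}=\{v\in W\setminus Z:g_{v}\subset P\}$. Since $X$ admits a countable pair of pants decomposition, $\mathcal{P}$ is countable; granting that $Z$ and each $X_{P}$ are measurable, countable additivity gives
\begin{equation*}
L_{1}=\mu(W)=\mu(Z)+\sum_{P\in\mathcal{P}}\mu(X_{P}).
\end{equation*}

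First I would substitute the two elementary hyperbolic-geometry computations recorded above: when the two boundary curves of $P$ other than $\beta_{1}$ are interior simple closed geodesics $\gamma_{1},\gamma_{2}$ one has $\mu(X_{P})=\mathcal{D}(L_{1},\ell_{\gamma_{1}},\ell_{\gamma_{2}})$, and when one of them is a boundary component (or cusp) $\beta_{i}$ and the other an interior simple closed geodesic $\gamma$ one has $\mu(X_{P})=\mathcal{R}(L_{1},L_{i},\ell_{\gamma})$. Each $\{\gamma_{1},\gamma_{2}\}\in\mathcal{F}_{1}$ and each $\gamma\in\mathcal{F}_{1,i}$ determines exactly one such embedded $P$, so grouping $\mathcal{P}$ by these two types is merely a reindexing; as every term is nonnegative there is no conditional-convergence issue, and the sum rearranges into
\begin{equation*}
\sum_{\{\gamma_{1},\gamma_{2}\}\in\mathcal{F}_{1}}\mathcal{D}(L_{1},\ell_{\gamma_{1}},\ell_{\gamma_{2}})+\sum_{i=2}^{\infty}\sum_{\gamma\in\mathcal{F}_{1,i}}\mathcal{R}(L_{1},L_{i},\ell_{\gamma}).
\end{equation*}

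The crux---and the step I expect to be the main obstacle---is $\mu(Z)=0$, which is precisely where the hypothesis that $\Lambda(\Gamma_{X})$ has $1$-dimensional measure zero is used, and which mirrors the proof that $m_{h}(F_{0}^{j})=0$ in Proposition \ref{Basmajian identity theorem}. Here I would argue that a vector $v\in Z$ is one whose perpendicular geodesic ray $g_{v}$ has infinite length, hence neither closes up nor meets $\partial X$ and so remains in the convex core for all time. Lifting to a fixed component $\widetilde{\beta_{1}}$ of the preimage of $\beta_{1}$ in $\mathbb{H}^{2}$ and following such a ray to its endpoint on $\widehat{\mathbb{R}}$, that endpoint must lie in $\Lambda(\Gamma_{X})$: an endpoint in the set of discontinuity $\Omega(\Gamma_{X})$ would force the ray to exit $C(\Lambda(\Gamma_{X}))$, i.e. to meet $\partial X$, contradicting infinite length. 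Thus the endpoints attached to $Z$ form a subset of $\Lambda(\Gamma_{X})$, of $1$-dimensional Lebesgue measure zero, and since orthogonal projection to $\widetilde{\beta_{1}}$ followed by the covering map preserves null sets (Proposition 3.3 of \cite{Basmajian1}, as in Proposition \ref{Basmajian identity theorem}), we obtain $\mu(Z)=0$.

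Combining the three displays yields $L_{1}=\sum_{P}\mu(X_{P})$, which is the claimed identity. Two minor points remain to be dispatched in passing: the measurability of $Z$ and of each $X_{P}$, which holds because membership is governed by a first-hitting (Borel) condition for the normal geodesic flow; and the uniform treatment of cusps as boundary components of length zero in $\mathcal{B}(X)$, so that the associated pairs of pants contribute $\mathcal{R}(L_{1},0,\ell_{\gamma})$ on the same footing as genuine boundary geodesics.
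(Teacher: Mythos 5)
Your proposal is correct and follows essentially the same route as the paper: the boundary-flow decomposition $W=Z\sqcup\bigsqcup_{P\in\mathcal{P}}X_{P}$ with $\mu(W)=L_{1}$, the substitution $\mu(X_{P})=\mathcal{D}$ or $\mathcal{R}$ according to the type of pair of pants, and the reduction of everything to $\mu(Z)=0$, proved exactly as the paper indicates by lifting to $\widetilde{\beta_{1}}$, observing that the endpoints at infinity of rays that never meet $\partial X$ lie in $\Lambda(\Gamma_{X})$, and invoking the null-set-preserving property of the projection (Proposition 3.3 of Basmajian), mirroring the proof of the Basmajian identity. Your added remarks on measurability and on treating cusps as length-zero boundary components are consistent with, and slightly more explicit than, the paper's sketch.
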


  Recall that a class $\mathcal{O}$ of Fuchsian groups is \emph{quasiconformally invariant} \cite{Matsuzaki1} if it satisfies that for any Fuchsian group $\Gamma\in\mathcal{O}$, if there is a quasiconformal homeomorphism $f$ of $\mathbb{H}^{2}$ such that $\Gamma'=f\Gamma f^{-1}$ is Fuchsian, then $\Gamma'$ belongs to $\mathcal{O}$.

  \begin{remark}\label{Matsuzaki}
  It was remarked in \cite{Matsuzaki1} by Matsuzaki that the class of Fuchsian groups whose limit set has vanishing 1-dimensional measure is not quasiconformally invariant (see Example 2 in \cite{Taniguchi} and Theorem 3 in  \cite{BA}). Thus it's possible that the Basmajian identity and the generalized McShane identity fail to hold for the Teichm\"uller space of a hyperbolic surface of infinite type with the limit set of 1-dimensional measure zero. To overcome this difficulty, we consider the geometric condition $(\star)$ in the next section.
   \end{remark}

   \section{A geometric condition}

   In this section, we aim to show that the Basmajian identity and the generalized McShane identity hold for $\mathcal{T}(X_{0})$ provided that $X_0$ satisfies the geometric condition $(\star)$. The key is to show the 1-dimensional measure of the limit set $\Lambda(\Gamma_{X})$ of each $X\in\mathcal{T}(X_{0})$ is zero.

   First we discuss some properties of the geometric condition $(\star)$. 
    To state and verify the related results, we fix some terminology and notations first.

    We say that a 
    map
     $f:(X_{1}, d_{1})\rightarrow (X_{2}, d_{2})$ between two metric spaces is \emph{bi-Lipschitz} if there exists a real number $L\geq 1$ satisfying
    \begin{equation*}
    \frac{1}{L} d_{1}(x,y)\leq d_{2}(x,y) \leq L d_{1}(x,y)
    \end{equation*}
    for any $x, y \in X_{1}$.
    The real number $L$ is called a \emph{bi-Lipschitz constant of f}. Two metric spaces are said to be \emph{bi-Lipschitz equivalent} if there exists a bi-Lipschitz homeomorphism between them.

    Let $X_{0}$ be a complete hyperbolic surface of infinite type with geodesic boundary.
    We denote by $\mathcal{T}_{bL}(X_{0})$ the \emph{bi-Lipschitz Teichm\"uller space} of $X_{0}$, which is the set of equivalence classes of pairs $(X,f)$, where $X$ is a complete hyperbolic surface of infinite type and $f: X_{0}\rightarrow X$ is a bi-Lipschitz homeomorphism with respect to the hyperbolic metrics which leaves each puncture and each boundary component setwise fixed. Here two pairs $(X_{1},f_{1})$ and $(X_{2},f_{2})$ are said to be \emph{equivalent} if $f_{2}\circ f_{1}^{-1}$ is homotopic to an isometry from $X_{1}$ to $X_{2}$. Denote the equivalence class of  $(X,f)$ by  $[X,f]$. It deserves to  mention that the homotopies do not necessarily fix $\partial X_{0}$ pointwise.

    In $\mathcal{T}_{bL}(X_{0})$, we consider the \emph{bi-Lipschitz metric} $d_{bL}$ (see \cite{LP}) which is defined by
    \begin{equation*}
    d_{bL}([X_{1},f_{1}], [X_{2}, f_{2}])=  \frac{1}{2} \log \inf_{g\simeq
               f_{2}\circ f_{1}^{-1}}L(g),
    \end{equation*}
    where the infimum is taken over all bi-Lipschitz homeomorphisms $g:X_{1}\rightarrow X_{2}$ homotopic to $ f_{2}\circ f_{1}^{-1}$ and $L(g)$ is the bi-Lipschitz constant of $g$.

  \begin{theorem}\label{Matsuzaki 1}
  \emph{(Matsuzaki \cite{Matsuzaki})}
  Let $\Gamma$ be a Fuchsian group acting on the upper half-plane $\mathbb{H}^{2}$. If there is a positive constant $L$ and a removable set $A$ for $\Gamma$ such that all points of $C(\Lambda(\Gamma))\setminus A$ lie within a distance $L$ of $\partial C(\Lambda(\Gamma))$, then there is a constant $\alpha \in (0,1)$ depending only on $L$ such that the Hausdorff dimension of the limit set of $\Gamma$ satisfies $\dim\Lambda(\Gamma)\leq \alpha <1$.
       \end{theorem}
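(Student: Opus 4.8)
The plan is to bound $\dim_H\Lambda(\Gamma)$ directly by showing that $\Lambda(\Gamma)$ is \emph{uniformly porous} on $\widehat{\mathbb{R}}$, with a porosity constant depending only on $L$. Recall that if a set $E\subset\widehat{\mathbb{R}}$ has the property that every interval $I$ contains a subinterval of length at least $c\,|I|$ that is disjoint from $E$, then $\dim_H E\le 1-\kappa(c)$ for some positive function $\kappa$ of $c$ alone. Thus it suffices to exhibit a constant $c=c(L)\in(0,1)$ for which $\Lambda(\Gamma)$ is $c$-porous, and the theorem will follow with $\alpha=1-\kappa(c(L))$. (Alternatively one could target the critical exponent: since $\dim_H\Lambda(\Gamma)=\delta(\Gamma)$ and $\lambda_0=\delta(1-\delta)$, a spectral gap $\lambda_0\ge\lambda(L)>0$ coming from a uniform Cheeger estimate would give $\delta\le\alpha(L)<1$; but porosity bounds the whole limit set at once and is more directly quantitative in $L$, so I would pursue it.)

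First I would set up the geometry. Fix a basepoint $o$ inside the convex core and recall that, in dimension two, $\partial C(\Lambda(\Gamma))$ is a disjoint union of complete geodesics, each facing exactly one gap, i.e. one component of $\Omega(\Gamma)$; these gaps are precisely the shadows from $o$ of the boundary geodesics. Given $\xi\in\Lambda(\Gamma)$, let $\rho_\xi$ be the unit-speed geodesic ray from $o$ to $\xi$. If the depth-$t$ point $p=\rho_\xi(t)$ lies in $C(\Lambda(\Gamma))\setminus A$, the hypothesis furnishes a boundary geodesic $\ell$ with $d(p,\ell)\le L$, and an elementary shadow computation shows that the shadow of the ball $B(p,L)$ is an interval about $\xi$ of visual size $\asymp e^{-t}$, inside which the gap faced by $\ell$ has visual size $\gtrsim e^{-(t+L)}$. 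Hence at every scale $e^{-t}$ at which $\rho_\xi$ avoids $A$ there is a gap of proportional size adjacent to $\xi$, which is porosity at that scale with constant $\asymp e^{-L}$.

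The crux is therefore to control the scales at which $\rho_\xi$ runs deep inside a removable component $A_n$, and here the restriction to the three admissible shapes is essential. If $A_n$ is the $r$-neighbourhood of a geodesic $g$, a ray can stay in it only while travelling almost along $g$, hence toward an endpoint of $g$; invariance of $A$ forces that endpoint to be a hyperbolic fixed point, near which the limit set is self-similar and \emph{is} porous at every small scale, so the gap estimate is recovered. If $A_n$ is a horodisk tangent at $\eta$, then $\eta$ is a parabolic fixed point; uniform porosity genuinely fails at $\eta$ (gaps shrink like $s^2$ at scale $s$), but the limit set near $\eta$ is the parabolic orbit of the remaining limit set and hence has local dimension no larger than the bulk, so these countably many points cannot raise $\dim_H$. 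The serious difficulty I foresee is the hyperbolic disks: being compact, each is met by $\rho_\xi$ in a depth-band no longer than its diameter, but the radii are not assumed bounded, so a single large disk can a priori block the porosity mechanism over a long range of scales inside its (small) shadow. The main work of the proof is to rule this out, namely to show that the shadow of a large removable disk still meets $\partial C(\Lambda(\Gamma))$ often enough and that the $\Gamma$-invariant family of disk shadows is itself porously distributed, so that no interval is left gap-free across more than a bounded (in terms of $L$) band of scales. Granting this, the contributions of the three removable types combine with the generic estimate to yield a single porosity constant $c(L)$, whence $\dim_H\Lambda(\Gamma)\le 1-\kappa(c(L))=:\alpha<1$, with $\alpha$ depending only on $L$.
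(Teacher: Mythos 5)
First, a point of reference: the paper itself offers no proof of this statement --- it is imported wholesale from \cite[Theorem 1]{Matsuzaki} --- so your sketch can only be judged on its own terms and against the geometry the paper actually uses. Judged that way, it has a fatal gap, located exactly where you write ``Granting this.'' The claim you grant --- that the $\Gamma$-invariant family of removable disks is ``porously distributed, so that no interval is left gap-free across more than a bounded (in terms of $L$) band of scales'' --- is false, and the paper's own Example \ref{ex: hyperbolic disks} is a counterexample. There the removable set consists of hyperbolic disks of radii $\beta_n \to \infty$, tangent to the convex core boundary. Take a limit point $\xi$ whose ray $\rho_\xi$ passes within bounded distance of the center of a lifted disk of radius $\beta$ centered at depth $d$ (such $\xi$ exist: rays crossing the chain of pieces through consecutive disk centers stay in the core). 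A gap of $\Omega(\Gamma)$ of length at least $c\,s$ inside $[\xi-s,\xi+s]$, $s=e^{-t}$, is faced by a geodesic of $\partial C(\Lambda(\Gamma))$ passing within distance $\log(1/c)+O(1)$ of $\rho_\xi(t)$; but every boundary geodesic of the hull lies outside the open disk, hence at distance at least $\rho$ from $\rho_\xi(t)$ when that point sits at depth $\rho$ inside the disk. So at the central scale $t\approx d$ the best available porosity constant is of order $e^{-\beta}$, which is not bounded below by any $c(L)$. Uniform porosity of $\Lambda(\Gamma)$ therefore genuinely fails under hypothesis $(\star)$; even mean porosity (porosity at a positive proportion of scales, which would still bound the dimension) fails, since in that example $\beta_{n+1}-\beta_n$ is bounded while $\beta_n\to\infty$, so a ray threading the disk centers spends an asymptotically full proportion of its time inside $A$. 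Thus the theorem cannot be reduced to the statement you deferred: that statement is false, not merely unproved.

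What actually makes the three shapes ``removable'' is a covering mechanism, not a porosity mechanism: crossing a disk of radius $\beta$ at depth $d$ spreads its shadow, an interval of length $\asymp e^{-(d-\beta)}$, into $\asymp e^{\beta}$ exit windows of lengths $\asymp e^{-(d+\beta)}$, and $e^{\beta}\bigl(e^{-(d+\beta)}\bigr)^{\alpha}\leq\bigl(e^{-(d-\beta)}\bigr)^{\alpha}$ precisely when $\alpha\geq 1/2$, with analogous $\alpha$-efficiency for horoball and tube crossings; combining this with the definite gaps produced at depths where the ray lies outside $A$ (your ``good case'' computation, which is correct and gives the $L$-dependence) yields a nested covering proof of $\dim\Lambda(\Gamma)\leq\alpha(L)<1$ without ever producing a proportional hole at every scale. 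This is why the paper's remark after the theorem says the disks handle the \emph{thick} parts: they are allowed to be huge, and the estimate survives because crossings of them are dimension-efficient, not because they are porous. Two secondary gaps in your sketch point the same way: your handling of tubes and horodisks rests on the assertions that the core geodesic's endpoint must be a hyperbolic fixed point and the horodisk's tangency point must be parabolic; neither follows from the hypotheses, because $\Gamma$-invariance of $A=\cup_n A_n$ only forces $\Gamma$ to permute the components $A_n$, not to stabilize any individual one.
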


       \begin{remark}
      The condition in Theorem \ref{Matsuzaki 1} is exactly the geometric condition  ($\star$) in Definition \ref{definition of a removable set}. 
      In the estimate of the Hausdorff dimension $\dim\Lambda(\Gamma)$ in Theorem \ref{Matsuzaki 1} (see \cite[Theorem 1]{Matsuzaki}), Matsuzaki aimed to show that only the depth of the convex core $C(\Lambda(\Gamma))/\Gamma$ without the removable set is important. For a removable set $A$ for $\Gamma$, the components as horodisks and neighbourhoods of complete geodesics in $\mathbb{H}^{2}$ are used to deal with the thin parts of $C(\Lambda(\Gamma))/\Gamma$, while the components as hyperbolic disks are used to deal with the thick parts of $C(\Lambda(\Gamma))/\Gamma$. We give the corresponding examples in Section 5, see Example \ref{ex: horodisks}, Example \ref{ex: neighbourhoods of a geodesic} and Example  \ref{ex: hyperbolic disks} respectively.
       \end{remark}

       \begin{definition}\label{definition of a weakly removable set}
        For a Fuchsian group $\Gamma$, we say that a disjoint union of regions $A=\cup_{n\in \mathbb{N}}A_{n}$ in $\mathbb{H}^{2}$ is \emph{weakly removable for $\Gamma$} (see \cite{Matsuzaki}) if it satisfies the following conditions:
        \begin{enumerate}[(1)]
         \item Each $A_{n}$ is an open set in $\mathbb{H}^{2}$ whose euclidean closure intersects $\widehat{\mathbb{R}}$ with the set of 1-dimensional measure zero.
         \item The set $A$ is invariant under the action of $\Gamma$.
       \end{enumerate}
       \end{definition}

       \begin{definition}\label{Def of weak geometric condition}
       We say that $X_{0}$ satisfies \emph{the weak geometric condition $(\diamond)$} if there is a positive constant $L$ and a weakly removable set $A$ for $\Gamma_{X_{0}}$ such that all points of $C(\Lambda(\Gamma_{X_0}))\setminus A$ lie within a distance $L$ of $\partial C(\Lambda(\Gamma_{X_0}))$.
       \end{definition}

        \begin{remark}\label{the weak geometric condition theorem}
        It was proved in \cite[Theorem 5]{Matsuzaki} that if $X_0$ satisfies the weak geometric condition $(\diamond)$, then the 1-dimensional measure of $\Lambda(\Gamma_{X_0})$ is zero. In other words, if the conclusion in Theorem \ref{Matsuzaki 1} is weaken to be that the 1-dimensional measure of $\Lambda(\Gamma)$ is zero, it suffices to consider a weakly removable set for $\Gamma$ instead of a removable set for $\Gamma$.



       \end{remark}

        \begin{theorem}\emph{(Matsuzaki \cite{Matsuzaki})} \label{Matsuzaki 2}Let $N_{\Gamma}$ be a hyperbolic surface of infinite topological type and let $\{c_{n}\}_{n=1,2,...}$ be the components of the boundary of the convex core $\partial C_{\Gamma}\subset N_{\Gamma}$. If the hyperbolic lengths $\ell(c_{n})$ satisfy
        \begin{equation*}
        \sum_{n}\ell(c_{n})^{\frac{1}{2}}<\infty,
        \end{equation*}
         then the Hausdorff dimension of the limit set of $\Gamma$ is equal to $1$.
         \end{theorem}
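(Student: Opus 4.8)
The plan is to pass from the statement about Hausdorff dimension to a statement about the critical exponent of $\Gamma$, and then to the bottom of the $L^{2}$-spectrum of $N_{\Gamma}$. Write $\delta(\Gamma)$ for the critical exponent of $\Gamma$, i.e.\ the exponent of convergence of the Poincar\'e series $\sum_{g\in\Gamma}e^{-s\,d(o,go)}$. Since $\Lambda(\Gamma)\subset S^{1}_{\infty}$ we always have $\dim\Lambda(\Gamma)\le 1$, so it suffices to prove the reverse inequality $\dim\Lambda(\Gamma)\ge 1$. By the Bishop--Jones theorem, for every non-elementary Fuchsian group the radial (conical) limit set has Hausdorff dimension exactly $\delta(\Gamma)$, so $\dim\Lambda(\Gamma)\ge\delta(\Gamma)$; the whole problem thus reduces to showing $\delta(\Gamma)=1$.

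For the second reduction I would invoke the Elstrodt--Patterson--Sullivan formula relating $\delta(\Gamma)$ to the bottom $\lambda_{0}(N_{\Gamma})$ of the $L^{2}$-spectrum of $-\Delta$ on $N_{\Gamma}=\mathbb{H}^{2}/\Gamma$, namely $\lambda_{0}=\delta(1-\delta)$ when $\delta\ge\tfrac12$ (and $\lambda_{0}=\tfrac14$ otherwise). Hence $\delta(\Gamma)=1$ is equivalent to $\lambda_{0}(N_{\Gamma})=0$. Using the variational characterization $\lambda_{0}(N_{\Gamma})=\inf\{\int_{N_{\Gamma}}|\nabla f|^{2}/\int_{N_{\Gamma}}f^{2}:\ f\in C_{c}^{\infty}(N_{\Gamma})\setminus\{0\}\}$, the goal becomes the construction of a sequence of compactly supported test functions whose Rayleigh quotients tend to $0$.

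The test functions should exploit the geometry forced by the hypothesis. Since $\sum_{n}\ell(c_{n})^{1/2}<\infty$ forces $\ell(c_{n})\to 0$, all but finitely many boundary geodesics are short and, by the collar lemma, carry embedded collars of width $\approx\log(1/\ell(c_{n}))\to\infty$, beyond which lie funnels of infinite area. A model computation on a single funnel over a geodesic of length $\ell$ shows that a radial profile dropping from $1$ to $0$ costs Dirichlet energy comparable to $\ell$. I would therefore take $f_{m}$ equal to $1$ on a large connected compact piece $D_{m}$ exhausting the convex core $C_{\Gamma}$, let $f_{m}$ decay to $0$ across the funnels attached to the boundary geodesics met by $D_{m}$ (contributing energy $\lesssim\sum_{n}\ell(c_{n})<\infty$), and transition $f_{m}$ to $0$ along the auxiliary interior curves needed to truncate $D_{m}$. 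The $L^{2}$-mass is at least $\mathrm{Area}(D_{m})\to\infty$, so once the total energy stays bounded the Rayleigh quotient tends to $0$ and $\lambda_{0}(N_{\Gamma})=0$ follows.

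The main obstacle is precisely this last point. The convex core is itself of infinite type and infinite area, so a compact exhaustion $D_{m}$ cannot in general be bounded by the $c_{n}$ alone and must be cut along interior geodesics whose lengths are \emph{not} controlled by the hypothesis. The heart of the argument, and where the exponent $\tfrac12$ should enter, is to spread the potential drop of $f_{m}$ across the many thin necks rather than concentrating it on a single cut, and to estimate the resulting energy by a capacity/flow (effective-resistance) computation on the tree of collars and funnels, in which a Cauchy--Schwarz step pairing the conductances $\approx\ell(c_{n})$ of the collars against the potential drops turns the summability of $\ell(c_{n})^{1/2}$ into a uniform bound on the energy while the mass diverges. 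Carrying out this energy estimate uniformly over infinitely many collars, and organizing the exhaustion so that the interior cuts contribute negligibly, is the technical crux on which the whole proof rests.
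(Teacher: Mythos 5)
You should know at the outset that the paper itself contains no proof of this statement: Theorem 3.5 is quoted from Matsuzaki \cite{Matsuzaki} and used as a black box (only in the proof of Proposition 3.10(1)). So there is no internal argument to compare yours against, and I assess your proposal on its own terms.

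The proposal has a fatal gap, and it is not the "technical crux" you flagged but the reduction that precedes it. Bishop--Jones and Elstrodt--Patterson--Sullivan are correctly invoked, but they replace the theorem by the strictly stronger claim $\delta(\Gamma)=1$ (equivalently $\lambda_{0}(N_{\Gamma})=0$), and that claim is \emph{false} under the theorem's hypotheses. Note first that the hypothesis $\sum_{n}\ell(c_{n})^{1/2}<\infty$ is satisfied automatically whenever $\partial C_{\Gamma}$ has finitely many components. Now build $N_{\Gamma}$ from isometric hyperbolic pairs of pants, all boundary lengths equal to $1$, glued along the pattern of a rooted trivalent tree, with the one unglued boundary curve $c$ of the root capped off by a funnel. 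This is a complete hyperbolic surface of infinite topological type whose convex core is the tree of pants and whose convex core boundary is the single curve $c$, so the hypotheses hold (one can also sprinkle in infinitely many funnels over sparse curves of lengths $4^{-n}$ if one insists on infinitely many $c_{n}$; nothing below changes). But this surface has uniformly bounded geometry and its pants graph is a nonamenable tree: a union of $k$ pairs of pants has at least $k+1$ cut curves of length $1$ while its area is $2\pi k$, the funnel satisfies a linear isoperimetric inequality, and the standard discretization argument (or Brooks' amenability criterion applied to the associated regular cover of a genus-two surface) gives a positive Cheeger constant. Hence $\lambda_{0}(N_{\Gamma})\geq h^{2}/4>0$, and therefore $\delta(\Gamma)<1$ by the very Elstrodt--Patterson--Sullivan formula you use. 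So no sequence of test functions can have Rayleigh quotient tending to $0$; the energy estimate you call the crux is impossible, not merely difficult. Meanwhile the theorem is still true for this surface: from a basepoint at distance $D$ from $c$, the total visual angle of the gaps forming $S^{1}\setminus\Lambda(\Gamma)$ is at most $C\sum_{T\geq D}e^{\delta' T}e^{-T}=O\bigl(e^{-(1-\delta')D}\bigr)$ for any $\delta'\in(\delta,1)$, by orbit counting and the bound $Ce^{-d}$ on the visual angle of a half-plane at distance $d$; letting $D\to\infty$ shows $\Lambda(\Gamma)$ has positive Lebesgue measure, hence dimension $1$. The dimension here is carried by \emph{non-conical} limit points, which Bishop--Jones does not see: $\dim\Lambda\geq\delta$ is an inequality, not an equality, so "the whole problem reduces to $\delta=1$" is exactly where the logic breaks.

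Two further remarks. First, any correct proof must handle the regime $\lambda_{0}>0$, extracting the conclusion from the size of the non-conical part of $\Lambda(\Gamma)$; this is also where the exponent $\tfrac12$ genuinely enters (for instance, the equidistant curve at half the depth of the standard collar about $c_{n}$ has length $\asymp\ell(c_{n})^{1/2}$, which is what controls the visual/harmonic measure leaking out through the $n$-th funnel), rather than through a Cauchy--Schwarz step in an energy estimate. Second, even in the regime where $\lambda_{0}=0$ is true (e.g.\ Jacob's-ladder-type cores, where linear area growth makes tent functions work), your construction relies on "spreading the potential drop across many thin necks", but the hypothesis gives no control whatsoever on interior curve lengths, so the thin necks your capacity/flow argument is supposed to exploit need not exist.
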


      \begin{theorem}\emph{(Liu, Papadopoulos \cite{LP})}\label{bi-Lip and qc map}
     For every complete hyperbolic surface $X_{0}$ of infinite type, we have the set-theoretic equality
     \begin{equation*}
    \mathcal{T}(X_{0}) = \mathcal{T}_{bL}(X_{0}),
     \end{equation*}
      and there exists a constant $C$ such that for every $X$ and $Y$ in $\mathcal{T}(X_{0})$, we have
     \begin{equation}
     d_{T}(X,Y)\leq d_{bL}(X,Y)\leq Cd_{T}(X,Y).
     \end{equation}
      \end{theorem}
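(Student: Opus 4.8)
The plan is to prove the two assertions in tandem by comparing, on the level of the universal cover $\mathbb{H}^2$, the class of quasiconformal maps with the class of bi-Lipschitz (for the hyperbolic metric) maps, and then transporting the comparison down to the marked surfaces. Since both the quasiconformal and the bi-Lipschitz equivalence relations identify two markings exactly when $f_2\circ f_1^{-1}$ is homotopic to an isometry, the set-theoretic equality $\mathcal{T}(X_0)=\mathcal{T}_{bL}(X_0)$ will follow once I show that every quasiconformal marking admits a bi-Lipschitz representative in its homotopy class, and conversely; the two distance inequalities will then come from the quantitative forms of these two implications.

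First I would dispose of the inclusion $\mathcal{T}_{bL}(X_0)\subseteq\mathcal{T}(X_0)$ and the left-hand inequality $d_T\le d_{bL}$. A homeomorphism $g$ that is $L$-bi-Lipschitz for the hyperbolic metrics is differentiable almost everywhere by Rademacher's theorem, and at each point of differentiability the singular values of its derivative lie in $[1/L,L]$; hence $g$ is quasiconformal with maximal dilatation $K[g]\le L(g)^2$. Thus every competitor in the infimum defining $d_{bL}(X,Y)$ is also a competitor in the infimum defining $d_T(X,Y)$, with comparably controlled distortion, and passing to the infimum over the respective homotopy classes yields the left-hand inequality; the matching of equivalence classes gives the inclusion of the bi-Lipschitz Teichm\"uller space into the quasiconformal one.

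The substantive direction is $\mathcal{T}(X_0)\subseteq\mathcal{T}_{bL}(X_0)$ together with $d_{bL}\le C\,d_T$. Fix $X=[X_1,f_1]$ and $Y=[X_2,f_2]$ and, given $\varepsilon>0$, choose a quasiconformal $g\simeq f_2\circ f_1^{-1}$ with $\frac12\log K[g]\le d_T(X,Y)+\varepsilon$. Lifting $g$ to the upper half-plane gives a $K[g]$-quasiconformal homeomorphism $\tilde g$ of $\mathbb{H}^2$ conjugating $\Gamma_{X_1}$ to $\Gamma_{X_2}$, whose boundary values form a quasisymmetric circle homeomorphism with quasisymmetry constant depending only on $K[g]$. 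The key step is to replace $\tilde g$ by the \emph{conformally natural (Douady--Earle) extension} of these boundary values: this extension is a diffeomorphism of $\mathbb{H}^2$ that (i) depends only on the boundary map, hence is again equivariant for $\Gamma_{X_1},\Gamma_{X_2}$ and descends to a homeomorphism $h\colon X_1\to X_2$ homotopic to $g$ and respecting the peripheral structure, and (ii) is bi-Lipschitz for the hyperbolic metric with a constant $L=L(K[g])$ depending only on $K[g]$. Because the extension is determined by the boundary circle alone, these bounds are uniform and in particular independent of the (infinite-type) surface. This simultaneously places $[X_2,f_2]$ in $\mathcal{T}_{bL}(X_0)$ and gives $d_{bL}(X,Y)\le\frac12\log L(K[g])$.

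The right-hand inequality $d_{bL}\le C\,d_T$ then reduces to a quantitative comparison $\log L(K)\le C'\log K$ for the bi-Lipschitz constant of the conformally natural extension as a function of the dilatation $K$ (equivalently, of the quasisymmetry constant of the boundary map); letting $\varepsilon\to 0$ afterwards removes the approximation. I expect this quantitative estimate to be the main obstacle: one must control the bi-Lipschitz distortion of the Douady--Earle extension linearly in $\log K$, uniformly over all quasisymmetric boundary maps arising from surfaces of infinite type, and verify that the extension genuinely descends to the quotients while keeping each puncture and each boundary component setwise fixed (here the use of the \emph{reduced} Teichm\"uller space, in which homotopies need not fix $\partial X_0$ pointwise, is exactly what makes the descent unobstructed). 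The remaining points — equivariance of the extension, the homotopy of $h$ to $g$ via equality of boundary values, and the measurability of the almost-everywhere derivative in the easy direction — are routine.
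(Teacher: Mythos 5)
You should first be aware that the paper itself does not prove this theorem: it is imported from Liu--Papadopoulos (\cite{LP}, Theorem 4.3), with only the remark that the idea goes back to Thurston (\cite{Thurston-notes}, p.~268). Measured against that cited proof, your route is exactly the right one -- the easy inclusion via almost-everywhere differentiability and singular-value bounds, and the substantive inclusion via the quasisymmetric boundary values of the lift together with the Douady--Earle conformally natural extension, whose equivariance and $K$-dependent hyperbolic bi-Lipschitz bounds produce a bi-Lipschitz representative in each quasiconformal class. However, as a proof your text has a genuine gap, and it is the one you flag yourself: the estimate $\log L(K)\le C'\log K$. This is not a loose end but the entire content of the inequality $d_{bL}\le C\,d_T$. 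Conformal naturality plus compactness of normalized $K$-quasisymmetric maps only give a qualitative bound $L=L(K)$ with $L(1)=1$, which yields $d_{bL}(X,Y)\le\tfrac12\log L\big(e^{2d_T(X,Y)}\big)$, i.e.\ a uniform modulus of continuity; if $L(K)-1$ decayed like $\sqrt{K-1}$ as $K\to1$, no constant $C$ would exist. So the rate must actually be proved: (i) near $K=1$, via the smooth (real-analytic) dependence of the barycentric extension on the boundary homeomorphism near M\"obius maps, or via explicit derivative estimates for the Douady--Earle extension; and (ii) for large $K$, by subdivision: scaling a near-extremal Beltrami coefficient gives intermediate points $X=Z_0,Z_1,\dots,Z_n=Y$ with each $d_T(Z_{i-1},Z_i)$ small and $\sum_i d_T(Z_{i-1},Z_i)\le d_T(X,Y)+\varepsilon$, after which one applies (i) piecewise and the triangle inequality for $d_{bL}$. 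Neither ingredient appears in your proposal.

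Two further repairs are needed. Since every $X\in\mathcal{T}(X_0)$ has geodesic boundary, the lift of $g$ is a homeomorphism of the convex hull $C(\Lambda(\Gamma_{X_1}))$, not of all of $\mathbb{H}^2$, so the boundary-extension machinery does not apply as you state it: one must first double the surfaces and the map across the geodesic boundary (quasiconformality survives reflection across analytic arcs), run the Douady--Earle argument equivariantly for the doubled groups, and use naturality under the anti-conformal reflection to see that the extension commutes with the involution and hence descends to a boundary-preserving map of the bordered surfaces; this, not the reducedness of the Teichm\"uller space, is what makes the descent work. Finally, with the normalizations of this paper, $d_T=\tfrac12\log\inf K$ and $d_{bL}=\tfrac12\log\inf L$, your easy direction gives only $K[g]\le L(g)^2$, hence $d_T\le 2\,d_{bL}$; the left-hand inequality exactly as stated does not follow from the singular-value bound alone, so the normalization (or the statement) has to be adjusted accordingly.
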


      It is an alternative statement of Theorem 4.3 in \cite{LP}. The idea was originally introduced by Thurston (see \cite{Thurston-notes} p. 268).

       \begin{lemma}\label{the cover of a removable set}
        Let $X_{0}$ be a complete hyperbolic surface of infinite type with boundary. Let $X\in\mathcal{T}(X_0)$ and let $A=\cup_{n\in\mathbb{N}}A_n$ be a removable set for $\Gamma_X$. Then for any subsurface $\Sigma$ of $X$ which contains an essential self-intersecting closed curve that is not $\gamma^n$ for any simple closed curve $\gamma$ and $n\in\mathbb{Z}$, the projection $\pi(A)$ on $X$ of $A$ under $\Gamma_{X}$ fails to cover $\Sigma$.

       \end{lemma}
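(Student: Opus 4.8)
The plan is to argue by contradiction. Suppose that $\pi(A)$ does cover $\Sigma$. Then the given essential self-intersecting closed curve $\delta\subset\Sigma$ lies in $\pi(A)$, and I will derive a contradiction with the hypothesis that $\delta$ is not a power of a simple closed curve. Since $\delta$ is essential, its free homotopy class determines (up to conjugacy) a nontrivial \emph{hyperbolic} element $g\in\Gamma_{X}$.

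First I would lift the curve into $\mathbb{H}^{2}$. Because $A$ is invariant under $\Gamma_{X}$, one has $\pi^{-1}(\pi(A))=\Gamma_{X}\cdot A=A$. Fix a lift $\tilde p$ of a basepoint of $\delta$ and let $\tilde\delta\colon[0,1]\to\mathbb{H}^{2}$ be the lift of $\delta$ starting at $\tilde p$, so that $\tilde\delta(1)=g\,\tilde p$. As $\delta\subset\pi(A)$, the image $\tilde\delta([0,1])$ lies in $A$; being connected and meeting the pairwise disjoint open components $\{A_{n}\}$, it is contained in a single component $A_{m}$. Hence both $\tilde p$ and $g\,\tilde p$ lie in $A_{m}$, so $g(A_{m})$ is a component of $A$ meeting $A_{m}$ and therefore equals $A_{m}$; that is, $g$ lies in the stabilizer $\mathrm{Stab}_{\Gamma_{X}}(A_{m})$.

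Next I would analyze this stabilizer according to the three possibilities for $A_{m}$ in Definition \ref{definition of a removable set}, using crucially that $\Gamma_{X}$ is torsion-free. If $A_{m}$ is a hyperbolic disk, any element preserving it fixes its center and so is elliptic or trivial; torsion-freeness forces $g=1$, contradicting that $g$ is hyperbolic. If $A_{m}$ is a horodisk tangent to $\widehat{\mathbb{R}}$ at a point $\xi$, any element preserving it fixes $\xi$ and preserves the bounding horocycle, hence is parabolic or trivial, which again excludes a hyperbolic $g$. There remains the case that $A_{m}$ is an $r$-neighbourhood of a complete geodesic $g_{m}$. Then every element preserving $A_{m}$ preserves $g_{m}$ and hence its unordered pair of endpoints on $\widehat{\mathbb{R}}$; the orientation-preserving such elements either fix both endpoints (hyperbolic with axis $g_{m}$, or the identity) or interchange them (an order-two elliptic), and torsion-freeness rules out the latter. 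By discreteness, $\mathrm{Stab}_{\Gamma_{X}}(A_{m})=\langle h\rangle$ is infinite cyclic, generated by a hyperbolic element $h$ with axis $g_{m}$, and therefore $g=h^{k}$ for some $k\in\mathbb{Z}\setminus\{0\}$.

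Finally I would translate this back to $X$. Since distinct components of $A$ are disjoint, $\pi$ identifies two points of $A_{m}$ only when they differ by an element of $\mathrm{Stab}_{\Gamma_{X}}(A_{m})=\langle h\rangle$, so $\pi|_{A_{m}}$ realizes the quotient annulus $A_{m}/\langle h\rangle$ as an embedded annulus in $X$ whose core $\gamma=\pi(g_{m})$ is a simple closed geodesic. As $g=h^{k}$ shares the axis $g_{m}$, the curve $\delta$ is freely homotopic to $\gamma^{k}$, a power of a simple closed curve, contradicting the hypothesis on $\delta$; the self-intersecting assumption is used only to guarantee that $\delta$ is genuinely non-simple so that this identification is excluded. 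The step I expect to be the main obstacle is the middle one: confining a connected lift of $\delta$ to a single removable component and then pinning down the stabilizer of that component as an infinite cyclic group with the correct hyperbolic generator, where the three geometric cases and the repeated appeal to torsion-freeness of $\Gamma_{X}$ must be handled with care.
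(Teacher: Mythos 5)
Your proof is correct, but it takes a genuinely different route from the paper's. The paper's own argument is much shorter and purely about points at infinity: assuming $\pi(A)$ covers $\Sigma$, it takes a connected component $\tilde{\alpha}$ of the full preimage $\pi^{-1}(\alpha)$, which by disjointness of the $A_{n}$ lies in a single component; since $\alpha$ has essential self-crossings, two lifts with linked endpoints cross and lie in that same component, so the Euclidean closure of $\tilde{\alpha}$ meets $\widehat{\mathbb{R}}$ in at least four points, whereas by Definition \ref{definition of a removable set} the closure of any single component meets $\widehat{\mathbb{R}}$ in at most two points (none for a hyperbolic disk, one for a horodisk, two for a neighbourhood of a complete geodesic). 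That argument needs no discreteness, no torsion-freeness, and no stabilizer classification. You instead lift only one segment of $\delta$ to place its holonomy $g$ in $\mathrm{Stab}_{\Gamma_{X}}(A_{m})$, and then classify the stabilizers of the three kinds of components; this costs more work (torsion-freeness and discreteness are invoked repeatedly, and the embedded-annulus step is needed to see that the core $\pi(g_{m})$ is simple), but it buys a sharper structural conclusion: the only essential closed curves $\pi(A)$ can cover are, up to free homotopy, powers of the simple core geodesics of geodesic-neighbourhood components, the disk and horodisk cases being outright impossible for hyperbolic holonomy.

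One caveat, which applies equally to the paper's proof and so is not a gap on your side: both arguments read the hypothesis that the curve ``is not $\gamma^{n}$'' up to free homotopy, i.e.\ as saying that $\delta$ has essential self-intersections. Your contradiction produces a curve \emph{freely homotopic} to $\gamma^{k}$ rather than literally equal to it, and the paper's ``at least four points at infinity'' claim likewise fails for a curve whose self-crossings are all inessential. Under the literal reading the lemma would in fact be false --- the annular components of $\pi(A)$ in Example \ref{ex: neighbourhoods of a geodesic} contain essential self-intersecting curves that are not literally powers of simple closed curves --- so the homotopical reading is the intended one, and under it your proof is complete.
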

       \begin{proof}
       Let $\Sigma$ be such a subsurface of $X$ and let $\alpha$ be such an essential self-intersecting closed curve on $\Sigma$. Assume that $\pi(A)$ covers $\Sigma$, then $\pi(A)$ covers $\alpha$. This implies that $A$ contains a connected component of a lift of $\alpha$ in $\mathbb{H}^2$, called $\tilde{\alpha}$. Note that $\alpha$ is self-intersecting and cannot be written as $\gamma^n$ for any simple closed curve $\gamma$ and $n\in\mathbb{Z}$, then $\tilde{\alpha}$ intersects the boundary at infinity $\widehat{\mathbb{R}}$ with at least four points. Moreover, since $A_i$ and $A_j$ are disjoint for all $i\not= j$, then $A$ has a component $A_n$ which contains $\tilde{\alpha}$ and intersects $\widehat{\mathbb{R}}$ with at least four points. By Definition \ref{definition of a removable set}, each component of a removable set intersects $\widehat{\mathbb{R}}$ with at most two points. This contradiction proves that $A$ fails to cover $\Sigma$.
       \end{proof}

       \begin{lemma}\label{Wolpert's inequality in our case}
       Let $X_{0}$ be a complete hyperbolic surface of infinite type with boundary. For any $X=[X,f_1]$, $Y=[Y,f_2]$ in $\mathcal{T}(X_0)$, let $f=f_2\circ {f_1}^{-1}$ and let $K=K[f]$ be the maximal dilation of $f$, then
       \begin{equation*}
            \frac{1}{K}\leq\frac{\ell_{f(\alpha)}(Y)}{\ell_{\alpha}(X)}\leq K,
        \end{equation*}
        for all $\alpha\in\mathcal{S}(X_0)\cup\mathcal{A}(X_0)$.
       \end{lemma}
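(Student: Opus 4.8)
The plan is to reduce the statement to the classical Wolpert inequality for closed geodesics on a surface \emph{without} boundary, and then to recover the case of arcs by doubling $X$ and $Y$ across their geodesic boundaries. Throughout I use only that $f=f_2\circ f_1^{-1}$ is $K$-quasiconformal; the geometric condition $(\star)$ plays no role here.

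\emph{Step 1 (closed geodesics).} First I would lift $f\colon X\to Y$ to a $K$-quasiconformal homeomorphism $\tilde f$ of $\mathbb{H}^2$ with $\tilde f\,\Gamma_X\,\tilde f^{-1}=\Gamma_Y$. A closed geodesic $\gamma$ is represented by a hyperbolic element $g\in\Gamma_X$ of translation length $\ell_\gamma(X)$. The cyclic quotient $\langle g\rangle\backslash\mathbb{H}^2$ is conformally a ring domain, and a direct computation (passing to the coordinate $w=\log z$ with the axis of $g$ running from $0$ to $\infty$) shows its modulus equals $\pi/\ell_\gamma(X)$. The map $\tilde f$ descends equivariantly to a $K$-quasiconformal homeomorphism from this annulus onto $\langle \tilde f g\tilde f^{-1}\rangle\backslash\mathbb{H}^2$, which is exactly the annulus associated with $f(\gamma)$ and has modulus $\pi/\ell_{f(\gamma)}(Y)$. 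Since a $K$-quasiconformal map distorts the modulus of a ring domain by at most the factor $K$, we obtain $\tfrac1K\,\tfrac{\pi}{\ell_\gamma(X)}\le \tfrac{\pi}{\ell_{f(\gamma)}(Y)}\le K\,\tfrac{\pi}{\ell_\gamma(X)}$, and taking reciprocals yields the desired two-sided bound for closed geodesics.

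\emph{Step 2 (arcs, by doubling).} Let $\widehat X$ and $\widehat Y$ be the doubles of $X$ and $Y$ across their geodesic boundaries; these are complete hyperbolic surfaces without boundary. Define $\widehat f$ to agree with $f$ on $X$ and with $\sigma_Y\circ f\circ\sigma_X$ on the reflected copy, where $\sigma_X,\sigma_Y$ are the reflections fixing the boundary geodesics; the two definitions agree along $\partial X$, and by construction $\widehat f\circ\sigma_X=\sigma_Y\circ\widehat f$. Since the reflections are anticonformal (hence preserve the dilatation) while the gluing locus has measure zero and is quasiconformally removable, $\widehat f$ is $K$-quasiconformal. For $\alpha\in\mathcal S(X_0)$ the geodesic representative lies in the interior of $X$ and keeps its length in $\widehat X$, so $\alpha$ is covered by Step 1 directly. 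For $\alpha\in\mathcal A(X_0)$ the geodesic representative meets $\partial X$ orthogonally, so its double $\widehat\alpha$ is a smooth closed geodesic of $\widehat X$ with $\ell_{\widehat\alpha}(\widehat X)=2\,\ell_\alpha(X)$, and the commutation relation gives $\widehat f(\widehat\alpha)=f(\alpha)\cup\sigma_Y(f(\alpha))$, whence $\ell_{\widehat f(\widehat\alpha)}(\widehat Y)=2\,\ell_{f(\alpha)}(Y)$. Applying Step 1 to $\widehat f\colon\widehat X\to\widehat Y$ and the closed geodesic $\widehat\alpha$ and cancelling the factor $2$ gives the claim for arcs as well.

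\emph{Main obstacle.} The routine inequalities (modulus distortion and reciprocal bounds) are standard; the points requiring care are the two geometric reductions in Step 2: verifying that the doubled map $\widehat f$ has the same maximal dilation as $f$ (this is where anticonformality of the reflections and removability of the geodesic gluing locus enter), and verifying that the double of a geodesic arc perpendicular to $\partial X$ is a genuine smooth closed geodesic of exactly twice the length. I expect the latter, together with checking that the cyclic covers and lifts in Step 1 behave as in the finite-type case, to be the main thing to justify; fortunately every argument is carried out one curve at a time and is therefore insensitive to the fact that $X_0$ has infinite type.
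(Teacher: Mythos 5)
Your proposal is correct and follows essentially the same route as the paper: the paper's proof simply cites Wolpert's inequality for surfaces without boundary together with the doubling argument (Theorem 2.1 of \cite{LTBoundary}), which are exactly the two steps you prove from scratch (Wolpert via moduli of annular covers, then doubling across the geodesic boundary). The only point to phrase carefully is that Step 1 should be invoked only for the doubled, boundaryless surfaces $\widehat X,\widehat Y$ (as your Step 2 in fact does), since for the bordered surfaces themselves the universal cover is the convex hull $C(\Lambda(\Gamma_X))$ rather than all of $\mathbb{H}^2$, so the lift of $f$ is not a self-map of $\mathbb{H}^2$.
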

        \begin{proof}
         We recall a result of Wolpert (see \cite{Wolpert}), which says that given any $K'$-quasiconformal map $h$ between two hyperbolic surfaces $X'$ and $Y'$ without boundary, we have
        \begin{equation*}
            \frac{1}{K'}\leq\frac{\ell_{h(\alpha)}(Y')}{\ell_{\alpha}(X')}\leq K',
        \end{equation*}
        for all isotopy classes of essential closed curves $\alpha$ on $X'$. Note that this result also holds for isotopy classes of essential closed curves and essential arcs on hyperbolic surfaces with boundary, by applying an argument of doubling (see e.g. Theorem 2.1 in \cite{LTBoundary}). Therefore, for any $X=[X,f_1]$, $Y=[Y,f_2]$ in $\mathcal{T}(X_0)$, since $f=f_2\circ {f_1}^{-1}:X\rightarrow Y$ is a $K$-quasiconformal map, we have
         \begin{equation*}
            \frac{1}{K}\leq\frac{\ell_{f(\alpha)}(Y)}{\ell_{\alpha}(X)}\leq K,
        \end{equation*}
        for all $\alpha\in\mathcal{S}(X_0)\cup\mathcal{A}(X_0)$.
        \end{proof}

      Now we give some properties of the geometric condition $(\star)$ as follows.

        \begin{proposition}\label{the measure of the limit set of T(X_0)} Let $X_{0}$ be a complete hyperbolic surface of infinite type with boundary which satisfies the geometric condition $(\star)$. Then for any $X\in \mathcal{T}(X_{0})$, the following statements hold:

        \begin{enumerate}[(1)]
            \item The number of all boundary components of $X$ is countably infinite.
             \item $X$ satisfies the weak geometric condition $(\diamond)$.
            \item The limit set of the Fuchsian group $\Gamma_{X}$ has  1-dimensional measure zero.
            \item The sum of the lengths of all boundary components of $X$ is infinite.
        \end{enumerate}





       \end{proposition}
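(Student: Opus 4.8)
The plan is to settle (2) first, read off (3) from it at once, dispatch (1) by a divergence/counting argument, and reserve the real geometry for (4). For (2), I would use the set-theoretic and metric equivalence $\mathcal{T}(X_0)=\mathcal{T}_{bL}(X_0)$ of Theorem \ref{bi-Lip and qc map} to realize the marking by a bi-Lipschitz homeomorphism $f\colon X_0\to X$, lift it to a $\Gamma_{X_0}$-equivariant quasi-isometry $\tilde f$ of $\mathbb{H}^2$ conjugating $\Gamma_{X_0}$ to $\Gamma_X$, and pass to its boundary extension, a homeomorphism of $\widehat{\mathbb{R}}$ carrying $\Lambda(\Gamma_{X_0})$ onto $\Lambda(\Gamma_X)$. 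Writing $A=\bigcup_n A_n$ for the removable set furnished by $(\star)$, I would check that $A':=\tilde f(A)$ is weakly removable for $\Gamma_X$ in the sense of Definition \ref{definition of a weakly removable set}: it is $\Gamma_X$-invariant, and since each $\overline{A_n}\cap\widehat{\mathbb{R}}$ consists of at most two points, its image $\overline{\tilde f(A_n)}\cap\widehat{\mathbb{R}}=\tilde f(\overline{A_n}\cap\widehat{\mathbb{R}})$ again has at most two points, hence $1$-dimensional measure zero. Because a quasi-isometry sends the convex hull of $\Lambda(\Gamma_{X_0})$ to within bounded Hausdorff distance of that of $\Lambda(\Gamma_X)$ and distorts the distance-$L$ condition only by the bi-Lipschitz constant, all points of $C(\Lambda(\Gamma_X))\setminus A'$ lie within a bounded distance of $\partial C(\Lambda(\Gamma_X))$; thus $X$ satisfies $(\diamond)$. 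Statement (3) is then immediate from (2) and the result of Matsuzaki recalled in Remark \ref{the weak geometric condition theorem}.

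For (1) I would argue on $X_0$ and transport the count. Since $X_0$ satisfies $(\star)$, Theorem \ref{Matsuzaki 1} gives $\dim\Lambda(\Gamma_{X_0})\le\alpha<1$, so the contrapositive of Theorem \ref{Matsuzaki 2} forces $\sum_n\ell_{\beta_n}(X_0)^{1/2}=\infty$. A finite sum of finite geodesic lengths is finite, so $\mathcal{B}(X_0)$ cannot be finite; it is at most countable because $X_0$ admits a countable pants decomposition, hence countably infinite. As the marking is a homeomorphism fixing each boundary component setwise, $\mathcal{B}(X)$ is in bijection with $\mathcal{B}(X_0)$, which gives (1).

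Statement (4) is the heart of the matter, and I would again reduce it to $X_0$. By Lemma \ref{Wolpert's inequality in our case} applied to the pair $(X_0,X)$, $\ell_{f(\beta_n)}(X)\ge K^{-1}\ell_{\beta_n}(X_0)$ for the dilatation $K$ of the marking, so $\sum_n\ell_{\beta_n}(X)\ge K^{-1}\sum_n\ell_{\beta_n}(X_0)$ and it suffices to prove $\sum_n\ell_{\beta_n}(X_0)=\infty$. Here I would invoke the Basmajian identity (Proposition \ref{Basmajian identity theorem}), legitimate on $X_0$ because $\dim\Lambda(\Gamma_{X_0})<1$ forces $1$-dimensional measure zero: summing over boundary components, $\sum_n\ell_{\beta_n}(X_0)$ equals, up to the bookkeeping of counting each orthogeodesic from both feet, $\sum_{o}2\log\coth(\ell(o)/2)$ taken over the orthogeodesics $o$ joining components of $\partial X_0$. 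Since $\log\coth(t/2)$ is positive and bounded below by $\log\coth(D/2)>0$ on $(0,D]$, it then suffices to produce infinitely many orthogeodesics of length at most some fixed $D$.

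This last step is where I expect the genuine difficulty, and it is exactly where $(\star)$ and Lemma \ref{the cover of a removable set} must be combined. The condition $(\star)$ places every point of $C_{X_0}$ outside $\pi(A)$ within distance $L$ of $\partial C_{X_0}$, which manufactures a short arc to the boundary through every uncovered point; meanwhile Lemma \ref{the cover of a removable set} guarantees that $\pi(A)$ cannot engulf any pair of pants of the decomposition, since each carries an essential self-intersecting non-primitive curve, so each of the infinitely many pants leaves an uncovered point within distance $L$ of $\partial C_{X_0}$. The obstacle is that Lemma \ref{the cover of a removable set} is purely qualitative: it yields an uncovered point but no definite uncovered \emph{area}, so the delicate part is to upgrade it—e.g.\ via a Margulis-type lower bound on the injectivity radius in the thick part of each pants—to a quantitative statement forcing either $\mathrm{Area}(C_{X_0}\setminus\pi(A))=\infty$, whence $\sinh(L)\sum_n\ell_{\beta_n}(X_0)\ge\mathrm{Area}(C_{X_0}\setminus\pi(A))=\infty$, or an infinite family of genuinely distinct uniformly short orthogeodesics feeding the Basmajian sum. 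I also note the structural reason the argument must run on $X_0$ rather than on $X$: a general $X$ carries only the weak condition $(\diamond)$, whose components may meet $\widehat{\mathbb{R}}$ in infinite measure-zero sets, so Lemma \ref{the cover of a removable set} is simply unavailable there.
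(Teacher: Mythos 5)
Your proofs of (1), (2) and (3) are correct and coincide with the paper's: (1) plays Theorem \ref{Matsuzaki 1} against Theorem \ref{Matsuzaki 2} to rule out finitely many boundary components; (2) transports the removable set through a lift of the bi-Lipschitz marking furnished by Theorem \ref{bi-Lip and qc map} and checks that the image is weakly removable because the closure of each component still meets $\widehat{\mathbb{R}}$ in at most two points; (3) then follows from Remark \ref{the weak geometric condition theorem}. (The paper verifies the distance condition in (2) downstairs on the surfaces, where every element of $\mathcal{T}(X_{0})$ is a complete surface with geodesic boundary and the bi-Lipschitz map transfers the bound directly; this avoids your Hausdorff-distance remark about convex hulls, but the difference is cosmetic.)

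The genuine gap is in (4), and you have located it yourself. After the Wolpert reduction to $X=X_{0}$ (which is also how the paper begins), you propose to feed the Basmajian identity with infinitely many orthogeodesics of uniformly bounded length, or alternatively to show that $C_{X_{0}}\setminus\pi(A)$ has infinite area; but you supply neither, conceding that Lemma \ref{the cover of a removable set} is ``purely qualitative'' and appealing to an unspecified Margulis-type upgrade. As you note, that lemma yields one uncovered point per suitable subsurface and nothing more, so your argument terminates before reaching the conclusion. The paper's proof of (4) needs none of this quantitative input because it runs the implication in the contrapositive direction: assume $\sum_{i}b_{i}<\infty$ with $b_{i}=\ell_{\beta_{i}}(X_{0})$. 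Then $b_{i}\rightarrow 0$, so by the collar lemma the collar widths $r(b_{i})=\arcsinh\{1/\sinh(\frac{1}{2}b_{i})\}$ tend to infinity; hence, for the constant $L$ of condition $(\star)$, the subsurface $\Omega=X_{0}\setminus B(\partial X_{0};L)$ contains at least two distinct isotopy classes of simple closed curves, and therefore an essential self-intersecting closed curve which is not a power of a simple closed curve. Condition $(\star)$ forces $\pi_{0}(A)\supseteq\Omega$ for the removable set $A$, while Lemma \ref{the cover of a removable set} says precisely that $\pi_{0}(A)$ cannot cover such an $\Omega$ --- a contradiction. In other words, the summability hypothesis is used to manufacture a region far from $\partial X_{0}$ that the removable set would be obliged to swallow; the qualitative covering lemma then suffices, and neither the Basmajian identity nor any area or injectivity-radius estimate is needed for this part.
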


       \begin{proof}
      \emph{The proof of (1)}: Recall that the hyperbolic surfaces of infinite type in this paper admit a countable pair of pants decomposition.  This implies that if $X$ has infinitely many boundary components, then the number of its boundary components is countably infinite. Note that $X=[X,f]$ for a quasiconformal map $f:X_{0}\rightarrow X$ which leaves each puncture and each boundary component setwise fixed.

      It suffices to prove that the number of all boundary components of $X_{0}$ is infinite. Note that $X_{0}$ is complete and thus each boundary component of $X_{0}$ is a simple closed geodesic, which implies that $X_{0}$ has no boundary component of infinite length.  

      We argue by contradiction. Assume that $X_0$ has finitely many geodesic boundary components and denote them by $\beta_1, \beta_2,...,\beta_n$. Then
      \begin{equation*}
      \sum_{i=1}^{n}\ell_{\beta_i}(X_0)^{\frac{1}{2}}<\infty.
      \end{equation*}
      By Theorem \ref{Matsuzaki 2}, the Hausdorff dimension of the limit set $\Lambda(\Gamma_{X_0})$ is 1. However, note that $X_0$ satisfies the geometric condition ($\star$) and by Theorem \ref{Matsuzaki 1}, the Hausdorff dimension of the limit set $\Lambda(\Gamma_{X_0})$ is less than 1. This produces contradiction.

        \emph{The proof of (2)}:
        By Theorem \ref{bi-Lip and qc map}, there exists a constant $C$ such that for every $[X,f]\in\mathcal{T}(X_{0})$, we have $d_{bL}([X_{0},id],[X,f])\leq C d_{T}([X_{0},id],[X,f])$. By the definition of $d_{bL}$, there exists a bi-Lipschitz homeomorphism $g: X_{0}\rightarrow X$ homotopic to $f$ with the bi-Lipschitz constant 
       $L(g)\leq e^{2Cd_{T}(X_{0}, X)}+\epsilon_{0}$
       where $\epsilon_{0}$ is a sufficiently small positive number. Let 
       $M=e^{2Cd_{T}(X_{0}, X)}+\epsilon_{0}$.
       We obtain that
       \begin{equation}\label{inequality for hyperbolic distance}
      \frac{1}{M} \rho_{X_{0}}(x,y)\leq \rho_{X}(g(x),g(y))\leq M \rho_{X_{0}}(x,y),
       \end{equation}
       for any two points $x$, $y$ on $X_{0}$, where $\rho_{X_{0}}$ (resp. $\rho_{X}$) denotes the hyperbolic distance on $X_{0}$ (resp. $X$) induced by the hyperbolic structure of $X_{0}$ (resp. $X$).

       Since $X_{0}$ satisfies the geometric condition $(\star)$, then there exists a positive constant $L$  and a removable set 
       $A=\cup_{n\in \mathbb{N}}A_{n}\subset\mathbb{H}^{2}$ for $\Gamma_{X_{0}}$
       such that
       \begin{equation*}
       \rho_{X_{0}}(x, \partial X_{0})\leq L,
       \end{equation*}
        for any point $x$ on $X_{0}$ except the image of the removable set $A\subset \mathbb{H}^{2}$ under the universal covering map $\pi_{0}$ of $X_{0}$, where $\rho_{X_{0}}(x, \partial X_{0})=\inf\limits_{y\in \partial X_{0}}\rho_{X_{0}}(x,y)$.

        It follows directly
        from \eqref{inequality for hyperbolic distance} that
        \begin{equation}\label{ineq: bounded distace from boundary of X}
       \rho_{X}(p, \partial X)\leq ML,
       \end{equation}
         for any point $p$ on $X$ except the set 
         $g(\pi_{0}(A))\subset X$.

         Let $\tilde{g}$ be a lift of the map $g$ to the universal covering space of $X$. Set $A'=\tilde{g}(A)$. First we claim that $A'=\cup_{n\in \mathbb{N}}\,\tilde{g}(A_{n})$ is a weakly removable set for $\Gamma_{X}$. Indeed, $A'$ is a disjoint union of open sets in $\mathbb{H}^2$, since $\tilde{g}$ is a homeomorphism and $A$ is a disjoint union of open sets in $\mathbb{H}^2$. Note that $A$ is $\Gamma_{X_0}$-invariant, and $\tilde{g}$ is equivariant with respect to $\Gamma_{X_0}$ and $\Gamma_{X}$, then $A'$ is invariant under the action of $\Gamma_X$. By Definition \ref{definition of a removable set}, for each $n\in\mathbb{N}$, $A_n$ is possibly a hyperbolic disk, a horodisk tangent to $\widehat{\mathbb{R}}$, or an $r$-neighbourhood of a complete geodesic in $\mathbb{H}^2$ for some $r>0$.

         Note that $\tilde{g}:\mathbb{H}^2\rightarrow \mathbb{H}^2$ is a bi-Lipschitz homeomorphism with respect to the hyperbolic metrics, then it induces a homeomorphism from $\widehat{\mathbb{R}}$ to $\widehat{\mathbb{R}}$. If $A_n$ is a hyperbolic disk, then $\tilde{g}(A_n)$ is a topologically disk in $\mathbb{H}^{2}$ whose euclidean closure does not intersect $\widehat{\mathbb{R}}$. If $A_n$ is a horodisk tangent to $\mathbb{\widehat{\mathbb{R}}}$ at $\xi\in\widehat{\mathbb{R}}$, then the euclidean clousre of $\tilde{g}(A_n)$ intersects $\mathbb{\widehat{\mathbb{R}}}$ exactly at $\tilde{g}(\xi)\in\widehat{\mathbb{R}}$. If $A_n$ is a neighbourhood of a complete geodesic with two distinct endpoints $\xi_1,\xi_2\in \widehat{\mathbb{R}}$, then the euclidean clousre of $\tilde{g}(A_n)$ intersects $\mathbb{\widehat{\mathbb{R}}}$ exactly at two distinct points $\tilde{g}(\xi_1), \tilde{g}(\xi_2)\in\widehat{\mathbb{R}}$. Therefore, the euclidean closure of each $\tilde{g}(A_n)$ intersects $\widehat{\mathbb{R}}$ with the set of 1-dimensional measure zero. By Definition \ref{definition of a weakly removable set}, $A'$ is a weakly removable set for $\Gamma_{X}$.

        By \eqref{ineq: bounded distace from boundary of X} and the fact that $g(\pi_{0}(A))=\pi(A')$, there exists a constant $L'=ML>0$ and a removable set $A'=\tilde{g}(A)$ for $\Gamma_{X}$ such that
        \begin{equation*}
       \rho_{X}(p, \partial X)\leq L',
       \end{equation*}
        for all $p\in X\setminus \pi(A')$. This implies that $X$ satisfies the weak geometric condition $(\diamond)$.

        \emph{The proof of (3)}:
        By \emph{Statement (2)} and Remark \ref{the weak geometric condition theorem}, the 1-dimensional measure of the limit set $\Lambda(\Gamma_{X})$ is zero.

         \emph{The proof of (4)}:
         First we prove that it suffices to show this statement for the special case $X=X_0$. Indeed, by Lemma \ref{Wolpert's inequality in our case}, for any $X=[X,f]\in\mathcal{T}(X_0)$, let $K$ be the maximal dilation of $f$, we have
        \begin{equation}\label{length under quasiconformal maps}
            \frac{1}{K}\leq\frac{\ell_{f(\alpha)}(X)}{\ell_{\alpha}(X_0)}\leq K,
        \end{equation}
        for all $\alpha\in\mathcal{S}(X_0)\cup\mathcal{A}(X_0)$.

          By \emph{Statement (1)}, $X_0$ has infinitely many boundary components and denote the set of boundary components of $X_0$ by $\mathcal{B}(X_0) = \{\beta_1,\beta_{2},...,\beta_k,...\}$. By \eqref{length under quasiconformal maps}, we have
         $\sum\limits_{i=1}\limits^\infty \ell_{\beta_{i}}(X)<\infty$ if and only if  $\sum\limits_{i=1}\limits^\infty \ell_{\beta_{i}}(X_0)<\infty$. Hence, we only need to consider $X=X_0$.

         Denote $b_{i}=\ell_{\beta_{i}}(X_0)$. We argue by contradiction. Suppose
          \begin{equation*}
         \sum\limits_{i=1}\limits^\infty b_{i}<\infty.
         \end{equation*}
         Then $b_{i}\rightarrow0$, as $i\rightarrow \infty$. By the collar lemma (see \cite{Buser}), there exists a collar neighbourhood $\mathcal{N}(\beta_{i})=\{p\in X_0:\rho_{X_0}(p,\beta_{i})\leq r(b_{i})\}$ of $\beta_{i}$ such that $\mathcal{N}(\beta_{i})$ does not intersect any other simple closed geodesics disjoint from $\beta_{i}$, where $\rho_{X_0}$ denotes the hyperbolic distance on $X_0$ and $r(b_{i})=\arcsinh\{1/\sinh(\frac{1}{2}b_{i})\}$.

         Note that $r(b_i)\rightarrow \infty$, as $i\rightarrow \infty$. For any $L>0$ and any removable set $A$ for $\Gamma_{X_0}$, there exists an integer $n_0>0$ (depending on $L$) such that $r(b_i)>L$ for all $i\geq n_0$. Denote by $B(\partial X_0; L)$ the set consisting of the points on $X_0$ lying within the distance $L$ of $\partial X_0$. Let $\Omega = X_0\setminus B(\partial X_0; L)$.

           Note that $\Omega$ contains at least two distinct isotopy classes of simple closed curves, then it contains an essential self-intersecting curve $\alpha$ which is not $\gamma^n$ for any simple closed curve $\gamma$ and $n\in\mathbb{Z}$. By Lemma \ref{the cover of a removable set}, $\pi_0(A)$ fails to cover $\Omega$, where $\pi_0$ is the universal covering map of $X_0$. This contradicts the assumption that $X_0$ satisfies the geometric condition $(\star)$.
       \end{proof}


        \begin{remark}
      The assumption that $X_{0}$ is complete is necessary for Statement (1) of Proposition \ref{the measure of the limit set of T(X_0)}. Otherwise, there exists a hyperbolic surface of infinite type called \emph{tight flute surface} by Basmajian
      (see \cite{Basmajian2, Basmajian3}) satisfying the geometric condition ($\star$) but has only one geodesic boundary component which is a simple open infinite geodesic (see Example \ref{incomplete ex}).
       \end{remark}

       Combining Proposition \ref{Basmajian identity theorem},
      Proposition \ref{McShane identity theorem} and Statement (3) of Proposition
      \ref{the measure of the limit set of T(X_0)}, we have the following corollary.

      \begin{corollary}\label{Basmajina and McShane}
      Let $X_{0}$ be a complete hyperbolic surface of infinite type with boundary which satisfies the geometric condition $(\star)$. Then both the Basmajian identity and the generalized McShane identity hold for $\mathcal{T}(X_{0})$.
      \end{corollary}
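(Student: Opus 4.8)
The plan is to verify both identities pointwise across the entire Teichm\"uller space by reducing each point to the hypothesis under which Proposition \ref{Basmajian identity theorem} and Proposition \ref{McShane identity theorem} were established, namely that the limit set has vanishing $1$-dimensional measure. The key observation is that the geometric condition $(\star)$ is imposed only on the base surface $X_{0}$, whereas the two identities must hold for every marked surface $X\in\mathcal{T}(X_{0})$; the bridge between the two is Statement (3) of Proposition \ref{the measure of the limit set of T(X_0)}, which propagates the measure-zero property from $X_{0}$ to every $X$ in the Teichm\"uller space.

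Concretely, I would fix an arbitrary $X=[X,f]\in\mathcal{T}(X_{0})$. Since $X_{0}$ satisfies $(\star)$, Statement (3) of Proposition \ref{the measure of the limit set of T(X_0)} guarantees that $\Lambda(\Gamma_{X})$ has $1$-dimensional measure zero, which is exactly the standing hypothesis of both Proposition \ref{Basmajian identity theorem} and Proposition \ref{McShane identity theorem}. Applying Proposition \ref{Basmajian identity theorem} to $X$ then yields the Basmajian identity \eqref{Basmajian identity} for each boundary component $\beta_{j}\in\mathcal{B}(X)$, and applying Proposition \ref{McShane identity theorem} yields the generalized McShane identity \eqref{McShane identity} at $X$. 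As $X$ was arbitrary, both identities hold throughout $\mathcal{T}(X_{0})$, which is the assertion of the corollary.

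The substantive work has already been carried out upstream: the analytic heart is the quasiconformal-invariance issue flagged in Remark \ref{Matsuzaki}, where the class of Fuchsian groups with measure-zero limit set is \emph{not} preserved under quasiconformal deformation. What rescues the corollary is that $(\star)$, unlike bare measure-zero-ness, is a geometrically stable condition that transfers to the whole orbit, as established in the proof of Statements (2)--(3) of Proposition \ref{the measure of the limit set of T(X_0)} via the bi-Lipschitz control of Theorem \ref{bi-Lip and qc map}. Consequently I expect no genuine obstacle to remain in the corollary itself; the only point requiring care is the logical packaging, that is, reading ``holds for $\mathcal{T}(X_{0})$'' as ``holds at every point of $\mathcal{T}(X_{0})$'' and invoking the measure-zero hypothesis at each such point rather than merely at the basepoint $X_{0}$.
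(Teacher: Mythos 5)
Your proposal is correct and follows exactly the paper's own argument: the corollary is obtained by combining Statement (3) of Proposition \ref{the measure of the limit set of T(X_0)} (which propagates the measure-zero property of the limit set to every $X\in\mathcal{T}(X_{0})$) with Proposition \ref{Basmajian identity theorem} and Proposition \ref{McShane identity theorem} applied at each such $X$. Your added remarks on quasiconformal non-invariance and the bi-Lipschitz mechanism accurately describe why the upstream proposition is the real content, but they do not constitute a different route.
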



      \begin{question}
         Is the geometric condition ($\star$) quasiconformally invariant? That is, if $X_{0}$ satisfies the geometric condition ($\star$), then for any $X\in\mathcal{T}(X_{0})$, does $X$ also satisfy the geometric condition ($\star$)? We can also ask the same question for the weak geometric condition $(\diamond)$.
      \end{question}

        The construction of examples of hyperbolic surfaces of infinite type which satisfy the geometric condition $(\star)$ will be given in Section 5.

      \section{An asymmetric metric on $\mathcal{T}(X_{0})$}
       \begin{definition}\label{def of asymmetric metric}
       An \emph{asymmetric metric} on a set $M$ is a function $\delta:M\times M\rightarrow [0,+\infty)$ satisfying the following conditions.
        \begin{enumerate}[(a)]
          \item The separation axiom: for any $x, y \in M$, $\delta(x,y)=0$ if and only if $x=y$.
          \item The triangle inequality: $\delta(x,y)\leq \delta(x,z)+\delta(z,y)$, for all $x, y, z\in M$.
          \item The asymmetric condition: there exists $x, y \in M$, such that $\delta(x,y)\neq \delta(y,x)$.
        \end{enumerate}

        The pair $(M,\delta)$ defined as above is said to be an \emph{asymmetric metric space} (see \cite{Papado-Th2007, Thurston1998}). In particular, a function $f:M \times M\rightarrow [0,+\infty]$ is said to be \emph{positive definite} if it satisfies the separation axiom (a).
      \end{definition}

           For a Nielsen convex hyperbolic surface $X$ (equivalently, $X$ can be constructed by gluing some generalized hyperbolic pairs of pants along their boundary components), the \emph{Fenchel-Nielsen coordinates} of $X$ associated with a pair of pants decomposition $\mathcal{P}=\{C _{i}\}^{\infty}_{i=1}$ (see \cite{ALPSS}) is defined to be $\{\ell_{C_{i}}(X), t_{C_{i}}(X)\}^{\infty}_{i=1}$ consisting of the hyperbolic lengths with respect to $X$ of all the simple closed curves in $\mathcal{P}$ and the twisting parameters used to glue the pairs of pants, where the positive direction of twisting means turning left. It is understood that if $\alpha_{i}$ is peripheral, then there is no associated twisting parameter, and instead of a pair $(\ell_{C_{i}}(X), t_{C_{i}}(X))$, we take a single parameter $\ell_{C_{i}}(X)$.

        Now we recall some elementary knowledge about measured laminations (see \cite{ALPS1, Thurston}) for the completeness of exposition.

       A \emph{geodesic lamination} $\lambda$ on a hyperbolic surface $X$ is a closed subset of $X$ that is the disjoint union of simple complete geodesics (note that the geodesic with one end or both ends transversely hitting the boundary $\partial X$ is also considered to be complete) called the \emph{leaves} of $\lambda$. By the definition, a leaf $L$ of $\lambda$ on $X\in \mathcal{T}(X_{0})$ may be a geodesic boundary component of $X$, a geodesic ending at a cusp or a boundary component of $X$ ($L$ may transversely hit a boundary component or spiral around it ), or even a geodesic with one or both of its ends never stay in any compact subset of $X$ if $X$ is a surface of infinite type.
       Note that if $L$ is a geodesic that hits $\partial X$ at a point $p\in \partial X$, we require that $L$ is perpendicular to $\partial X$ at $p$.

       Let $\lambda$ be a geodesic lamination on $X$. A \emph{transverse measure} for $\lambda$ is an assignment of a finite positive Borel measure $\mu$ on each embedded arc $k$ on $X$ (transverse to $\lambda$ and with endpoints contained in the complement of $\lambda$), such that $\mu$ satisfies the following conditions:

       (1) The support of $\mu$ is $\lambda\cap k$.

       (2) $\mu$ is invariant under homotopies relative to the leaves of $\lambda$, that is, $\mu(k)=\mu(k')$ for any two transverse arcs $k$ and $k'$ that are homotopic through embedded arcs which move their endpoints within fixed complementary components of $\mu$.

        A \emph{measured geodesic lamination} is a pair $(\lambda, \mu)$, where $\lambda$ is a geodesic lamination and $\mu$ is a transverse measure. For simplicity, we call a ``measured lamination" instead of a ``measured geodesic lamination" and sometimes denote $(\lambda, \mu)$ by $\mu$.
        Denote by $\mathcal{ML}(X)$ the space of all measured laminations on $X$.

       Let $X_{0}$ be a complete hyperbolic surface of infinite type with boundary. Let $\{\mu_{n}\}^{\infty}_{n=0}$ be a sequence of measured laminations in $\mathcal{ML}(X_{0})$. We say that $\mu_{n}$ \emph{converges} to $\mu_{0}$ in $\mathcal{ML}(X_{0})$ if $i(\mu_{n},\alpha)\rightarrow i(\mu_{0},\alpha)$ for all $\alpha\in\mathcal{A}'(X_{0})\cup\mathcal{S}(X_{0})$, where $i(\mu_{n},\alpha)$ denotes the geometric intersection of $\mu_{n}$ and $\alpha$.

       Let $\mathcal{S}^{int}(X_{0})\subset\mathcal{S}(X_{0})$ be the set of homotopy classes of essential interior simple closed curves on $X_{0}$.  Denote by $\mathcal{ML}^{\mathcal{A}'}(X_{0})$ the closure of $\mathcal{A}'(X_{0})$ in $\mathcal{ML}(X_{0})$. That is, for any $\mu\in\mathcal{ML}^{\mathcal{A}'}(X_{0})$, there exists a sequence $\{\gamma_{n}\}^{\infty}_{n=1}$   in $\mathcal{A}'(X_{0})$ with a corresponding sequence of positive weights $\{t_{n}\}^{\infty}_{n=1}$, such that $\{t_{n}\gamma_{n}\}^{\infty}_{n=1}$ converges to $\mu$ in $\mathcal{ML}(X_{0})$.
      The \emph{hyperbolic length of $\mu$} is defined to be $\mathcal{L}_{\mu}(X_{0})=\lim\limits_{n\rightarrow\infty}t_{n}\ell_{\gamma_{n}}(X_{0})$
   (see \cite{P,Thurston-notes} for more details).  In particular, $\mathcal{L}_{\mu}(X_{0})=\ell_{\mu}(X_{0})$ for all $\mu\in\mathcal{A}'(X_{0})\cup\mathcal{S}(X_{0})$. It is known that $\mathcal{L}_{\mu}(X_{0})$ is independent of the choice of the sequence which converges to it. Therefore,
   \begin{equation*}
   \log\sup_{\gamma\in \mathcal{A}'(X_{0})}\frac{\ell_{\gamma}(Y)}{\ell_{\gamma}(X)}
   =\log\sup_{\mu\in\mathcal{ML}^{\mathcal{A}'}(X_{0})}
   \frac{\mathcal{L}_{\mu}(Y)}{\mathcal{L}_{\mu}(X)}.
   \end{equation*}

    In this section, we consider the following two functions on $\mathcal{T}(X_{0})\times\mathcal{T}(X_{0})$:
    \begin{equation*}
   \begin{split}
   d(X,Y)=\log\sup_{\alpha \in \mathcal{A}(X_{0})\cup\mathcal{S}(X_{0})}\frac{\ell_{\alpha}(Y)}{\ell_{\alpha}(X)},\\
     \overline{d}(X,Y)=\log\sup_{\alpha \in \mathcal{A}(X_{0})\cup\mathcal{S}(X_{0})}\frac{\ell_{\alpha}(X)}{\ell_{\alpha}(Y)},
     \end{split}
    \end{equation*}
  for all $X, Y\in \mathcal{T}(X_{0})$.

  \begin{lemma}\emph{(Thurston \cite{Thurston1998},
  Proposition 3.5)}\label{Thurston proposition 3.5}
    For any two complete hyperbolic structures $X, Y$ on a surface $S$ of finite type without boundary, we have
    \begin{equation*}
    \sup_{\alpha\in\mathcal{S}(S)}\frac{\ell_{\alpha}(Y)}{\ell_{\alpha}(X)}
    = \sup_{\alpha\in\pi_{1}(S)-\{0\}}\frac{\ell_{\alpha}(Y)}{\ell_{\alpha}(X)},
    \end{equation*}
    where $\pi_{1}(S)-\{0\}$ is the set of homotopy classes of essential closed curves.
    \end{lemma}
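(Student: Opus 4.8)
The plan is to prove the two inequalities separately. Since every essential simple closed curve is in particular an essential closed curve, we have $\mathcal{S}(S)\subseteq \pi_{1}(S)-\{0\}$, so the inequality $\sup_{\alpha\in\mathcal{S}(S)}\frac{\ell_{\alpha}(Y)}{\ell_{\alpha}(X)}\leq \sup_{\alpha\in\pi_{1}(S)-\{0\}}\frac{\ell_{\alpha}(Y)}{\ell_{\alpha}(X)}$ is immediate. Writing $L:=\sup_{\alpha\in\mathcal{S}(S)}\frac{\ell_{\alpha}(Y)}{\ell_{\alpha}(X)}$, the real content is to show that no \emph{self-intersecting} closed geodesic can exceed this ratio, that is, $\ell_{\gamma}(Y)\leq L\,\ell_{\gamma}(X)$ for every $\gamma\in\pi_{1}(S)-\{0\}$.

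The main tool I would use is the theory of geodesic currents. On a finite-type hyperbolic surface the two length functions $\ell_{\,\cdot}(X)$ and $\ell_{\,\cdot}(Y)$ extend to continuous functions on the space $\mathcal{C}(S)$ of geodesic currents that are homogeneous of degree one, so the ratio $\frac{\ell_{\,\cdot}(Y)}{\ell_{\,\cdot}(X)}$ descends to a continuous function on the compact projectivized space $\mathbb{P}\mathcal{C}(S)$. Two density facts then convert the statement into one about currents: weighted essential simple closed curves are dense in the subspace $\mathcal{ML}(S)$ of measured geodesic laminations, while weighted essential closed geodesics (allowing self-intersections) are dense in all of $\mathcal{C}(S)$. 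By continuity and homogeneity these give $L=\sup_{\mu\in\mathcal{ML}(S)}\frac{\ell_{\mu}(Y)}{\ell_{\mu}(X)}$ and $\sup_{\alpha\in\pi_{1}(S)-\{0\}}\frac{\ell_{\alpha}(Y)}{\ell_{\alpha}(X)}=\sup_{c\in\mathcal{C}(S)}\frac{\ell_{c}(Y)}{\ell_{c}(X)}$. Hence the lemma is equivalent to the assertion that the maximum of the length ratio over all geodesic currents is already attained on a measured lamination.

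The crux — and the step I expect to be the main obstacle — is therefore to show that the extremal current is laminar. By compactness of $\mathbb{P}\mathcal{C}(S)$ the continuous ratio attains its maximum at some current $c_{0}$. I would argue that $c_{0}$ may be chosen to have no transverse self-intersection in its support: if two leaves crossed transversally one could perform a resolution (smoothing) supported near the crossing and redistribute the transverse measure onto the locally maximal-stretch directions without decreasing the ratio, producing a maximizer with laminar support. Its support is then a geodesic lamination and $c_{0}\in\mathcal{ML}(S)$, which forces $\sup_{c\in\mathcal{C}(S)}\frac{\ell_{c}(Y)}{\ell_{c}(X)}=\sup_{\mu\in\mathcal{ML}(S)}\frac{\ell_{\mu}(Y)}{\ell_{\mu}(X)}=L$ and closes the argument. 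Making this surgery quantitative is the delicate point; equivalently one may invoke Thurston's identification of the maximal-stretch locus as a chain-recurrent geodesic lamination.

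As a conceptual cross-check I would note the Lipschitz viewpoint, which isolates the same obstacle. For any map $f\colon X\to Y$ homotopic to the identity and any closed geodesic $\gamma$, the image of the $X$-geodesic representative is a curve in the class of $\gamma$ of $Y$-length at most $\lip(f)\,\ell_{\gamma}(X)$, so $\ell_{\gamma}(Y)\leq \lip(f)\,\ell_{\gamma}(X)$ for \emph{all} $\gamma$ simultaneously; taking the infimum over such $f$ yields $\sup_{\alpha\in\pi_{1}(S)-\{0\}}\frac{\ell_{\alpha}(Y)}{\ell_{\alpha}(X)}\leq \inf_{f\simeq \mathrm{id}}\lip(f)$. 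Thurston's theorem that this infimum equals $L$ (realized by a stretch map whose maximal-stretch locus is a lamination) then gives the reverse inequality and recovers the equality. Either route reduces the problem to the same realization statement, so that is where I would concentrate the work.
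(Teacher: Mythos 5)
First, a point of comparison: the paper contains no proof of this lemma at all. It is imported verbatim as Thurston's Proposition~3.5, and the only commentary supplied is the remark that Thurston's argument is insensitive to the topological type of the surface. So your proposal has to stand as a free-standing proof, and as written it does not.

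The genuine gap is exactly the step you flag as the place to ``concentrate the work'': the claim that a maximizer of $\ell_{\cdot}(Y)/\ell_{\cdot}(X)$ over projectivized geodesic currents can be taken laminar. As you yourself observe, after the (correct) reductions via density of weighted closed geodesics in $\mathcal{C}(S)$ and of weighted simple closed curves in $\mathcal{ML}(S)$, this claim is \emph{equivalent} to the lemma, so the entire content of the statement has been pushed into it; and the argument you offer for it is not a proof. Resolving a transverse crossing and re-straightening strictly decreases length simultaneously in both metrics $X$ and $Y$, so there is no monotonicity of the ratio under smoothing; ``redistributing the transverse measure onto the locally maximal-stretch directions'' is not a defined operation on currents, and making it one -- in particular proving that the ratio does not drop -- is precisely the hard part of the theorem. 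Your fallback, Thurston's theorem that $\inf_{f\simeq \mathrm{id}}\lip(f)$ equals the supremum of the ratio over simple closed curves (realized by a stretch map along a maximally stretched lamination), does close the argument logically, and there appears to be no circularity since that theorem is established without Proposition~3.5; but it derives the lemma from the main theorem of the very paper the lemma is quoted from, a far deeper result. At that point you are not proving the statement, you are replacing the citation used in the paper by a heavier one.

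There is a second, more technical defect: the compactness you invoke is false in the stated generality. The lemma concerns \emph{complete} hyperbolic structures on finite-type surfaces, hence allows cusps, and in this paper the lemma is applied (in Proposition 4.4) to the double $S^{d}$ of a bordered finite-type surface, which has cusps whenever $S$ has punctures. For a cusped surface the projectivized space of geodesic currents is \emph{not} compact: closed geodesics making longer and longer excursions into a cusp, normalized by their lengths, cross any fixed compact transversal a bounded number of times while their lengths diverge, hence converge weak-$*$ to the zero current and admit no projective limit. So even your step ``the maximum is attained at some current $c_{0}$'' needs an extra argument (for instance, confining maximizing sequences to a compact core) that you do not give. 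Both defects point the same way: the currents machinery is tailored to compact surfaces, which is also why an argument of this kind could never support the paper's accompanying remark that the proof goes through for surfaces of arbitrary topological type.
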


    Indeed, the proof presented by Thurston \cite{Thurston1998} is independent of the topological types of hyperbolic surfaces, thus this equality holds for hyperbolic surfaces of all topological types.

\begin{lemma}\emph{(Proposition 2.7 in \cite{LPST})}\label{Boundary, double}
For any two complete hyperbolic structures $X, Y$ on a surface $S$ of finite type with boundary, we have
  \begin{equation*}
   \sup_{\gamma\in \mathcal{A}'(S)\cup\mathcal{S}(S)}\frac{\ell_{\gamma}(Y)}{\ell_{\gamma}(X)}
   =\sup_{\gamma\in\mathcal{S}(S^{d})}\frac{\ell_{\gamma}(Y^{d})}
   {\ell_{\gamma}(X^{d})},
   \end{equation*}
   where $S^{d}$ denotes the double of $S$ which carries a canonical involution such that the set of fixed points is $\partial S$. $X^{d}$, $Y^{d}$ are respectively the doubled structure of $X$, $Y$ on $S^{d}$.
\end{lemma}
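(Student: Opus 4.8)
The plan is to transport the problem to the doubled surface $S^{d}$ and to compare simple closed curves there with arcs and curves on $S$ through the doubling correspondence. Write $\sigma$ for the canonical isometric involution of $S^{d}$ whose fixed-point set is $\partial S$. Since $X$ and $Y$ have geodesic boundary, the doubled structures $X^{d}$ and $Y^{d}$ are genuine hyperbolic structures on the finite-type surface $S^{d}$ (without boundary), and $\sigma$ acts as an isometry of each. First I would record the length dictionary for $\sigma$-invariant simple closed geodesics on $S^{d}$. Such a geodesic is of exactly one of three types: (i) a component of the fixed locus $\partial S$, i.e. a boundary geodesic $\beta$ of $S$, with $\ell_{\beta}(X^{d})=\ell_{\beta}(X)$; (ii) a geodesic disjoint from $\partial S$, which together with its mirror image forms a pair $\gamma\sqcup\sigma\gamma$ coming from an interior simple closed geodesic $\gamma$ of $S$ with $\ell_{\gamma\sqcup\sigma\gamma}(X^{d})=2\ell_{\gamma}(X)$; (iii) a geodesic crossing $\partial S$, which, being $\sigma$-invariant, meets $\partial S$ orthogonally and is therefore the double $\alpha^{d}=\alpha\cup\sigma\alpha$ of a simple geodesic arc $\alpha$ perpendicular to $\partial S$, with $\ell_{\alpha^{d}}(X^{d})=2\ell_{\alpha}(X)$. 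In every case the constant factor cancels in the quotient, so the $X^{d},Y^{d}$-length ratio of the symmetric curve equals the $X,Y$-length ratio of the corresponding element of $\mathcal{A}'(S)\cup\mathcal{S}(S)$.

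The inequality $\sup_{\gamma\in\mathcal{A}'(S)\cup\mathcal{S}(S)}\ell_{\gamma}(Y)/\ell_{\gamma}(X)\le\sup_{\gamma\in\mathcal{S}(S^{d})}\ell_{\gamma}(Y^{d})/\ell_{\gamma}(X^{d})$ is then immediate: each $\gamma\in\mathcal{A}'(S)\cup\mathcal{S}(S)$ produces, via type (ii) or (iii), a genuine element of $\mathcal{S}(S^{d})$ realizing the same ratio.

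For the reverse inequality I would pass to measured laminations. By density of weighted simple closed curves in $\mathcal{ML}(S^{d})$ and continuity of the length functionals (with Lemma \ref{Thurston proposition 3.5} applied on $S^{d}$ to reduce from all closed curves to the simple ones), the right-hand supremum equals $\sup_{\mu\in\mathcal{ML}(S^{d})}\mathcal{L}_{\mu}(Y^{d})/\mathcal{L}_{\mu}(X^{d})$, a supremum attained projectively on the compact space of projective measured laminations. Because $\sigma$ is an isometry of both $X^{d}$ and $Y^{d}$, the functional $\mu\mapsto\mathcal{L}_{\mu}(Y^{d})/\mathcal{L}_{\mu}(X^{d})$ is $\sigma$-invariant; invoking the canonicity of Thurston's maximal stretch lamination for $(X^{d},Y^{d})$, its support is $\sigma$-invariant, and averaging a maximizing transverse measure with its $\sigma$-pushforward yields a $\sigma$-invariant maximizing measured lamination $\bar{\mu}$ without changing either length. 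A $\sigma$-invariant lamination has every leaf either contained in the interior of one copy of $S$, contained in $\partial S$, or crossing $\partial S$ orthogonally, so cutting $\bar{\mu}$ along $\partial S$ produces a measured lamination on $S$ that is a limit of weighted elements of $\mathcal{A}'(S)\cup\mathcal{S}(S)$ and carries the same length ratio (the factor $2$ cancelling as before). Hence the left-hand supremum is at least the maximal ratio on $S^{d}$, which completes the reverse inequality and the proof.

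The main obstacle is the symmetrization step: guaranteeing that the ratio-maximizing measured lamination on $S^{d}$ can be taken $\sigma$-invariant. A bare compactness argument shows only that the set of maximizers is $\sigma$-invariant, not that it contains a fixed point, so the argument genuinely relies on the canonical (hence $\sigma$-equivariant) nature of Thurston's maximal stretch lamination, followed by averaging the transverse measure. Once symmetry is secured, the orthogonality of invariant leaves at the fixed locus and the factor-two length bookkeeping are routine.
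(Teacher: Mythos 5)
This lemma is not proved in the paper at all: it is quoted from Proposition 2.7 of \cite{LPST}, and the remark following Proposition \ref{Lemma:A and O} identifies the mechanism of that cited proof as exactly your route — use compactness of the space of projective measured laminations on the finite-type double to realize the supremum by a measured lamination, symmetrize the maximizer, and descend to weighted arcs and curves on $S$. Your reconstruction is correct and essentially the same argument, including the genuinely delicate point you isolate (that $\sigma$-invariance of a maximizer comes from the canonicity of Thurston's maximal-stretch lamination together with averaging, not from bare compactness); the only slip is bookkeeping in the descent step — if the invariant maximizer charges $\partial S$, its boundary leaves double with factor $1$ while interior leaves and perpendicular leaves double with factor $2$, so the length ratio is preserved only after halving the weights carried by the boundary leaves when you cut along $\partial S$.
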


    \begin{proposition}\label{Lemma:A and O}
  For any two complete hyperbolic structures $X, Y$ on a surface $S$ of finite type with boundary, we have
  \begin{equation}\label{simple and nonsimple arcs}
   \sup_{\gamma\in \mathcal{A}(S)\cup\mathcal{S}(S)}\frac{\ell_{\gamma}(Y)}{\ell_{\gamma}(X)}
   =\sup_{\gamma\in\mathcal{A}'(S)\cup\mathcal{S}(S)}\frac{\ell_{\gamma}(Y)}{\ell_{\gamma}(X)}.
   \end{equation}
  \end{proposition}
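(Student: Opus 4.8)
The plan is to push everything onto the double $S^{d}$, where Thurston's comparison lemma (Lemma~\ref{Thurston proposition 3.5}) lets one replace simple closed curves by arbitrary closed curves. Since $\mathcal{A}'(S)\subset\mathcal{A}(S)$, one inequality,
\[
\sup_{\gamma\in \mathcal{A}(S)\cup\mathcal{S}(S)}\frac{\ell_{\gamma}(Y)}{\ell_{\gamma}(X)}\ \ge\ \sup_{\gamma\in\mathcal{A}'(S)\cup\mathcal{S}(S)}\frac{\ell_{\gamma}(Y)}{\ell_{\gamma}(X)},
\]
is immediate, so the content lies entirely in the reverse inequality. Denote the right-hand side by $K$. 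By Lemma~\ref{Boundary, double}, $K=\sup_{\gamma\in\mathcal{S}(S^{d})}\ell_{\gamma}(Y^{d})/\ell_{\gamma}(X^{d})$, and since the remark following Lemma~\ref{Thurston proposition 3.5} allows that lemma to be applied to the closed finite-type surface $S^{d}$, one further has $K=\sup_{\gamma\in\pi_{1}(S^{d})-\{0\}}\ell_{\gamma}(Y^{d})/\ell_{\gamma}(X^{d})$. Thus $K$ is the supremum of the length ratio over \emph{all} essential closed curves on the double.

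Next I would map each competitor $\gamma\in\mathcal{A}(S)\cup\mathcal{S}(S)$ on the left to a closed curve on $S^{d}$ with the identical length ratio. If $\gamma$ is an essential arc (possibly self-intersecting), its length-minimizing geodesic representative on $X$ meets $\partial S$ orthogonally at both endpoints---this is where the reduced convention, that homotopies need not fix $\partial S$ pointwise, is used. Letting $\iota$ be the canonical involution of $S^{d}$, the doubled curve $\gamma^{d}=\gamma\cup\iota(\gamma)$ is therefore smooth across $\partial S$, hence is a closed geodesic on $X^{d}$, and $\ell_{\gamma^{d}}(X^{d})=2\ell_{\gamma}(X)$, $\ell_{\gamma^{d}}(Y^{d})=2\ell_{\gamma}(Y)$; consequently
\[
\frac{\ell_{\gamma}(Y)}{\ell_{\gamma}(X)}=\frac{\ell_{\gamma^{d}}(Y^{d})}{\ell_{\gamma^{d}}(X^{d})}\le K.
\]
If instead $\gamma\in\mathcal{S}(S)$, then $\gamma$ already lies in $S\subset S^{d}$ with $\ell_{\gamma}(X^{d})=\ell_{\gamma}(X)$ and $\ell_{\gamma}(Y^{d})=\ell_{\gamma}(Y)$, so its ratio is again at most $K$. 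Taking the supremum over $\gamma\in\mathcal{A}(S)\cup\mathcal{S}(S)$ yields the reverse inequality, and the proposition follows.

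The step that will require genuine care is the identity $\ell_{\gamma^{d}}(X^{d})=2\ell_{\gamma}(X)$ for a non-simple arc $\gamma$: one must verify that the minimal geodesic representative of a self-intersecting arc class still meets $\partial S$ perpendicularly, so that its double is an honest smooth closed geodesic and no length is lost when passing to the geodesic representative on $S^{d}$. One should also note that $\gamma^{d}$ is essential on $S^{d}$, which is automatic since $\ell_{\gamma^{d}}(X^{d})=2\ell_{\gamma}(X)>0$ for essential $\gamma$. Once these geometric facts are in place the argument is formal, the analytic substance being entirely supplied by Lemmas~\ref{Boundary, double} and~\ref{Thurston proposition 3.5}.
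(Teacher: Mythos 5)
Your proposal is correct and follows essentially the same route as the paper's proof: reduce to the double $S^{d}$ via Lemma~\ref{Boundary, double}, enlarge $\mathcal{S}(S^{d})$ to all of $\pi_{1}(S^{d})-\{0\}$ via Lemma~\ref{Thurston proposition 3.5}, and then observe that the double $\gamma^{d}$ of a (possibly self-intersecting) geodesic arc $\gamma$ realizes the same length ratio, $\ell_{\gamma^{d}}(X^{d})=2\ell_{\gamma}(X)$. Your explicit verification that the doubled arc is a smooth closed geodesic (because the minimal representative meets $\partial S$ orthogonally) fills in a step the paper leaves implicit in the identity $\ell_{\gamma_{0}}(Y)/\ell_{\gamma_{0}}(X)=\ell_{\gamma_{0}^{d}}(Y^{d})/\ell_{\gamma_{0}^{d}}(X^{d})$.
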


\begin{proof}
Observe that $\mathcal{A}'(S)\subset \mathcal{A}(S)$. It suffices to verify that for any non-simple essential arc $\gamma_{0}\in\mathcal{A}(S)-\mathcal{A}'(S)$, the following inequality holds.
\begin{equation}\label{boundary, simple, nonsimple arcs}
\frac{\ell_{\gamma_{0}}(Y)}{\ell_{\gamma_{0}}(X)}\leq
\sup_{\gamma\in\mathcal{A}'(S)\cup\mathcal{S}(S)}
\frac{\ell_{\gamma}(Y)}{\ell_{\gamma}(X)}.
\end{equation}
By Lemma \ref{Boundary, double}, we have
\begin{equation}\label{double closed, arc}
\sup_{\gamma\in\mathcal{A}'(S)\cup\mathcal{S}(S)}
\frac{\ell_{\gamma}(Y)}{\ell_{\gamma}(X)}
=\sup_{\gamma\in\mathcal{S}(S^{d})}
\frac{\ell_{\gamma}(Y^{d})}{\ell_{\gamma}(X^{d})}.
\end{equation}
By Lemma \ref{Thurston proposition 3.5}, it follows that
\begin{equation}\label{double simple, nonsimple}
\sup_{\gamma\in\mathcal{S}(S^{d})}
\frac{\ell_{\gamma}(Y^{d})}{\ell_{\gamma}(X^{d})}
=\sup_{\gamma\in\pi_{1}(S^{d})-\{0\}}
\frac{\ell_{\gamma}(Y^{d})}{\ell_{\gamma}(X^{d})}.
\end{equation}
Denote by $\gamma_{0}^{d}$ the double of $\gamma_{0}$ with respect to $\partial S$. Combining \eqref{double closed, arc} and \eqref{double simple, nonsimple}, we derive that
\begin{equation*}
\begin{split}
\frac{\ell_{\gamma_{0}}(Y)}{\ell_{\gamma_{0}}(X)}
=\frac{\ell_{\gamma_{0}^{d}}(Y^{d})}{\ell_{\gamma_{0}^{d}}(X^{d})}
\leq \sup_{\gamma\in\pi_{1}(S^{d})-\{0\}}
\frac{\ell_{\gamma}(Y^{d})}{\ell_{\gamma}(X^{d})}
=\sup_{\gamma\in\mathcal{A}'(S)\cup\mathcal{S}(S)}
\frac{\ell_{\gamma}(Y)}{\ell_{\gamma}(X)}.
\end{split}
\end{equation*}
This implies \eqref{boundary, simple, nonsimple arcs}.
\end{proof}

\begin{remark}
Proposition \ref{Lemma:A and O} shows that the essential simple arcs taken in the definition of the arc metric $d_{A}$ for surfaces of finite type with boundary can be replaced by essential arcs. However, the method for the proof of Lemma \ref{Boundary, double} in \cite{LPST} is not valid if $S$ is a surface of infinite type with boundary. The reason is that the set $\mathcal{ML}^{\mathcal{S}}(S^{d})$ which is the closure of $\mathcal{S}(S^{d})$ in $\mathcal{ML}(S^{d})$ is not compact and it is possible that the value
\begin{equation*}
\sup_{\gamma\in\mathcal{S}(S^{d})}
\frac{\ell_{\gamma}(Y^{d})}{\ell_{\gamma}(X^{d})}
\end{equation*}
cannot be realized by any measured lamination in $\mathcal{ML}^{\mathcal{S}}(S^{d})$. As a result, the method for the proof of Proposition \ref{Lemma:A and O} also fails for the case of surfaces of infinite type with boundary.
\end{remark}

\begin{question}
For any two complete hyperbolic structures $X, Y$ on a surface $S$ of infinite type with boundary, does the following equality still hold?
\begin{equation*}
 \sup_{\gamma\in \mathcal{A}(S)\cup\mathcal{S}(S)}
 \frac{\ell_{\gamma}(Y)}{\ell_{\gamma}(X)}
   =\sup_{\gamma\in\mathcal{A}'(S)\cup\mathcal{S}(S)}
   \frac{\ell_{\gamma}(Y)}{\ell_{\gamma}(X)}.
\end{equation*}
\end{question}

     \begin{lemma}\label{lemma:dense}
     Let $X_{0}$ be a complete hyperbolic surface with boundary which has at least one interior simple closed curve. Then for any $\alpha\in\mathcal{S}^{int}(X_{0})$, the length of $\alpha$ can be approximated by a sequence of lengths of weighted simple geodesic arcs $\gamma_{n}\in \mathcal{A}'(X_{0})$.
     \end{lemma}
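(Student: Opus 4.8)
The plan is to realize the interior simple closed geodesic $\alpha$ as a limit, in $\mathcal{ML}(X_{0})$, of weighted simple arcs obtained by repeatedly twisting a fixed arc about $\alpha$. Concretely, I would first produce an essential simple arc $\gamma_{0}\in\mathcal{A}'(X_{0})$ with $k:=i(\gamma_{0},\alpha)>0$; then set $\gamma_{n}:=T_{\alpha}^{\,n}(\gamma_{0})$, where $T_{\alpha}$ denotes the left Dehn twist along $\alpha$, and weight these arcs by $t_{n}:=1/(nk)$. The assertion to be proved is that $t_{n}\gamma_{n}\to\alpha$ in $\mathcal{ML}(X_{0})$, whence $\alpha\in\mathcal{ML}^{\mathcal{A}'}(X_{0})$ and, by the definition of the extended length together with $\mathcal{L}_{\alpha}(X_{0})=\ell_{\alpha}(X_{0})$ (valid since $\alpha\in\mathcal{S}(X_{0})$), one gets $t_{n}\ell_{\gamma_{n}}(X_{0})\to\ell_{\alpha}(X_{0})$.

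First I would construct $\gamma_{0}$. Since $\alpha\in\mathcal{S}^{int}(X_{0})$ is essential and $X_{0}$ is connected with nonempty boundary, I join a point of $\partial X_{0}$ to a point $q\in\alpha$ by a simple arc meeting $\alpha$ only at $q$ (truncate a generic path at its last intersection with $\alpha$). If the component of $X_{0}\setminus\alpha$ abutting the opposite side of $\alpha$ at $q$ meets $\partial X_{0}$ --- in particular whenever $\alpha$ is non-separating, since then $X_{0}\setminus\alpha$ is connected --- one continues to $\partial X_{0}$ on that side and obtains a simple arc with $i(\gamma_{0},\alpha)=1$. Otherwise $\alpha$ separates off a component $R$ whose only boundary is $\alpha$; as $\alpha$ is essential and non-peripheral, $R$ is neither a disk nor an annulus, so it carries an essential simple arc with both endpoints on $\alpha$. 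Routing $\gamma_{0}$ from $\partial X_{0}$ across $\alpha$ into $R$, along such an arc, back across $\alpha$, and out to $\partial X_{0}$ then yields a simple arc with $i(\gamma_{0},\alpha)=2$. In either case $\gamma_{0}\in\mathcal{A}'(X_{0})$ with $k=i(\gamma_{0},\alpha)>0$.

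Because $\alpha$ is interior, $T_{\alpha}$ may be taken supported in a collar annulus of $\alpha$ disjoint from $\partial X_{0}$, so each $\gamma_{n}=T_{\alpha}^{\,n}(\gamma_{0})$ is again an essential simple arc with the same endpoints, i.e. $\gamma_{n}\in\mathcal{A}'(X_{0})$. To verify convergence I would invoke the standard Dehn-twist estimate from the theory of measured laminations: for every test object $\beta\in\mathcal{A}'(X_{0})\cup\mathcal{S}(X_{0})$ there is a constant $C_{\beta}$, independent of $n$, with
\begin{equation*}
\bigl|\, i(T_{\alpha}^{\,n}\gamma_{0},\beta)-nk\,i(\alpha,\beta)\,\bigr|\le C_{\beta}.
\end{equation*}
Dividing by $nk$ gives $i(t_{n}\gamma_{n},\beta)\to i(\alpha,\beta)$ for all such $\beta$, which is precisely the convergence $t_{n}\gamma_{n}\to\alpha$ in $\mathcal{ML}(X_{0})$. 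Hence $\alpha\in\mathcal{ML}^{\mathcal{A}'}(X_{0})$, and combining the definition $\mathcal{L}_{\alpha}(X_{0})=\lim_{n\to\infty}t_{n}\ell_{\gamma_{n}}(X_{0})$ with $\mathcal{L}_{\alpha}(X_{0})=\ell_{\alpha}(X_{0})$ yields the desired approximation. (Alternatively, the same limit follows directly from the geodesic-length asymptotic $\ell_{\gamma_{n}}(X_{0})=nk\,\ell_{\alpha}(X_{0})+O(1)$, the geodesic representative of $\gamma_{n}$ spiraling about $\alpha$ roughly $nk$ times inside a compact region.)

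The point requiring the most care, and the main obstacle, is transferring the intersection-number and length asymptotics from the finite-type setting to a surface of infinite type, while checking convergence against the entire test family $\mathcal{A}'(X_{0})\cup\mathcal{S}(X_{0})$ rather than finitely many objects. This is resolved by locality: for each fixed $\beta$, the number $i(\gamma_{n},\beta)$ and the geodesic arc realizing $\gamma_{n}$ are both confined to a finite-type subsurface containing $\alpha\cup\gamma_{0}\cup\beta$ together with the collar of $\alpha$ supporting $T_{\alpha}$, and there the classical estimates apply uniformly in $n$. A minor secondary issue, the existence of a crossing arc $\gamma_{0}$ when $\alpha$ is separating, is handled by the routing through the cut-off piece $R$ described above.
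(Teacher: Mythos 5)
Your proposal is correct and follows essentially the same route as the paper: produce a simple arc $\gamma_{0}$ crossing $\alpha$ once (non-separating case) or twice (separating case), apply powers of the Dehn twist $T_{\alpha}$ with weights $1/(n\,i(\gamma_{0},\alpha))$, and pass to the limit in $\mathcal{ML}(X_{0})$ to get length convergence. The only difference is that you substantiate the convergence $t_{n}\gamma_{n}\to\alpha$ (which the paper dismisses as obvious) by the standard twist-intersection estimate together with a locality argument for the infinite-type setting, which is a welcome strengthening rather than a departure.
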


  \begin{proof}
   It is equivalent to show that $\mathcal{S}^{int}(X_{0})\subset\mathcal{ML}^{\mathcal{A}'}(X_{0})$.
   Note that $X_{0}$ has at least one geodesic boundary component, then for any $\alpha\in\mathcal{S}^{int}(X_{0})$, we can find a geodesic arc $\gamma\in\mathcal{A}'(X_{0})$ that essentially intersects $\alpha$ in one or two points.
   See Figure \ref{fig:separable} (resp. Figure \ref{fig:non-separable}) for an example of $\gamma$ corresponding to a separable (resp. non-separable) interior simple closed curve $\alpha$.

\begin{figure}[ht]
\begin{tikzpicture}[scale=1.10]
\draw (1.523,0.137) .. controls (1.373,0.4415) and (2.65,0.878) .. (2.8,0.5735);
\draw (1.523,0.137) .. controls (1.6775,-0.172) and (2.95,0.2645) .. (2.8,0.5735);
\draw (1.523,0.137) .. controls (2.014,-1.045) and (2.0925,-1.7385) .. (1.388,-2.904);
\draw(2.8,0.5735) .. controls (3.1455,-0.1265) and (4.2545,-0.17) .. (4.5955,0.489);
\draw[dashed] (4.5955,0.489) .. controls (4.7,0.8435) and (5.977,0.55) .. (5.8725,0.1955);
\draw (4.5955,0.489) .. controls (4.491,0.1845) and (5.7725,-0.109) .. (5.8725,0.1955);
\draw (5.8725,0.1955) .. controls (5.527,-0.982) and (5.5915,-2.094) .. (6.1705,-2.904);
\draw (2.6785,-3.2955) .. controls (3.0285,-1.9995) and (4.4735,-1.9995) .. (4.8215,-3.2955);
\draw[red][dashed] (3.7455,0.03) .. controls (3.389,0.03) and (3.389,-2.317) .. (3.7455,-2.3235);
\draw [red](3.7455,0.03) .. controls (4.102,0.03) and (4.102,-2.317) .. (3.7455,-2.3235);
\node at (3.7905,-1.7865) {$\alpha$};
\node at (1.915,1.0495) {$\beta_{i}$};
\node at (4.6555,-0.805) {$\gamma$};
\draw (1.688,-2.352) .. controls (1.623,-2.678) and (2.8895,-2.9605) .. (2.9545,-2.678);
\draw[dashed] (1.688,-2.352) .. controls (1.753,-2.0695) and (3.0195,-2.352) .. (2.9545,-2.678);
\draw (4.5455,-2.7215) .. controls (4.676,-3.0255) and (5.99,-2.725) .. (5.8595,-2.3775);
\draw[dashed] (4.5455,-2.7215) .. controls (4.4585,-2.439) and (5.7945,-2.117) .. (5.8595,-2.3995);
\draw[dotted, ultra thick] (2.091,-2.8715) -- (2.004,-3.154);
\draw [dotted, ultra thick](5.403,-2.9605) -- (5.555,-3.1995);
\draw (4.591,0.5) .. controls (4.682,0.727) and (4.7275,0.954) .. (4.6365,1.1355);
\draw (5.8635,0.182) .. controls (5.9545,0.4545) and (6,0.636) .. (6.2725,0.7725);
\draw[red] (2.364,0.1365) .. controls (2.5,-0.0905) and (5.6825,-2.2735) .. (5.7725,-2.046);
\draw [red][dashed](2.3185,0.6365) .. controls (2.5,0.4095) and (5.865,-1.91) .. (5.7725,-2.0455);
\draw [red](2.2275,0.091) -- (2.318,-0.045) -- (2.454,0.0455);
\draw[red] (2.182,0.591) -- (2.273,0.4545) -- (2.409,0.5455);
\draw [dotted, ultra thick] (5.5455,1) -- (5.409,0.682);
\end{tikzpicture}
\caption{\small{An example of $\gamma$ corresponding to $\alpha$ (separable), where the complement of $\alpha$ in $X_{0}$ is disconnected.}}
            \label{fig:separable}
		\end{figure}
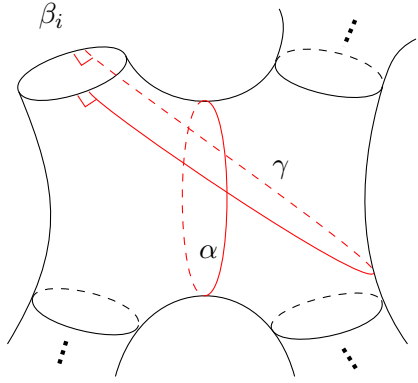

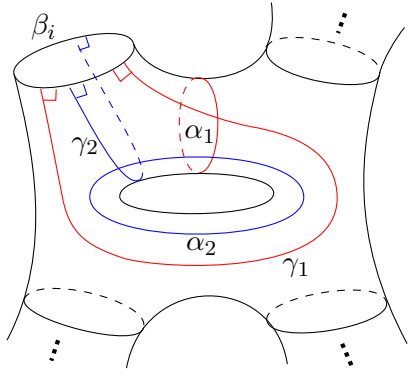
\begin{figure}[ht]
\begin{tikzpicture}[scale=0.9]
\draw (1.0556,0.1056) .. controls (0.9556,0.4556) and (2.7026,0.9582) .. (2.8026,0.5526);
\draw (1.0556,0.1056) .. controls (1.2,-0.4) and (2.9526,0.1526) .. (2.8026,0.5526);
\draw (1.0556,0.1056) .. controls (1.3276,-0.9776) and (1.6526,-2.4846) .. (0.9526,-3.8164);
\draw (2.8026,0.5526) .. controls (3.0526,-0.0974) and (4.4526,-0.15) .. (4.7526,0.5);
\draw[dashed] (4.7526,0.5) .. controls (4.85,0.85) and (6.4974,0.5086) .. (6.3974,0.103);
\draw (4.7526,0.5) .. controls (4.6026,0.0474) and (6.3526,-0.2444) .. (6.3974,0.103);
\draw (6.3974,0.103) .. controls (6.1082,-1.0026) and (5.95,-2.2664) .. (6.8,-3.8664);
\draw (2.6,-1.6992) .. controls (2.6,-1.2548) and (4.85,-1.2548) .. (4.85,-1.6492);
\draw[red][dashed] (3.7526,0.0526) .. controls (3.3526,0.0526) and (3.4026,-1.3548) .. (3.7526,-1.3492);
\draw [red](3.7526,0.0526) .. controls (4.1526,0.0526) and (4.1026,-1.3548) .. (3.7526,-1.3492);
\node at (3.774,-0.7812) {$\alpha_{1}$};
\node at (1.4922,0.7974) {$\beta_{i}$};
\node at (5.1848,-2.7502) {$\gamma_{1}$};
\draw (2.6,-1.6992) .. controls (2.6,-2.0436) and (4.85,-2.0436) .. (4.85,-1.6492);
\draw (2.6996,-4.2272) .. controls (2.9526,-2.8164) and (4.85,-2.819) .. (5.05,-4.169);
\draw[blue] (2.163,-1.7022) .. controls (2.163,-0.92) and (5.2896,-0.9174) .. (5.2896,-1.7078);
\draw[blue] (2.163,-1.7022) .. controls (2.163,-2.4362) and (5.2896,-2.4336) .. (5.2896,-1.7078);
\draw[blue] (1.8702,-0.097) .. controls (2.1284,-0.5604) and (2.6384,-1.6022) .. (2.8918,-1.4518);
\draw[blue][dashed] (2.8918,-1.4518) .. controls (3.1422,-1.3044) and (2.1598,0.0948) .. (1.9572,0.606);
\draw [blue](2.0706,-0.044) -- (2.1206,-0.197) -- (1.9702,-0.2444);
\draw [blue](2.1046,0.6586) -- (2.1546,0.5086) -- (2.0046,0.4586);
\node at (3.8012,-2.433) {$\alpha_{2}$};
\node at (2.099,-0.9596) {$\gamma_{2}$};
\draw (1.2026,-3.2164) .. controls (1.1496,-3.5164) and (2.8996,-3.9746) .. (2.9496,-3.6246);
\draw[dashed] (1.2026,-3.2164) .. controls (1.3552,-2.8164) and (3.0496,-3.2246) .. (2.9496,-3.6246);
\draw (4.75,-3.519) .. controls (4.8,-3.919) and (6.55,-3.6164) .. (6.5,-3.2664);
\draw [dashed](4.75,-3.519) .. controls (4.7,-3.169) and (6.45,-2.8664) .. (6.5,-3.2664);
\draw[dotted,ultra thick] (1.7,-3.8164) -- (1.6,-4.1164);
\draw[dotted,ultra thick] (5.85,-3.869) -- (6,-4.219);
\draw[red] (2.503,0.1608) -- (2.6112,-0.003) -- (2.784,0.1402);
\draw[red] (1.672,-0.1206) -- (1.666,-0.2814) -- (1.4786,-0.2784);
\draw (4.737,0.5) .. controls (4.8422,0.6844) and (4.8422,0.974) .. (4.8156,1.1584);
\draw (6.3948,0.1082) .. controls (6.4474,0.3192) and (6.6318,0.6348) .. (6.7636,0.7926);
\draw[dotted,ultra thick] (5.8948,1) -- (5.7896,0.7104);
\draw [red](2.6668,0.278) .. controls (2.8892,-0.1668) and (3.9436,-0.5552) .. (4.4992,-0.6656);
\draw [red](1.4444,-0.1112) .. controls (1.556,-0.6104) and (1.666,-1.2208) .. (1.7764,-1.7764);
\draw [red](4.5,-0.6668) .. controls (5.1668,-0.778) and (5.778,-1.056) .. (5.778,-1.7224);
\draw [red](2.6676,-2.6116) .. controls (3.2788,-2.778) and (5.7792,-2.8888) .. (5.7776,-1.7224);
\draw[red] (1.7776,-1.778) .. controls (1.8888,-2.278) and (2.1664,-2.444) .. (2.666,-2.6108);
\end{tikzpicture}
\caption{\small{An example of $\gamma_{i}$ corresponding to $\alpha_{i}$ (non-separable).}}
            \label{fig:non-separable}
		\end{figure}

    Let $\gamma_{n}$ be the weighted geodesic arc obtained by taking a power n of a positive Dehn-twist along $\alpha$ with the weight $1/(i(\gamma, \alpha)\,n)$ on $\gamma$. It is obvious that  $\{\frac{1}{i(\gamma, \alpha)\,n}\gamma_{n}\}^{\infty}_{n=1}$ converges to $\alpha$ in $\mathcal{ML}(X_{0})$, hence $\ell_{\alpha}(X_{0})=
    \lim\limits_{n\rightarrow\infty}\frac{1}{i(\gamma, \alpha)\, n}\ell_{\gamma_{n}}(X_{0})$
    and we obtain that  $\mathcal{S}^{int}(X_{0})\subset\mathcal{ML}^{\mathcal{A}'}(X_{0})$.
   \end{proof}




   \begin{theorem}\label{Thurston metric}
   Let $X_{0}$ be a complete hyperbolic surface of infinite type with boundary which satisfies the geometric condition $(\star)$. Then the two functions $d$ and $\overline{d}$ on $\mathcal{T}(X_{0})\times \mathcal{T}(X_{0})$ are asymmetric metrics.
   \end{theorem}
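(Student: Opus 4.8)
The plan is to verify the three axioms of Definition \ref{def of asymmetric metric} for $d$, together with the fact that $d$ takes values in $[0,+\infty)$; the corresponding statements for $\overline{d}$ then come for free, since $\overline{d}(X,Y)=d(Y,X)$ by definition, so every property of $\overline{d}$ is the same property of $d$ with its two arguments interchanged. Thus I would concentrate entirely on $d$.

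First I would dispose of the easy parts. Finiteness is immediate from Lemma \ref{Wolpert's inequality in our case}: if $f=f_2\circ f_1^{-1}$ has maximal dilation $K$, then $\ell_{\alpha}(Y)/\ell_{\alpha}(X)\le K$ for every $\alpha\in\mathcal{A}(X_0)\cup\mathcal{S}(X_0)$, so $d(X,Y)\le\log K<\infty$. The triangle inequality is formal: for each $\alpha$ one writes $\ell_{\alpha}(Z)/\ell_{\alpha}(X)=(\ell_{\alpha}(Z)/\ell_{\alpha}(Y))\,(\ell_{\alpha}(Y)/\ell_{\alpha}(X))$, bounds each factor by its supremum, and takes logarithms, giving $d(X,Z)\le d(X,Y)+d(Y,Z)$. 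For non-negativity I would use the Basmajian identity, which by Corollary \ref{Basmajina and McShane} and Proposition \ref{Basmajian identity theorem} is available on all of $\mathcal{T}(X_0)$ precisely because $X_0$ satisfies $(\star)$. Each boundary geodesic $\beta_j$ lies in $\mathcal{S}(X_0)$, and $\ell_{\beta_j}(\cdot)=\sum_i 2\log\coth(d_i^{j}(\cdot)/2)$ runs over the $0$-th orthogonal spectrum, whose terms are arcs in $\mathcal{A}(X_0)$. Since $x\mapsto 2\log\coth(x/2)$ is strictly decreasing, if every ratio $\ell_{\alpha}(Y)/\ell_{\alpha}(X)$ were $<1$ then $d_i^{j}(Y)<d_i^{j}(X)$ for all $i$, forcing $\ell_{\beta_j}(Y)>\ell_{\beta_j}(X)$ and hence a ratio exceeding $1$ at $\beta_j$, a contradiction; therefore $d(X,Y)\ge 0$. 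The asymmetry axiom only requires one pair $X,Y$ with $d(X,Y)\ne d(Y,X)$, which I would obtain by deforming $X_0$ inside a fixed embedded finite-type subsurface with boundary and invoking the known asymmetry of the arc metric in finite type \cite{LPST,LTBoundary,PG2010}, the competing suprema being governed by that subsurface.

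The separation axiom is the genuine obstacle. The implication $X=Y\Rightarrow d(X,Y)=0$ is trivial. For the converse, suppose $d(X,Y)=0$, so $\ell_{\alpha}(Y)\le\ell_{\alpha}(X)$ for all $\alpha\in\mathcal{A}(X_0)\cup\mathcal{S}(X_0)$. Applying the Basmajian identity to $Y$ and comparing it termwise with $X$ gives $\ell_{\beta_j}(Y)\ge\ell_{\beta_j}(X)$, whence $\ell_{\beta_j}(Y)=\ell_{\beta_j}(X)$ for every boundary component. As $2\log\coth(\cdot/2)$ is strictly decreasing and the two convergent series have equal sums with termwise-comparable entries, equality of the sums forces $d_i^{j}(Y)=d_i^{j}(X)$ for every orthogeodesic. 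The key observation is that the $0$-th orthogonal spectrum from $\beta_j$ contains the geodesic representative of every essential simple arc issuing from $\beta_j$ (the interior of a simple arc stays in the interior of $X$, hence crosses $\mathcal{B}(X)$ zero times), so ranging over all $j$ yields $\ell_{\alpha}(Y)=\ell_{\alpha}(X)$ for all $\alpha\in\mathcal{A}'(X_0)$. Lemma \ref{lemma:dense}, applied to both $X$ and $Y$, then propagates this to interior curves: for $c\in\mathcal{S}^{int}(X_0)$ one has $\ell_{c}=\lim_{n} t_n\,\ell_{\gamma_n}$ with $\gamma_n\in\mathcal{A}'(X_0)$ and weights $t_n$, and since the arcs $\gamma_n$ and the weights are purely topological the limits for $X$ and $Y$ coincide, so $\ell_{c}(Y)=\ell_{c}(X)$. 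At this stage the marked lengths of all interior simple closed curves and all boundary components agree; reading these off the Fenchel--Nielsen coordinates associated with the countable pants decomposition (the pants-curve lengths, and the twist parameters via suitable transverse simple closed curves) shows that $X$ and $Y$ have identical coordinates, hence $X=Y$ in $\mathcal{T}(X_0)$.

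Combining these four verifications shows that $d$ is an asymmetric metric, and via $\overline{d}(X,Y)=d(Y,X)$ the same conclusion holds for $\overline{d}$. The step I expect to be hardest is the separation axiom, specifically the passage from ``all lengths non-increasing'' to honest equality: the decisive inputs are that the Basmajian identity holds throughout $\mathcal{T}(X_0)$ (this is exactly where $(\star)$ enters, through Corollary \ref{Basmajina and McShane}) and that its $0$-th orthogonal spectrum already sees every simple arc, after which Lemma \ref{lemma:dense} and length-spectrum rigidity finish the argument.
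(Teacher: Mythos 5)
Your treatment of the main axioms coincides with the paper's own proof. The paper likewise reduces everything to $d$ via $\overline{d}(X,Y)=d(Y,X)$, gets finiteness from Lemma \ref{Wolpert's inequality in our case}, treats the triangle inequality as formal, and proves the separation axiom by exactly your chain: length non-increase on $\mathcal{B}(X_0)$ and on the $0$-th orthogonal spectra, the Basmajian identity \eqref{Basmajian identity} (available on all of $\mathcal{T}(X_0)$ by Corollary \ref{Basmajina and McShane}) together with strict monotonicity of $\log\coth(x/2)$ forcing termwise equality $d_i^j(Y)=d_i^j(X)$, the identification of $\cup_j\{d_i^j\}$ with the lengths of homotopy classes of arcs, Lemma \ref{lemma:dense} to propagate equality of lengths to $\mathcal{S}^{int}(X_0)$, and Fenchel--Nielsen rigidity (twists recovered from lengths of transverse curves and their Dehn-twist images). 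Your separate Basmajian-based argument for $d\geq 0$ is, if anything, more careful than the paper's one-line appeal to Wolpert's inequality, which by itself only bounds $d$ below by $-\log K$.

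The genuine gap is the asymmetry axiom. You propose to deform $X_0$ inside a fixed finite-type subsurface $S$ and invoke the known finite-type asymmetry, ``the competing suprema being governed by that subsurface.'' That last clause is precisely what has to be proved, and it does not follow from the finite-type results: the supremum defining $d$ runs over all of $\mathcal{A}(X_0)\cup\mathcal{S}(X_0)$, including curves and arcs that enter and leave $S$, and conversely the finite-type arc metric on $\mathcal{T}(S)$ is a supremum over arcs ending on $\partial S$, which are not elements of $\mathcal{A}(X_0)$ when components of $\partial S$ are interior curves of $X_0$; so an inequality between the two finite-type distances on $\mathcal{T}(S)$ transfers neither formally nor obviously to $d$. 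The paper fills exactly this hole with a concrete Thurston-style construction: an embedded X-piece whose relevant boundary curves $\beta_1,\beta_2,\beta_3$ lie on $\partial X_0$, all four boundary lengths equal with $\sinh(\ell/2)=1$, zero twist and $\ell_{\alpha_1}(X)$ small, deformed only by contracting $\alpha_1$; the pentagon/hexagon identities \eqref{eq:d(X,Y)} then yield the comparison \eqref{properties for Y}, which pins the two suprema to the specific curves $\alpha_2$ and $\alpha_1$, after which $d(X,Y)\neq d(Y,X)$ follows from an explicit estimate. To complete your proof you would need an analogous quantitative control showing that no curve or arc of the ambient infinite-type surface beats the ratios realized inside $S$ in either direction.
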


    \begin{proof}

    By Lemma \ref{Wolpert's inequality in our case}, $d$ and $\bar{d}$ are valued in $[0,+\infty)$. Observe that $\overline{d}(X,Y)=d(Y,X)$, for all $X,Y\in\mathcal{T}(X_{0})$, it suffices to consider $d$. Note that the triangle inequality naturally holds for $d$. Now we prove the separation axiom for $d$ by showing that if $X\not = Y\in \mathcal{T}(X_{0})$, then $d(X,Y)>0$.

    Assume that $d(X,Y)\leq 0$, then $\ell_{\alpha}(Y)\leq \ell_{\alpha}(X)$
   for all $\alpha\in \mathcal{S}(X_{0})\cup \mathcal{A}(X_{0})$. In particular, we have that
   \begin{equation}\label{eq:B(X_{0})}
   \ell_{\beta_{j}}(Y)\leq\ell_{\beta_{j}}(X),
   \end{equation}
   for all $\beta_{j}\in\mathcal{B}(X_{0})$.

   Moreover, for each $j\in\mathbb{N}$, we have that
   \begin{equation}\label{eq:d(j,i)}
   d^{j}_{i}(Y)\leq d^{j}_{i}(X),
   \end{equation}
   for all $i\in\mathbb{N}$.
  Here $\{d_{i}^{j}(X)\}^{\infty}_{i=1}$ denotes the 0-th orthogonal spectrum of $X$ related to $\beta_{j}(X)$ and $\mathcal{B}(X)$, and
   $\{d_{i}^{j}(Y)\}^{\infty}_{i=1}$ denotes the 0-th orthogonal spectrum of $Y$ related to $\beta_{j}(Y)$ and $\mathcal{B}(Y)$, where $\beta_{j}(X)$ (resp. $\beta_{j}(Y)$) is the corresponding geodesic boundary component on $X$ (resp. $Y$).

   By Corollary \ref{Basmajina and McShane} and the Basmajian identity \eqref{Basmajian identity}, it follows that for each $j\in\mathbb{N}$ and each $X'\in\mathcal{T}(X_0)$,
   \begin{equation}\label{eq:Basmajian application}
   \ell_{\beta_{j}}(X')=\sum_{i=1}^{\infty} 2\log \coth(\frac{d_{i}^{j}(X')}{2}).
   \end{equation}

   By \eqref{eq:d(j,i)}, \eqref{eq:Basmajian application} and the monotonically decreasing of the function $\log\coth({\frac{x}{2}})$,
    \begin{equation}\label{eq:geq}
       \ell_{\beta_{j}}(Y)=\sum_{i=1}^{\infty} 2\log \coth(\frac{d_{i}^{j}(Y)}{2}) \geq
       \ell_{\beta_{j}}(X)=\sum_{i=1}^{\infty} 2\log \coth(\frac{d_{i}^{j}(X)}{2}).
   \end{equation}

      Since the geodesic arc which minimizes the lengths of all the arcs in a given homotopy class is unique and hits the boundary perpendicularly, then there is a bijection between $\mathcal{A}(X_{0})$ and the set of geodesic arcs (possibly with self-intersections) in $X_{0}$ which are orthogonal to $\partial X_{0}$ at their endpoints. Therefore,
      \begin{equation}\label{arc and homotopy}
      \begin{split}
   \cup_{j}\{d_{i}^{j}(X)\}=\{\ell_{\alpha}(X): \alpha\in\mathcal{A}(X_{0})\}, \\
    \cup_{j}\{d_{i}^{j}(Y)\}=\{\ell_{\alpha}(Y): \alpha\in\mathcal{A}(X_{0})\}.
    \end{split}
       \end{equation}
    Combining \eqref{eq:B(X_{0})} \eqref{eq:geq} and \eqref{arc and homotopy}, we have
    \begin{equation}\label{eq:equal}
    \ell_{\alpha}(Y)=\ell_{\alpha}(X),
    \end{equation}
     for all $\alpha\in \mathcal{B}(X_{0})\cup \mathcal{A}(X_{0})$.

     Note that $X_{0}$ admits a countable pair of pants decomposition $\mathcal{P}=\{C_{i}\}^{\infty}_{i=1}$, then $X_{0}$ can be parameterized by the Fenchel-Nielsen coordinates with respect to $\mathcal{P}$ (see \cite{ALPSS}). Moreover, for each interior simple closed geodesic $C_{i}$ in $\mathcal{P}$, the twisting parameter can be uniquely determined by the length of the shortest simple closed geodesic $\gamma_{i}$ which intersects $C_{i}$ and the length of the geodesic $T_{C_{i}}(\gamma_{i})$ obtained by taking a positive Dehn-twist along $C_{i}$ on $\gamma_{i}$.

     By Lemma \ref{lemma:dense}, the length of an interior simple closed geodesic can be approximated by a sequence of lengths of weighted geodesic arcs $\gamma_{n}\in \mathcal{A}(X_{0})$. From this and \eqref{eq:equal}, we have $\ell_{\alpha}(Y)=\ell_{\alpha}(X)$ for all $\alpha\in\mathcal{S}(X_{0})\cup\mathcal{A}(X_{0})$. Then $X$ and $Y$ have the same Fenchel-Nielsen coordinates and hence $X=Y$, which implies the assumption is false.

     The asymmetric condition of $d$ can be deduced from the example constructed by Thurston (see \cite{Thurston1998}). Let $X$ be a complete hyperbolic surface of infinite type which satisfies the geometric condition $(\star)$ and the following conditions:

     (1) $X$ contains an embedded hyperbolic X-piece $S$ (that is, a hyperbolic surface whose interior is homeomorphic to a sphere with four disjoint closed disks removed) with four geodesic boundary components $\beta_{1}, \beta_{2}, \beta_{3}, \beta_{4}$ of the same length $l$ satisfying $\sinh{\frac{l}{2}}=1$.

     (2) Let $\gamma_{1}$ (resp. $\gamma_{2}$) be the shortest geodesic arc connecting $\beta_{1}$ and $\beta_{2}$ (resp. $\beta_{2}$ and $\beta_{3}$). Denote by $\alpha_{1}$ (resp. $\alpha_{2}$) the third boundary component of the hyperbolic pair of pants determined by $\beta_{1}$, $\beta_{2}$ and $\gamma_{1}$ (resp. $\beta_{2}$, $\beta_{3}$ and $\gamma_{2}$). We choose $X$ such that $\ell_{\alpha_{1}}(X)$ is sufficiently small and the twisting parameter of $\alpha_{1}$ is zero, as indicated in Figure \ref{fig:X}.

     \begin{figure}[ht]
 \begin{tikzpicture}[scale=0.9]
\draw (-2.25,-0.7) .. controls (-2.4,-0.35) and (-1.35,-0.05) .. (-1.2,-0.35);
\draw (-2.25,-0.7) .. controls (-2.1,-1.05) and (-1.05,-0.7) .. (-1.2,-0.35);
\draw[red] (-2.25,-0.7) .. controls (-1.8,-1.5) and (-1.8,-1.8) .. (-2.2,-2.3);
\draw (-2.2,-2.3) .. controls (-2.4,-2.55) and (-1.45,-2.95) .. (-1.25,-2.65);
\draw [dashed] (-2.2,-2.3) .. controls (-1.95,-1.95) and (-1.05,-2.4) .. (-1.25,-2.65);
\draw [red](-1.2,-0.35) .. controls (-0.4,-1.6) and (5.9,-1.6) .. (6.5,-0.3);
\draw (6.5,-0.3);
\draw (6.5,-0.3) .. controls (6.75,0) and (7.6,-0.45) .. (7.45,-0.7);
\draw (6.5,-0.3) .. controls (6.3,-0.65) and (7.25,-1.05) .. (7.45,-0.7);
\draw(7.45,-0.7) .. controls (7.05,-1.45) and (7.1,-1.65) .. (7.5,-2.05);
\draw (-1.25,-2.65) .. controls (-0.4,-1.6) and (5.9,-1.55) .. (6.45,-2.5);
\draw (6.45,-2.5) .. controls (6.6,-2.8) and (7.65,-2.4) .. (7.5,-2.05);
\draw[dashed] (6.45,-2.5) .. controls (6.35,-2.2) and (7.35,-1.7) .. (7.5,-2.05);
\draw (6.45,-2.5) .. controls (6.95,-2.95) and (7.1,-3.25) .. (7,-3.6);
\draw (7.5,-2.05) .. controls (8,-2.7) and (8.7,-2.7) .. (9.35,-2.3);
\draw[dotted,  ultra thick] (7.65,-3.05) -- (7.9,-3.3);
\draw[blue] (2.5556,-1.3) .. controls (2.4668,-1.3) and (2.4668,-1.85) .. (2.5556,-1.85);
\draw[blue][dashed] (2.5556,-1.3) .. controls (2.6444,-1.3) and (2.6444,-1.85) .. (2.5556,-1.85);
\node at (-2,-3.1108) {$\beta_{1}$};
\node at (-1.8888,0.1108) {$\beta_{2}$};
\node at (7.278,0.222) {$\beta_{3}$};
\node at (6.0556,-2.8388) {$\beta_{4}$};
\node at (2.6112,-2.218) {$\alpha_{1}$};
\node at (8.4936,-1.4988) {$X$};
\draw[blue] (-1.8888,-1.5528) .. controls (-1.8888,-1.7192) and (7.1668,-1.7188) .. (7.1668,-1.5524);
\draw[blue][dashed] (-1.8888,-1.5528) .. controls (-1.8888,-1.3864) and (7.1668,-1.3864) .. (7.1668,-1.5528);
\node at (-0.2224,-1.8888) {$\alpha_{2}$};
\node at (0.9444,-0.778) {$\gamma_{2}$};
\node at (-2.3332,-1.2776) {$\gamma_{1}$};
\draw[red] (-1.222,-0.5556) -- (-1.1108,-0.6664) -- (-1.0556,-0.4996);
\draw [red](6.3336,-0.5) -- (6.3888,-0.6664) -- (6.5,-0.5556);
\draw[red] (-2,-0.8332) -- (-1.9448,-1) -- (-2.0556,-0.9996);
\draw[red](-2.1112,-2.278) -- (-2.056,-2.3332) -- (-2.1668,-2.4444);
\end{tikzpicture}
\caption{\small{A hyperbolic surface $X$ of infinite type which satisfies the geometric condition $(\star)$ and the conditions (1) and (2).}}
            \label{fig:X}
		\end{figure}
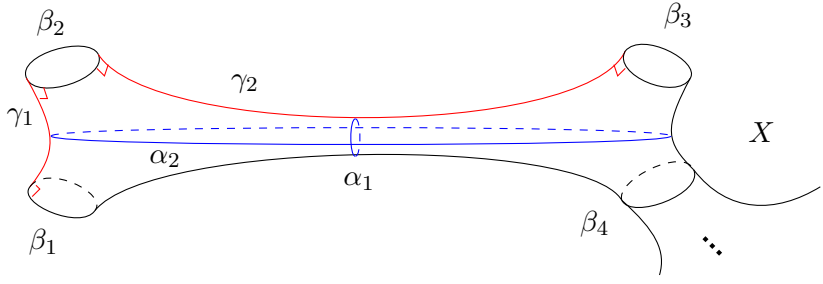

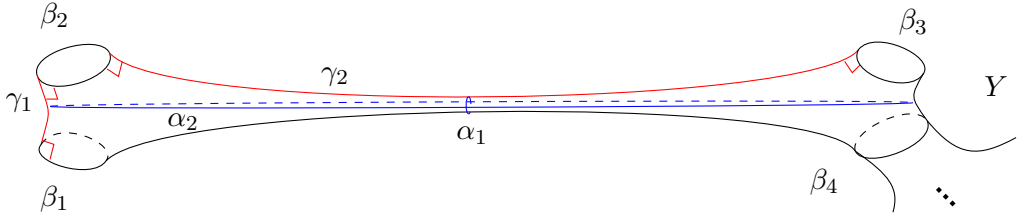
\begin{figure}[ht]
    \begin{tikzpicture}[scale=0.9]
\draw (-3.85,-1) .. controls (-4,-0.65) and (-2.95,-0.35) .. (-2.8,-0.65);
\draw (-3.85,-1) .. controls (-3.7,-1.35) and (-2.65,-1) .. (-2.8,-0.65);
\draw[red] (-3.85,-1) .. controls (-3.65,-1.6) and (-3.65,-1.45) .. (-3.8,-2.0056);
\draw (-3.8,-2.0056) .. controls (-4,-2.2556) and (-3.05,-2.5444) .. (-2.85,-2.2444);
\draw [dashed] (-3.8,-2.0056) .. controls (-3.55,-1.6) and (-2.65,-1.9944) .. (-2.85,-2.2444);
\draw[red](-2.8,-0.65) .. controls (-2,-1.6) and (7.55,-1.45) .. (8.15,-0.6);
\draw (8.15,-0.6);
\draw (8.15,-0.6) .. controls (8.4,-0.3) and (9.25,-0.7) .. (9.1,-0.95);
\draw (8.15,-0.6) .. controls (7.95,-0.95) and (8.9,-1.3) .. (9.1,-0.95);
\draw (9.1,-0.95) .. controls (8.9,-1.3) and (8.9,-1.3) .. (9.15,-1.65);
\draw (-2.85,-2.2444) .. controls (-2,-1.35) and (7.55,-1.25) .. (8.1,-2.1);
\draw (8.1,-2.1) .. controls (8.25,-2.4) and (9.3,-2) .. (9.15,-1.65);
\draw[dashed] (8.1,-2.1) .. controls (8,-1.8) and (8.9444,-1.35) .. (9.15,-1.65);
\draw (8.1,-2.1) .. controls (8.6,-2.45) and (8.75,-2.65) .. (8.65,-3);
\draw (9.15,-1.65) .. controls (9.6,-2.2) and (9.9,-2.2) .. (10.45,-1.9);
\draw[dotted,  ultra thick] (9.3,-2.65) -- (9.55,-2.9);
\draw[blue] (2.45,-1.3) .. controls (2.4,-1.3) and (2.4,-1.55) .. (2.45,-1.55);
\draw[blue][dashed] (2.45,-1.3) .. controls (2.4944,-1.3) and (2.4944,-1.55) .. (2.45,-1.55);
\node at (-3.6,-2.8) {$\beta_{1}$};
\node at (-3.6,-0.1) {$\beta_{2}$};
\node at (8.95,-0.2) {$\beta_{3}$};
\node at (7.65,-2.55) {$\beta_{4}$};
\node at (2.5,-1.8676) {$\alpha_{1}$};
\node at (10.1616,-1.1488) {$Y$};
\draw[blue][dashed] (-3.6668,-1.4412) .. controls (-3.6668,-1.386) and (8.9448,-1.3304) .. (8.9448,-1.3856);
\draw [blue](-3.6668,-1.4412) .. controls (-3.6668,-1.4964) and (8.9448,-1.4408) .. (8.9448,-1.3856);
\node at (-1.7224,-1.6668) {$\alpha_{2}$};
\node at (0.5,-1) {$\gamma_{2}$};
\node at (-4.1112,-1.3888) {$\gamma_{1}$};
\draw[red] (-2.8332,-0.8888) -- (-2.6668,-1) -- (-2.6116,-0.778);
\draw [red](-3.778,-1.9444) -- (-3.6112,-2) -- (-3.6668,-2.222);
\draw [red](-3.6112,-1.1668) -- (-3.5556,-1.3332) -- (-3.7216,-1.3332);
\draw [red](7.9448,-0.7776) -- (8.0556,-0.9444) -- (8.1664,-0.8332);
\end{tikzpicture}
\caption{\small{A deformed surface $Y$ of $X$.}}
            \label{fig:Y}
		\end{figure}

     Note that $X$ satisfies the geometric condition $(\star)$. And such a surface $X$ always exists. Now we deform $X$  by contracting the length of $\alpha_{1}$ appropriately on $X$, while keeping the lengths of $\beta_{i}$ ($i=1,2,3,4$) and the hyperbolic structure of the complement of $S$ in $X$ unchanged. Denote by $Y$ the obtained surface, as presented in Figure \ref{fig:Y}. It is not hard to see that $d(X,Y)\not= d(Y,X)$ if we can choose $Y$ with the contracted length $\ell_{\alpha_{1}}(Y)$ satisfying
     \begin{equation}\label{properties for Y}
     \frac{\ell_{\gamma_{2}}(Y)}{\ell_{\gamma_{2}}(X)}
     \leq\frac{\ell_{\alpha_{2}}(Y)}{\ell_{\alpha_{2}}(X)},
   \hspace{1cm}
     \frac{\ell_{\gamma_{1}}(X)}{\ell_{\gamma_{1}}(Y)}
     \leq\frac{\ell_{\alpha_{1}}(X)}{\ell_{\alpha_{1}}(Y)}.
     \end{equation}
     Indeed, if $Y$ satisfies the property \eqref{properties for Y}, we have
     \begin{equation*}
     \begin{split}
     d(X,Y)&=\log\sup_{\alpha\in\mathcal{S}(X_{0})\cup\mathcal{A}(X_{0})}
     \frac{\ell_{\alpha}(Y)}{\ell_{\alpha}(X)}
     =\frac{\ell_{\alpha_{2}}(Y)}{\ell_{\alpha_{2}}(X)},\\
       d(Y,X)&=\log\sup_{\alpha\in\mathcal{S}(X_{0})\cup\mathcal{A}(X_{0})}
     \frac{\ell_{\alpha}(X)}{\ell_{\alpha}(Y)}
     =\frac{\ell_{\alpha_{1}}(X)}{\ell_{\alpha_{1}}(Y)}
     \approx e^{\frac{1}{2}(\ell_{\alpha_{2}}(Y)-\ell_{\alpha_{2}}(X))}.
     \end{split}
    \end{equation*}
   Choosing $\ell_{\alpha_{2}}(Y)$ appropriately large (equivalently, contracting the length of $\alpha_{1}$ appropriately on $X$), we have $d(X,Y)\not= d(Y,X)$.

   Now we show that such a surface $Y$ with the property \eqref{properties for Y} always exists. For simplicity, still denote the length of $\alpha_{i}$ (resp. $\beta_{i}$, $\gamma_{i}$) by $\alpha_{i}$ (resp. $\beta_{i}$, $\gamma_{i}$) for $i=1,2$. By the formulae for right-angled pentagons and right-angled hexagons respectively (see \cite{Buser}) and the assumption that $\sinh{\frac{\beta_{i}}{2}}=\sinh{\frac{l}{2}}=1$ for $i=1,2,3,4$, we have
   \begin{equation}\label{eq:d(X,Y)}
   \begin{split}
 \cosh{\frac{\alpha_{2}}{4}}
 &=\sinh{\frac{\beta_{2}}{2}}\sinh{\frac{\gamma_{2}}{2}}
  =\sinh{\frac{\gamma_{2}}{2}},\\
 \cosh{\gamma_{1}}
 &=\frac{\cosh{\frac{\alpha_{1}}{2}}+
 \cosh{\frac{\beta_{1}}{2}}\cosh{\frac{\beta_{2}}{2}}}
 {\sinh{\frac{\beta_{1}}{2}}\sinh{\frac{\beta_{2}}{2}}}
 =\cosh{\frac{\alpha_{1}}{2}}+2.
 \end{split}
   \end{equation}

 By \eqref{eq:d(X,Y)} and the growth trends of the two functions $y=\cosh{x}$ and $y=\sinh{x}$ (resp. $y=\cosh{x}$ and $y=\cosh{x}+2$), as presented in Figure \ref{fig:f1} (resp. Figure \ref{fig:f2}), we can always find such a deformed surface $Y$ with the property \eqref{properties for Y}.

   \begin{figure}
     \begin{minipage}[t]{0.55\linewidth} 
    \centering
      \begin{tikzpicture}[scale=0.88]
    \draw[eaxis] (-1.5,0) -- (2.5,0) node[below] {$x$};
    \draw[eaxis] (0,-0.9) -- (0,4.7) node[above] {$y$};
    \draw[elegant,blue,domain=-0.01:2.4] plot(\x,{cosh{\x}});
    \draw[elegant,orange,domain=-0.01:2.4] plot(\x,{sinh{\x}});
     \end{tikzpicture}
     \caption{\footnotesize{Two functions: $y=\cosh{x}$ (above) and $y=\sinh{x}$ (below), where $x\geq 0$.}}
      \label{fig:f1}
        \end{minipage}%
  \begin{minipage}[t]{0.56\linewidth}
    \centering
   \begin{tikzpicture}[scale=0.75]
    \draw[eaxis] (-1.5,0) -- (2.5,0) node[below] {$x$};
    \draw[eaxis] (0,-1) -- (0,5.5) node[above] {$y$};
    \draw[elegant,orange,domain=-0.01:2.4] plot(\x,{cosh{\x}});
    \draw[elegant,blue,domain=-0.01:2.4] plot(\x,{cosh{\x}+2});
      \end{tikzpicture}
      \caption{\footnotesize{Two functions: $y=\cosh{x}+2$ (above) and $y=\cosh{x}$ (below), where $x\geq 0$.}}
            \label{fig:f2}
  \end{minipage}
\end{figure}
    \end{proof}

\begin{remark}
    By Proposition \ref{Lemma:A and O}, the asymmetric metric $d$ can be viewed as an analogue, for surfaces of infinite type with boundary, of the arc metric defined for surfaces of finite type with boundary. That's why we also call $d$ the arc metric.

    The problem that if the function $d$ in Theorem \ref{Thurston metric} 
    is positive definite can be viewed as a particular version of the marked length spectrum rigidity problem (see e.g. \cite{Croke, DLK}). In general, let $(M,g)$ be a Riemannian manifold and let $\Sigma$ be a set of homotopy classes of the curves on $M$ one wants to consider. The \emph{$\Sigma$-marked length spectrum} of $(M,g)$ is the length vector $(\ell_{\gamma}(g))_{\gamma\in\Sigma}$ indexed over $\Sigma$, where $\ell_{\gamma}(g)$ is the infinimum of the lengths under the metric $g$ of all the curves in the homotopy class $[\gamma]\in\Sigma$. The \emph{marked length spectrum rigidity problem} asks whether an inequality between the marked length spectra of two Riemannian manifolds implies an isometry homotopic to the identity between them.


    In our case, the rigidity problem is the marked $\mathcal{S}(X_0)\cup\mathcal{A}(X_0)$-spectrum rigidity problem in the special case of complete hyperbolic surfaces of infinite type with geodesic boundary.

 It is necessary to take arcs into consideration in the definition of $d$, since for any complete hyperbolic surface $X_{0}$ of infinite type with geodesic boundary components whose lengths are uniformly bounded above, we can find two distinct elements $X,Y$ in  $\mathcal{T}(X_{0})$ such that $\ell_{\alpha}(Y)\leq\ell_{\alpha}(X)$ for all $\alpha\in\mathcal{S}(X_{0})$. To see this, let $X=X_{0}$. Denote by $\bar{X}$ the Riemann surface such that its convex core is exactly the hyperbolic surface $X$. Let $\bar{Y}$ be the Nielsen extension of $\bar{X}$. Note that there exists a quasiconformal homeomorphism from $\bar{X}$ to $\bar{Y}$. Then we obtain another hyperbolic surface $Y\in\mathcal{T}(X_{0})$ which is the convex core of $\bar{Y}$. By generalized Schwarz lemma, we have $\ell_{\alpha}(Y)\leq\ell_{\alpha}(X)$ for all $\alpha\in\mathcal{S}(X_{0})$. This implies that
\begin{equation*}
\log\sup_{\alpha\in\mathcal{S}(X_{0})}
\frac{\ell_{\alpha}(Y)}{\ell_{\alpha}(X)}\leq 0.
\end{equation*}
\end{remark}

\begin{remark}
 Let $X_{0}$ be a complete hyperbolic surface of infinite type with geodesic boundary. Recall that the set of boundary components of $X_{0}$ is $\mathcal{B}(X_{0})=\{\beta_{1}, \beta_{2}, ... , \beta_{k}, ... \}$. Let $L=(L_{\alpha})_{\alpha \in \mathcal{B}(X_{0})} \in \mathbb{R}^{|\mathcal{B}(X_{0})|}_{>0}$, where $|\mathcal{B}(X_{0})|$ denotes the number of the elements in $\mathcal{B}(X_{0})$ which is finite or countably infinite. Denote by $\mathcal{T}(X_{0},L)$ the subspace of $\mathcal{T}(X_{0})$ which consists of the equivalence classes of marked hyperbolic surfaces with geodesic boundary components of fixed lengths, that is, the geodesic length $\ell_{\beta_{i}}(X)$ of $\beta_{i}$ under each element $X$ of $\mathcal{T}(X_{0},L)$ is $L_{\beta_{i}}$ for each $i\in \mathbb{N}$.  For convenience, we denote $L_{\beta_{i}}$ by $L_{i}$.

 If $X_{0}$ satisfies the geometric condition $(\star)$, as discussed in Theorem \ref{Thurston metric}, by applying the generalized McShane identity \eqref{McShane identity} and the Basmajian identity \eqref{Basmajian identity},
 the following two functions are asymmetric metrics on $\mathcal{T}(X_{0},L)$.
   \begin{equation*}
   d_{1}(X,Y)=\log\sup_{\alpha \in \mathcal{S}(X_{0})}\frac{\ell_{\alpha}(Y)}{\ell_{\alpha}(X)},
   \end{equation*}
   \begin{equation*}
   d_{2}(X,Y)= \log\sup_{\alpha\in \mathcal{A}(X_{0})}\frac{\ell_{\alpha}(Y)}{\ell_{\alpha}(X)},
   \end{equation*}
   for all $X, Y\in \mathcal{T}(X_{0},L)$.
\end{remark}

The following theorem shows that one can obtain the same asymmetric metric by taking the supremum over $\mathcal{A}(X_{0})\cup\mathcal{B}(X_{0})$ instead of $\mathcal{A}(X_{0})\cup\mathcal{S}(X_{0})$ in the formula which defines the arc metric $d$ on $\mathcal{T}(X_{0})$ in Theorem \ref{Thurston metric}.

    \begin{theorem}\label{asymmetric arc metrics}
    Let $X_{0}$ be a complete hyperbolic surface of infinite type with boundary, then the following equality still holds for all $X, Y\in \mathcal{T}(X_{0})$.
     \begin{equation}\label{eq:arc and boundary}
  \sup_{\alpha \in \mathcal{A}(X_{0})\cup\mathcal{B}(X_{0})}\frac{\ell_{\alpha}(Y)}{\ell_{\alpha}(X)}
  =\sup_{\gamma \in \mathcal{A}(X_{0})\cup\mathcal{S}( X_{0})}\frac{\ell_{\gamma}(Y)}{\ell_{\gamma}(X)}.
   \end{equation}
  In particular, if $X_{0}$ satisfies the geometric condition $(\star)$, then the following equality defines the same asymmetric metric on $\mathcal{T}(X_{0})$.
   \begin{equation}\label{AB and AS}
   \log\sup_{\alpha \in \mathcal{A}(X_{0})\cup\mathcal{B}(X_{0})}\frac{\ell_{\alpha}(Y)}{\ell_{\alpha}(X)}
    =\log\sup_{\gamma \in \mathcal{A}(X_{0})\cup\mathcal{S}( X_{0})}\frac{\ell_{\gamma}(Y)}{\ell_{\gamma}(X)}.
   \end{equation}
    \end{theorem}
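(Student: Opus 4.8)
The plan is to exploit that $\mathcal{A}(X_0)$ appears on both sides of \eqref{eq:arc and boundary}, so that only the contributions of $\mathcal{S}(X_0)$ and $\mathcal{B}(X_0)$ must be compared. First I would record the trivial inclusion: every boundary component $\beta_j$, regarded as a free homotopy class of closed curves, is an essential simple closed curve (it is neither peripheral nor null-homotopic), and its geodesic representative is $\beta_j$ itself, so $\mathcal{B}(X_0)\subseteq\mathcal{S}(X_0)$ as homotopy classes with matching geodesic lengths. Consequently $\sup_{\alpha\in\mathcal{A}(X_0)\cup\mathcal{B}(X_0)}\ell_\alpha(Y)/\ell_\alpha(X)\le \sup_{\gamma\in\mathcal{A}(X_0)\cup\mathcal{S}(X_0)}\ell_\gamma(Y)/\ell_\gamma(X)$, which is one of the two inequalities.

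For the reverse inequality it suffices to bound $\ell_\gamma(Y)/\ell_\gamma(X)$ by $\sup_{\alpha\in\mathcal{A}(X_0)\cup\mathcal{B}(X_0)}\ell_\alpha(Y)/\ell_\alpha(X)$ for each $\gamma\in\mathcal{S}(X_0)$, and I would split into two cases. If $\gamma$ is freely homotopic to a boundary component, then $\gamma\in\mathcal{B}(X_0)$ and there is nothing to prove. Otherwise $\gamma\in\mathcal{S}^{int}(X_0)$ is a genuine interior essential simple closed curve, and here I would invoke the approximation mechanism of Lemma \ref{lemma:dense}: choose a simple arc $\gamma^\ast\in\mathcal{A}'(X_0)$ meeting $\gamma$ essentially, and set $\gamma_n=T_\gamma^n(\gamma^\ast)\in\mathcal{A}'(X_0)$, the $n$-th Dehn twist of $\gamma^\ast$ along $\gamma$. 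The point is that the Dehn-twist length asymptotics $\tfrac{1}{i(\gamma^\ast,\gamma)n}\ell_{\gamma_n}(Z)\to\ell_\gamma(Z)$ hold simultaneously for $Z=X$ and $Z=Y$, since the construction is topological and is controlled inside a fixed compact subsurface neighbourhood of $\gamma\cup\gamma^\ast$. Dividing, the common factor $1/(i(\gamma^\ast,\gamma)n)$ cancels, so $\ell_\gamma(Y)/\ell_\gamma(X)=\lim_{n}\ell_{\gamma_n}(Y)/\ell_{\gamma_n}(X)$; since each $\gamma_n\in\mathcal{A}'(X_0)\subseteq\mathcal{A}(X_0)$, every term of the sequence is at most $\sup_{\alpha\in\mathcal{A}(X_0)\cup\mathcal{B}(X_0)}\ell_\alpha(Y)/\ell_\alpha(X)$, and the limit inherits this bound. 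Taking the supremum over all $\gamma\in\mathcal{S}(X_0)$ (together with the common terms from $\mathcal{A}(X_0)$) yields \eqref{eq:arc and boundary}.

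For the \emph{in particular} clause, once \eqref{eq:arc and boundary} is established the two functions agree identically on $\mathcal{T}(X_0)\times\mathcal{T}(X_0)$, so under the geometric condition $(\star)$ the $\mathcal{A}(X_0)\cup\mathcal{B}(X_0)$ formula equals the arc metric $d$ of Theorem \ref{Thurston metric} after taking logarithms, hence is the same asymmetric metric; finiteness of all the ratios involved is guaranteed by Lemma \ref{Wolpert's inequality in our case}. The main obstacle is the interior case of the reverse inequality: the doubling argument of \cite{LPST} used for finite-type surfaces is unavailable here (as noted after Proposition \ref{Lemma:A and O}, because the closure $\mathcal{ML}^{\mathcal{S}}(S^d)$ fails to be compact), so the crux is to replace it by the Dehn-twist approximation and to verify that the length asymptotics and the passage of the ratio to the limit are legitimate for both hyperbolic structures simultaneously, a step that is clean precisely because each approximating arc, together with the relevant length estimate, is supported in a compact subsurface.
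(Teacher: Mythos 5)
Your proposal is correct and takes essentially the same route as the paper: one inequality from the inclusion $\mathcal{B}(X_0)\subseteq\mathcal{S}(X_0)$, and the reverse by approximating each interior essential simple closed curve by Dehn-twisted simple arcs with weights $1/(i(\gamma^\ast,\gamma)n)$, which is exactly the content of the paper's Lemma \ref{lemma:dense}. The only cosmetic difference is that the paper packages this approximation through the closure $\mathcal{ML}^{\mathcal{A}}(X_{0})$ and the extended length function $\mathcal{L}_{\mu}$, whereas you pass directly to the limit of the length ratios (legitimate since the denominator limits are positive); the two formulations are equivalent.
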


     \begin{proof}
   Obviously,
      \begin{equation*}
   \sup_{\alpha \in \mathcal{A}(X_{0})\cup\mathcal{B}(X_{0})}
   \frac{\ell_{\alpha}(Y)}{\ell_{\alpha}(X)}
   \leq\sup_{\gamma \in \mathcal{A}(X_{0})\cup\mathcal{S}( X_{0})}\frac{\ell_{\gamma}(Y)}{\ell_{\gamma}(X)}.
   \end{equation*}

   It suffices to verify that
   \begin{equation}\label{ineq:leq}
   \sup_{\alpha\in\mathcal{A}(X_{0})\cup\mathcal{S}(X_{0})}
   \frac{\ell_{\gamma}(Y)}{\ell_{\gamma}(X)}
   \leq \sup_{\gamma\in \mathcal{A}(X_{0})\cup\mathcal{B}(X_{0})}\frac{\ell_{\gamma}(Y)}{\ell_{\gamma}(X)}.
   \end{equation}

     By Lemma \ref{lemma:dense},    we have
  \begin{equation*}
  \mathcal{S}^{int}(X_{0})\subset\mathcal{ML}^{\mathcal{A}}(X_{0}).
  \end{equation*}

 Observe that
 $\mathcal{S}(X_{0})=\mathcal{S}^{int}(X_{0})\cup\mathcal{B}(X_{0})$ and
     \begin{equation*}
   \sup_{\gamma\in \mathcal{A}(X_{0})}\frac{\ell_{\gamma}(Y)}{\ell_{\gamma}(X)}
   =\sup_{\mu\in\mathcal{ML}^{\mathcal{A}}(X_{0})}
   \frac{\mathcal{L}_{\mu}(Y)}{\mathcal{L}_{\mu}(X)}.
   \end{equation*}

   Therefore, the inequality \eqref{ineq:leq} holds. If $X_{0}$ satisfies the geometric condition $(\star)$, it follows from Theorem \ref{Thurston metric} that the equality \eqref{AB and AS} defines the same asymmetric metric. This completes the proof of this theorem.
    \end{proof}

 \begin{remark}
     Using the same method for the proof of the equality
     \eqref{eq:arc and boundary} in Theorem \ref{asymmetric arc metrics}, we give an affirmative answer to the following question (see Problem 5.5 in \cite{LSZ}): does the equality \eqref{question1} hold on Teichm\"uller spaces of surfaces of infinite type with boundary?
    \begin{equation}\label{question1}
    \log\sup_{\alpha \in \mathcal{A}'(X_{0})\cup \mathcal{B}(X_{0}) }\frac{\ell_{\alpha}(Y)}{\ell_{\alpha}(X)}
   = \log\sup_{\gamma \in \mathcal{A}'(X_{0})\cup\mathcal{S}(X_{0})}\frac{\ell_{\gamma}(Y)}{\ell_{\gamma}(X)}.
   \end{equation}
\end{remark}

    \begin{question}
    For a complete hyperbolic surface $X_{0}$ of infinite type with geodesic boundary, is the following function $\delta$ an asymmetric metric on $\mathcal{T}(X_{0})$? 
     \begin{equation*}
    \delta(X,Y)=\log \sup\limits_{\gamma\in\mathcal{A}(X_{0})}\frac{\ell_{\gamma}(Y)}{\ell_{\gamma}(X)},
    \end{equation*}
    for any $X,Y\in\mathcal{T}(X_{0})$.
    \end{question}

    It is clear that $\delta$ is positive definite for hyperbolic surfaces of finite type with boundary, by an application of the Bridgeman identity\cite{Bridgeman2}.

   \begin{question}
   Let $X_{0}$ be a complete hyperbolic surface $X_{0}$ of infinite type with geodesic boundary, are there two elements $X,Y$ in $\mathcal{T}(X_{0})$ satisfying the following condition?
   \begin{equation*}
   \log\sup_{\alpha\in\mathcal{S}(X_{0})}\frac{\ell_{\alpha}(Y)}{\ell_{\alpha}(X)}<0.
   \end{equation*}
   \end{question}

   For a complete hyperbolic surface of finite type with geodesic boundary, it is true. We can construct some examples by Nielsen extension (see \cite{Bers, Cao}) or strip deformation (see \cite{PG2010, Kassel}).
   Does it still work for the case of surfaces of infinite type with boundary?

  \section{Several examples of hyperbolic surfaces of infinite type which satisfy the geometric condition $(\star)$}

 In this section, we construct several examples of hyperbolic surfaces of infinite type which satisfy the geometric condition $(\star)$. We find that these hyperbolic surfaces may be incomplete. And we prove that there is no direct relation between the geometric condition $(\star)$ and the Shiga's condition.

\subsection{The construction of examples}  In order to construct the desired examples, we first give the following two lemmas.

       \begin{lemma}\label{Lemma for polygon}
       Let $\mathbb{P}_{n}$ be a geodesically convex hyperbolic n-polygon in $\mathbb{H}^{2}$ with consecutive edges $\alpha_{1}, \alpha_{2}, ..., \alpha_{n}$, where the endpoints of the edge $\alpha_{i}$ are denoted by $Q_{i}$ and $Q_{i-1}$, here $Q_{0}=Q_{n}$. Then
       $\rho(x,\alpha_{1})\leq \sup \limits_{2\leq i \leq n-1}\{\rho(Q_{i}, \alpha_{1})\}$
         for any point $x\in \mathbb{P}_{n}$,
         where $\rho$ denotes the hyperbolic distance on $\mathbb{H}^{2}$ and $\rho(x, \alpha_{1})=\inf\limits_{y\in \alpha_{1}}\rho(x,y)$.
       \end{lemma}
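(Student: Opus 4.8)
The plan is to exploit the fact that $\mathbb{H}^2$ is a Hadamard manifold (complete, simply connected, nonpositively curved, hence a $\mathrm{CAT}(0)$ space) and that in such a space the distance to a convex subset is a convex function. Since the edge $\alpha_1$ is a geodesic segment, it is a compact (hence complete) convex subset of $\mathbb{H}^2$; therefore, by the standard convexity property of $\mathrm{CAT}(0)$ spaces, the function $f(x)=\rho(x,\alpha_1)$ is convex along every geodesic of $\mathbb{H}^2$. This single fact is the engine of the whole argument, and the rest is a reduction of the bound over the polygon to a bound over its vertices.

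Having established convexity of $f$, I would first record that the two endpoints of $\alpha_1$ are themselves vertices of $\mathbb{P}_n$: indeed $Q_0=Q_n$ and $Q_1$ are the endpoints of $\alpha_1$, so $f(Q_n)=f(Q_1)=0$. Consequently $\max_{1\le i\le n} f(Q_i)=\sup_{2\le i\le n-1}\rho(Q_i,\alpha_1)$, and it suffices to prove that $f(x)\le \max_{1\le i\le n} f(Q_i)$ for every $x\in\mathbb{P}_n$.

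To obtain this inequality I would apply the convexity of $f$ twice. Fix $x\in\mathbb{P}_n$ and choose any complete geodesic of $\mathbb{H}^2$ through $x$. Since $\mathbb{P}_n$ is compact and convex, this geodesic meets $\mathbb{P}_n$ in a geodesic segment $[a,b]$ with $a,b\in\partial\mathbb{P}_n$ and $x\in[a,b]$; convexity of $f$ along $[a,b]$ gives $f(x)\le \max\{f(a),f(b)\}$. Each of $a$ and $b$ lies on some edge $\alpha_j=[Q_{j-1},Q_j]$, and convexity of $f$ along that edge gives $f(a)\le\max\{f(Q_{j-1}),f(Q_j)\}\le\max_{1\le i\le n} f(Q_i)$, and likewise for $b$. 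Combining these, $f(x)\le\max_{1\le i\le n} f(Q_i)=\sup_{2\le i\le n-1}\rho(Q_i,\alpha_1)$, which is the desired estimate.

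The one delicate point is the convexity of the distance-to-$\alpha_1$ function, and the subtlety to watch is that $\alpha_1$ is a \emph{segment} rather than a full geodesic line: for a point $x$ whose nearest-point projection onto the line carrying $\alpha_1$ falls outside $\alpha_1$, the value $\rho(x,\alpha_1)$ is realized at an endpoint of $\alpha_1$ rather than by an orthogonal drop. This is exactly why I would phrase everything in terms of the distance to the convex \emph{set} $\alpha_1$, for which convexity holds uniformly, rather than the distance to a line; no case analysis on the location of the projection is then required, and the two applications of convexity above close the argument cleanly.
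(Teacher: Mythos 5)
Your proof is correct, and its overall shape is the same as the paper's: reduce the supremum over $\mathbb{P}_n$ to the boundary, then reduce along each edge to its endpoints, and finally discard $Q_0=Q_n$ and $Q_1$ because they lie on $\alpha_1$. The genuine difference is in what justifies these reductions. The paper handles the first reduction with the phrase ``by the continuity of the hyperbolic distance on $\mathbb{P}_n$'' (which by itself only guarantees that a maximum exists, not that it sits on $\partial\mathbb{P}_n$) and the second with an unelaborated appeal to geodesic convexity; you instead derive both reductions from a single explicit fact, the convexity of $x\mapsto\rho(x,\alpha_1)$ along geodesics, valid because $\mathbb{H}^2$ is $\mathrm{CAT}(0)$ and $\alpha_1$ is a complete convex set. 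Applying it once to a chord through $x$ and once to the edge containing each chord endpoint closes the argument with no gaps, and your remark about working with the distance to the \emph{segment} $\alpha_1$ (rather than to the line carrying it) correctly avoids the case analysis on where the nearest-point projection lands. What each approach buys: the paper's version is shorter, while yours is self-contained, rigorous at the step the paper glosses over, and transfers verbatim to the generalization the paper later needs (Proposition \ref{Ex1:tight flute surface }, where the lemma is invoked for a geodesically convex region with infinitely many edges), since convexity of the distance function is insensitive to the number of edges.
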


          \begin{proof}
       By the continuity of the hyperbolic distance on $\mathbb{P}_{n}$, we only need to consider the hyperbolic distance from each point of the piecewise geodesic boundary $\partial \mathbb{P}_{n}$ to $\alpha_{1}$. Note that $\mathbb{P}_{n}$ is geodesically convex, the function $f:\partial \mathbb{P}_{n}\backslash\alpha_{1}\rightarrow\mathbb{R}_{\geq0}$ which assigns $\rho(x, \alpha_{1})$ to $x$ restricted to each smooth edge except $\alpha_{1}$ attains its maximum only if $x$ is one of the two endpoints. Therefore,
       \begin{equation*}
       \sup \limits _{x\in \partial \mathbb{P}_{n}\backslash\alpha_{1}}
       \rho(x, \alpha_{1})=\sup \limits_{2\leq i \leq n-1}\{\rho(Q_{i},\alpha_{1})\},
       \end{equation*}
       which implies the desired result.
       \end{proof}

       \begin{lemma}\label{lemma H_{n}}
Let $H_{n} (n\geq1)$ be a right-angled hexagon in $\mathbb{H}^{2}$ with pairwise non-adjacent edges $\alpha_{n},\beta_{n},\gamma_{n}$ whose lengths are respectively $l_{0}, l_{n}, l_{n}$, where $l_{0}>0$, $\{l_{n}\}^{\infty}_{n=1}$ is a strictly increasing sequence of positive numbers and $l_{n}\rightarrow\infty$ as $n\rightarrow\infty$. Then there exists a constant $M>0$, such that
\begin{equation*}
\sup_{n}\{\sup_{x\in H_{n}}\rho(x,\beta_{n})\}=\sup_{n}\{\sup_{x\in H_{n}}\rho(x,\gamma_{n})\}\leq M,
\end{equation*}
 where $\rho$ denotes the hyperbolic distance on $\mathbb{H}^{2}$.
\end{lemma}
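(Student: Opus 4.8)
The plan is to handle the asserted equality of the two suprema by a symmetry argument, then to use Lemma~\ref{Lemma for polygon} to reduce the supremum over all points of $H_n$ to a maximum over finitely many vertices, and finally to bound those vertex distances uniformly in $n$ by means of the law of cosines for right-angled hexagons.

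First I would record the symmetry. Since $\beta_n$ and $\gamma_n$ have the same length $l_n$ and a right-angled hexagon is determined up to isometry by its three pairwise non-adjacent side lengths (see \cite{Buser}), the hexagon $H_n$ admits a reflective isometry $\sigma_n$ whose axis is the common perpendicular of $\alpha_n$ and its opposite edge and which interchanges $\beta_n$ and $\gamma_n$. As $\rho(x,\beta_n)=\rho(\sigma_n(x),\gamma_n)$ and $\sigma_n(H_n)=H_n$, one gets $\sup_{x\in H_n}\rho(x,\beta_n)=\sup_{x\in H_n}\rho(x,\gamma_n)$ for each $n$, hence the equality of the two suprema over $n$. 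It then suffices to bound $\sup_n\sup_{x\in H_n}\rho(x,\beta_n)$.

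Next, write the consecutive edges of $H_n$ as $\alpha_n,e_1,\beta_n,e_2,\gamma_n,e_3$ and apply Lemma~\ref{Lemma for polygon} with $\beta_n$ in the role of the distinguished edge: the quantity $\sup_{x\in H_n}\rho(x,\beta_n)$ is then at most the maximum of the distances to $\beta_n$ of the four vertices of $H_n$ not lying on $\beta_n$, which are exactly the two endpoints of $\alpha_n$ and the two endpoints of $\gamma_n$. Denoting the lengths of the connecting edges $e_1,e_2,e_3$ by $p_1,p_2,p_3$, the right-angled hexagon cosine rule (see \cite{Buser}) gives
\begin{equation*}
\cosh p_1=\cosh p_3=\frac{\cosh l_n+\cosh l_0\cosh l_n}{\sinh l_0\sinh l_n}=\coth\tfrac{l_0}{2}\,\coth l_n,\qquad
\cosh p_2=\frac{\cosh l_0+\cosh^2 l_n}{\sinh^2 l_n}.
\end{equation*}
Since $\{l_n\}$ is increasing with $l_n\ge l_1>0$ and $\coth$ is decreasing, both right-hand sides are bounded above by constants depending only on $l_0$ and $l_1$; in fact $p_1=p_3$ converge to $\arccosh(\coth\tfrac{l_0}{2})$ and $p_2\to 0$, so all three connecting edges have length at most some $P=P(l_0,l_1)$, uniformly in $n$.

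Finally I would bound each of the four vertex distances by the length of a boundary path joining that vertex to an endpoint of $\beta_n$ that travels only along $\alpha_n,e_1,e_2,e_3$ and never along the long edges $\beta_n$ or $\gamma_n$: each endpoint of $\alpha_n$ reaches $\beta_n$ along a path of length at most $l_0+p_1$, while an endpoint of $\gamma_n$ reaches $\beta_n$ either directly along $e_2$ (length $p_2$) or, for the far endpoint, along $e_3,\alpha_n,e_1$ (length $p_3+l_0+p_1$). Each of these is at most $M:=l_0+2P$, which is independent of $n$, so Lemma~\ref{Lemma for polygon} gives $\sup_{x\in H_n}\rho(x,\beta_n)\le M$ for all $n$, as required. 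The main obstacle is precisely the far endpoint of the long edge $\gamma_n$: routing to $\beta_n$ along $\gamma_n$ gives a useless estimate of size $\sim l_n\to\infty$, and the crux of the argument is that one may instead route the other way around the hexagon, through the short edge $\alpha_n$ and the two connecting edges, whose lengths stay uniformly bounded by the hexagon trigonometry above.
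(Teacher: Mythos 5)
Your proof is correct, and it reaches the uniform bound by a genuinely different (and more elementary) route than the paper. Both arguments start identically: reduce $\sup_{x\in H_n}\rho(x,\cdot)$ to the four vertices off the distinguished edge via Lemma \ref{Lemma for polygon}, and then invoke right-angled hexagon trigonometry from \cite{Buser}. The paper, however, computes the \emph{exact} distances from those four vertices to $\gamma_n$: two of them ($a_n$, $b_n$) are lengths of the hexagon sides meeting $\gamma_n$ perpendicularly, and the other two are obtained from trirectangle identities, $\sinh c_n=\sinh a_n\cosh l_0$ and $\sinh d_n=\sinh b_n\cosh l_n$. The last quantity is an indeterminate product ($\sinh b_n\to0$ while $\cosh l_n\to\infty$), and the heart of the paper's proof is the limit $\lim_{x\to\infty}\cosh^2 x\,(\coth^4 x-1)=2$, established by a L'Hospital-type computation, which is what keeps $d_n$ bounded. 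You sidestep exactly this delicate step: instead of computing the distance from the far vertex of $\gamma_n$, you dominate every vertex distance by the length of a boundary path avoiding the two long sides, so the only trigonometric input needed is the uniform bound on the three connecting edges, $\cosh p_1=\cosh p_3=\coth\frac{l_0}{2}\coth l_n\le\coth\frac{l_0}{2}\coth l_1$ and $\cosh p_2\le\frac{\cosh l_0}{\sinh^2 l_1}+\coth^2 l_1$, which follows from monotonicity alone; the observation that the far endpoint of $\gamma_n$ should be routed \emph{around} the hexagon through $e_3,\alpha_n,e_1$ is precisely what replaces the paper's limit computation. Your handling of the equality of the two suprema is also cleaner: the paper simply asserts $\rho(x,\beta_n)=\rho(x,\gamma_n)$, which is literally true pointwise only after applying the reflective symmetry that you make explicit. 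What each approach buys: yours is shorter, avoids the indeterminate limit, and yields an explicit constant $M=l_0+2P$ depending only on $l_0$ and $l_1$; the paper's exact computation gives sharper information (precise asymptotics of each vertex distance), which is more than the lemma requires.
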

\begin{proof}
Denote the vertices of $H_{n}$ which are not on the edge $\gamma_{n}$ by $A_{n}$, $C_{n}$, $D_{n}$, $B_{n}$ respectively in the counter-clockwise order as presented in Figure \ref{fig:not t-eq1}. Denote $\rho(A_{n},\gamma_{n})=a_{n}$, $\rho(B_{n},\gamma_{n})=b_{n}$, $\rho(C_{n},\gamma_{n})=c_{n}$, $\rho(D_{n},\gamma_{n})=d_{n}$.

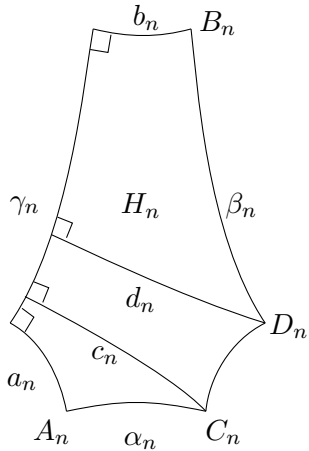
\begin{figure}[ht]
\begin{tikzpicture}[scale=0.78]
\draw (-2.8,2.5) .. controls (-2.2,2.35) and (-1.7,2.35) .. (-1.15,2.5);
\draw (-2.8,2.5) .. controls (-2.95,1.65) and (-3.2,-1.05) .. (-4.2,-2.5);
\draw (-1.15,2.5) .. controls (-1.05,1.65) and (-0.9,-1) .. (0.1,-2.5);
\draw (-4.2,-2.5) .. controls (-3.65,-2.8) and (-3.35,-3.45) .. (-3.25,-4);
\draw (0.1,-2.5) node (v1) {} .. controls (-0.5,-2.8) and (-0.85,-3.45) .. (-0.9,-4);
\draw (-3.25,-4) .. controls (-2.3,-3.8) and (-1.85,-3.8) .. (-0.9,-4);
\node at (-3.95,-0.5) {$\gamma_{n}$};
\node at (-0.3,-0.5) {$\beta_{n}$};
\node at (-2,-4.5) {$\alpha_{n}$};
\node at (-2,-0.5) {$H_{n}$};
\node at (-3.5,-4.35) {$A_{n}$};
\node at (-0.6,-4.35) {$C_{n}$};
\node at (0.5,-2.55) {$D_{n}$};
\node at (-0.7,2.6) {$B_{n}$};
\draw (-4,-2.25) -- (-3.8,-2.4) -- (-3.95,-2.65);
\draw (-2.85,2.15) -- (-2.55,2.1) -- (-2.5,2.4);
\draw (0.1,-2.5) .. controls (-1.35,-2.05) and (-3.05,-1.2) .. (-3.5,-1);
\draw (-3.4,-0.7) -- (-3.15,-0.8) -- (-3.25,-1.05);
\draw (-0.9,-4) .. controls (-1.5,-3.35) and (-3.35,-2.35) .. (-3.95,-2.05);
\draw (-3.8,-1.8) -- (-3.55,-1.9) -- (-3.65,-2.15);
\node at (-4,-3.5) {$a_{n}$};
\node at (-2.6,-3.05) {$c_{n}$};
\node at (-2,-2.1) {$d_{n}$};
\node at (-1.9,2.7) {$b_{n}$};
\end{tikzpicture}
\caption{\small{The right-angled hexagon $H_{n}$ in Lemma \ref{lemma H_{n}}.}}
            \label{fig:not t-eq1}
\end{figure}

 Note that $\rho(x,\beta_{n})=\rho(x,\gamma_{n})$. By Lemma \ref{Lemma for polygon}, it suffices to show that
$\sup\limits_{n}\{a_{n}, b_{n}, c_{n}, d_{n}\}\leq M$ for a constant $M>0$.

By the formula for a right-angled hexagon and the formula for a trirectangle (that is, a quadrilateral with three right angles) \cite{Buser}, we have
\begin{equation}\label{eq:A}
\cosh{a_{n}}=\frac{\cosh{l_{n}}+\cosh{l_{n}}\cosh{l_{0}}}{\sinh{l_{n}}\sinh{l_{0}}}
\leq\frac{\coth{l_{1}}}{\sinh{l_{0}}}+\coth{l_{1}}\coth{l_{0}}.
\end{equation}
\begin{equation}\label{eq:B}
\cosh{b_{n}}=\frac{\cosh{l_{0}}+\cosh{l_{n}}\cosh{l_{n}}}{\sinh{l_{n}}\sinh{l_{n}}}
\leq\frac{\cosh{l_{0}}}{(\sinh{l_{1}})^{2}}+(\coth{l_{1}})^{2}.
\end{equation}
\begin{equation}\label{eq:C}
\sinh{c_{n}}=\sinh{a_{n}}\cosh{l_{0}}\leq\cosh{a_{n}}\cosh{l_{0}}.
\end{equation}
\begin{equation}\label{eq:D}
\sinh{d_{n}}=\sinh{b_{n}}\cosh{l_{n}}=\sqrt{(\cosh{b_{n}})^{2}-1}\cosh{l_{n}}.
\end{equation}

Substitute \eqref{eq:B} into \eqref{eq:D}, we have
\begin{equation}\label{eq:D1}
\begin{split}
\sinh{d_{n}}=\sqrt{\frac{\cosh^{2}{l_{0}}\coth^{2}{l_{n}}}{\sinh^{2}{l_{n}}}
+2\coth^{4}{l_{n}}\cosh{l_{0}}+\cosh^{2}{l_{n}}(\coth^{4}{l_{n}}-1)}\\
\leq\sqrt{\frac{\cosh^{2}{l_{0}}\coth^{2}{l_{1}}}{\sinh^{2}{l_{1}}}
+2\coth^{4}{l_{1}}\cosh{l_{0}}+\cosh^{2}{l_{n}}(\coth^{4}{l_{n}}-1)}
\end{split}
\end{equation}

Note that $\coth{x}\rightarrow 1$, $\sech{x}\rightarrow 0$ as $x\rightarrow \infty$ and $(\coth{x})'=-\csch^{2}{x}$, $(\sech{x})'=-\sech{x}\tanh{x}$, we have
\begin{equation}\label{eq:D2}
\begin{split}
&\lim\limits_{x\rightarrow\infty}\cosh^{2}{x}(\coth^{4}{x}-1)\\
&=\lim\limits_{x\rightarrow\infty}
{\frac{\coth^{4}{x}-1}{\sech^{2}{x}}}\\
&=\lim\limits_{x\rightarrow\infty}
\frac{-4\coth^{3}{x}\csch^{2}{x}}{-2\sech^{2}{x}\tanh{x}}\\
&=\lim\limits_{x\rightarrow\infty}
2\coth^{6}{x}\\
&=2
\end{split}
\end{equation}

Combining \eqref{eq:A}, \eqref{eq:B}, \eqref{eq:C},  \eqref{eq:D1} and  \eqref{eq:D2}, we have the desired result.
\end{proof}

\begin{example}\label{Ex:my}
Now we construct a complete hyperbolic surface $X_{0}$ of infinite type which satisfies the geometric condition $(\star)$.

Let $\{l_{n}\}^{\infty}_{n=1}$ be a strictly increasing divergent sequence of positive numbers.
Let $P_{n}$ be a hyperbolic pair of pants with boundary lengths $(2l_{0}, 2l_{n}, 2l_{n})$. Then we glue $P_{n}$ with its copy $P_{n}'$ along the geodesic boundary component of common length $2l_{n}$. Denote by $X_{n}$ the obtained X-piece for $n\geq1$. Let $X_{0}$ be the surface obtained by gluing the sequence $\{X_{n}\}^{\infty}_{n=1}$ in succession along the geodesic boundary component $\alpha_{n}$ of common length $2l_{0}$, as indicated in Figure \ref{figure1}. Note that the amount of the twisting along the gluing curves can be taken arbitrarily in the above process.

\begin{figure}[ht]
\begin{tikzpicture}[scale=0.65]
\draw (-0.54,0.24) .. controls (-0.49,0.5) and (2.2,0.5175) .. (2.15,0.2175);
\draw (-0.55,0.25) .. controls (-0.6,0) and (2.1,-0.0325) .. (2.15,0.2175);
\draw (-0.55,0.25) .. controls (-0.65,-0.2) and (-0.8,-0.4) .. (-1.15,-0.6);
\draw (-1.15,-0.6) .. controls (-1.4,-0.7) and (-1.15,-1.9235) .. (-0.9,-1.9235);
\draw (-1.15,-0.6) .. controls (-0.9,-0.5) and (-0.6,-1.8235) .. (-0.9,-1.9235);
\draw (-0.9,-1.9235) .. controls (-0.5,-1.7735) and (0.2,-2.3) .. (0,-2.7);
\draw (0,-2.7) .. controls (0,-3) and (2.6,-2.9325) .. (2.6,-2.6325);
\draw [dashed] (0,-2.7) .. controls (0,-2.5) and (2.6,-2.4325) .. (2.6,-2.6325);
\draw(2.6,-2.6325) .. controls (2.75,-2.3325) and (2.85,-2.1325) .. (3.2,-1.9325);
\draw (2.15,0.2175) .. controls (2.1,-0.1825) and (2.5,-0.659) .. (3.05,-0.609);
\draw (3.05,-0.609) .. controls (3.3,-0.609) and (3.45,-1.8325) .. (3.2,-1.9325);
\draw[dashed]  (3.05,-0.609) .. controls (2.8,-0.609) and (3,-1.9825) .. (3.2,-1.9325);
\draw (3.05,-0.609) .. controls (3.55,-0.459) and (3.75,-0.3325) .. (3.85,0.2175);
\draw (3.85,0.2175) .. controls (3.9,0.4675) and (7.55,0.482) .. (7.5,0.182);
\draw (3.85,0.2175) .. controls (3.8,-0.0825) and (7.4,-0.068) .. (7.5,0.182);
\draw (7.5,0.182) .. controls (7.4,-0.118) and (7.6,-0.718) .. (8.2,-0.618);
\draw (3.2,-1.9325) .. controls (3.65,-1.7825) and (4.1,-2.1325) .. (4.1,-2.7325);
\draw (4.1,-2.7325) .. controls (4,-2.9825) and (7.85,-2.965) .. (7.85,-2.665);
\draw [dashed] (4.1,-2.7325) .. controls (4.2,-2.4325) and (7.9,-2.415) .. (7.85,-2.665);
\draw (7.85,-2.665) .. controls (8.05,-2.265) and (8.2,-2.165) .. (8.4,-1.965);
\draw[dashed] (8.2,-0.618) .. controls (7.95,-0.618) and (8.2,-2.065) .. (8.4,-1.965);
\draw (8.2,-0.618) .. controls (8.5,-0.518) and (8.7,-1.815) .. (8.4,-1.965);
\draw (8.2,-0.618) .. controls (8.8,-0.418) and (9.05,-0.118) .. (9.1,0.2615);
\draw (8.4,-1.965) .. controls (8.95,-1.765) and (9.6,-2.2885) .. (9.55,-2.6885);
\draw (9.1,0.2615) .. controls (9.05,0.0115) and (14.2235,-0.0945) .. (14.2,0.279);
\draw (9.1,0.2615) .. controls (9.2,0.5115) and (14.2,0.479) .. (14.2,0.279);
\draw[dashed] (9.55,-2.6885) .. controls (9.55,-2.4885) and (14.6735,-2.3945) .. (14.6735,-2.6445);
\draw (9.55,-2.6885) .. controls (9.55,-2.8885) and (14.6235,-2.8945) .. (14.6735,-2.6445);
\draw (14.2,0.279) .. controls (14.4205,-0.4475) and (14.6,-0.6005) .. (15.1,-0.524);
\draw (14.6735,-2.6445) .. controls (14.7235,-2.3445) and (14.9235,-2.0945) .. (15.4735,-1.7975);
\draw[dashed] (15.1,-0.524) .. controls (14.85,-0.721) and (15.1735,-1.8475) .. (15.4235,-1.7975);
\draw (15.1,-0.524) .. controls (15.4,-0.474) and (15.6735,-1.6975) .. (15.4235,-1.7975);
\draw [dashed](-0.1,-2.2) .. controls (-0.3235,-1.9735) and (2.1265,-0.059) .. (2.3735,-0.3325);
\draw (-0.1,-2.2) .. controls (0.05,-2.45) and (2.4735,-0.509) .. (2.3735,-0.3325);
\draw [dashed](4,-2.2825) .. controls (3.85,-1.9825) and (7.45,-0.118) .. (7.55,-0.368);
\draw (4,-2.2825) .. controls (4.15,-2.4825) and (7.65,-0.568) .. (7.55,-0.368);
\draw[dashed] (9.4235,-2.2885) .. controls (9.25,-2.0885) and (14.2235,-0.0945) .. (14.3,-0.124);
\draw (9.4235,-2.2885) .. controls (9.45,-2.4885) and (14.4,-0.374) .. (14.3,-0.124);
\node at (-0.0307,-0.7731) {$P_{1}$};
\node at (2.3345,-1.5249) {$P_{1}'$};
\node at (4.85,-0.8056) {$P_{2}$};
\node at (7.308,-1.5309) {$P_{2}'$};
\node at (10.4231,-0.618) {$P_{3}$};
\node at (13.8528,-1.413) {$P_{3}'$};
\node at (0.8655,-3.353) {$X_{1}$};
\node at (5.6886,-3.312) {$X_{2}$};
\node at (11.9962,-3.268) {$X_{3}$};
\node at (3.75,-1.2325) {$\alpha_{1}$};
\node at (8.9735,-1.1915) {$\alpha_{2}$};
\node at (15.85,-1.0975) {$\alpha_{3}$};
\draw (15.1,-0.524) .. controls (15.7,-0.324) and (16.15,0.0025) .. (16.2,0.4025);
\draw (15.4735,-1.7975) .. controls (16.05,-1.671) and (16.5,-1.971) .. (16.65,-2.221);
\node at (0.9614,0.8462) {$\beta_{1}$};
\node at (5.6538,0.7693) {$\beta_{2}$};
\node at (11.9231,0.7307) {$\beta_{3}$};
\node at (1.4614,-2.1924) {$\beta_{1}'$};
\node at (6.0769,-2.1155) {$\beta_{2}'$};
\node at (12.5,-2.1155) {$\beta_{3}'$};
\end{tikzpicture}
 \caption{\small{The hyperbolic surface $X_{0}$ of infinite type in Example \ref{Ex:my}.}}
   \label{figure1}
  \end{figure}
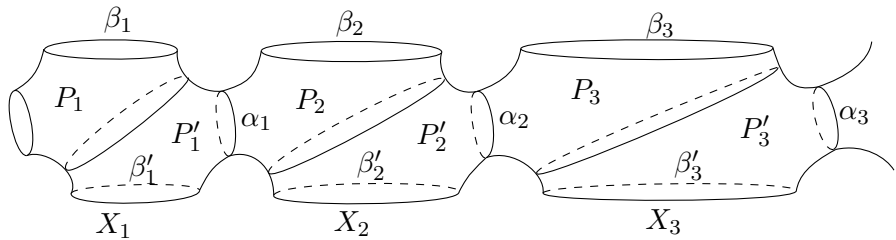
Since any closed ball of radius 1 on the surface $X_{0}$ is contained in a finite number of pairs of pants of the given decomposition as show in Figure \ref{figure1}, then it is compact. By the Hopf-Rinow Theorem, $X_{0}$ is complete.

We claim that $X_{0}$ satisfies the geometric condition $(\star)$.
Indeed, $P_{n}$ can be constructed by pasting two copies of the right-angled hexagon $H_{n}$ with pairwise non-adjacent edges of lengths $l_{0}, l_{n}, l_{n}$ along the remaining edges. Denote by $\beta_{n}$ (resp. $\beta_{n}'$) the boundary component of $P_{n}$ (resp. $P_{n}'$) which is contained in $\partial X_{0}$ and has length $2l_{n}$.
For each $x\in X_{0}$, there exists an integer $N\geq1$, such that $x$ lies in $P_{N}$ or $P_{N}'$. Without lost of generality, we assume that $x\in P_{N}$. By Lemma \ref{lemma H_{n}}, there exists a constant $M>0$ independent of $N$ such that
\begin{equation*}
\rho(x,\partial X_{0})\leq\rho(x, \beta_{N})\leq M,
\end{equation*}
which implies the claim.
\end{example}

          In order to construct more examples which satisfy the geometric condition $(\star)$, we introduce the following notations and propositions given by Basmajian (see \cite{Basmajian2, Basmajian3}).

         A \emph{flute surface} is a hyperbolic surface of infinite type obtained by gluing a sequence of generalized hyperbolic pairs of pants $\{P_{i}\}^{\infty}_{i=0}$ in succession along the common length boundary components, that is, any two adjacent pairs of pants $P_{i}$, $P_{i+1}$ have exactly one common geodesic boundary component which is denoted by $\alpha_{i+1}$ for $i\geq 0$. Note that $P_{0}$ has at least one geodesic boundary component $\alpha_{1}$ and $P_{i}$ has at least two geodesic boundary components $\alpha_{i}$, $\alpha_{i+1}$ for $i\geq 1$.
         We say that a flute surface is \emph{tight} if all the pants holes that have not been glued along are in fact cusps. In this case, denote by $\alpha_{0}$ the image of a horocycle under the universal covering of this surface, which is a simple closed curve of length one and homotopic to a cusp of $P_{0}$ (see Figure \ref{fig:tight flute}).
          We say a subsurface $S$ is a \emph{spike} if it is isometric to the region $\{z=x+iy: 0\leq x\leq1, y>a\}$ of $\mathbb{H}^{2}$, for some $a>0$.

          Let $\ell_{i}$ be the length of $\alpha_{i}$ for $i\geq0$. Denote by $d_{i}$ the hyperbolic distance from $\alpha_{i}$ to $\alpha_{i+1}$ and denote by $s_{i}$ the amount of the twisting along $\alpha_{i+1}$ for $i\geq 0$. Here the amount of a positive Dehn-twist along $\alpha_{i+1}$ is defined to be the hyperbolic length of $\alpha_{i+1}$.

                       \begin{figure}[ht]
\begin{tikzpicture}[scale=0.90]
\draw (-4.75,3.4) .. controls (-5,0.65) and (-5.45,0.45) .. (-7.45,-0.05);
\draw (-7.45,-0.05) .. controls (-5.4,0.2) and (-1.85,0.25) .. (0.8,-0.25);
\draw (-4.7,3.4) .. controls (-4.6,0.9) and (-4.35,1) .. (-3.6,1.05);
\draw (-3.6,1.05) .. controls (-3.3,1.2) and (-2.75,1.7) .. (-2.55,4.387);
\draw (-2.5,4.387) .. controls (-2.35,1.9) and (-2.15,2) .. (-1.7,2.05);
\draw (-1.7,2.05) .. controls (-1.35,2.15) and (-1,3.15) .. (-0.85,5.7896);
\draw (-0.8,5.7896) .. controls (-0.75,3.55) and (-0.55,3.3) .. (-0.15,3.45);
\draw (-0.15,3.45) .. controls (0.35,3.6) and (0.6,4.55) .. (0.6,6.837);
\draw[dashed] (-5.9332,0.4056) .. controls (-6.0332,0.4556) and (-6.0332,0.1) .. (-5.9332,0.1);
\draw (-5.9332,0.4056) .. controls (-5.8332,0.4556) and (-5.8332,0.1) .. (-5.9332,0.1);
\draw[dashed] (-3.8,1.05) .. controls (-4,1.05) and (-4.05,0.15) .. (-3.85,0.15);
\draw (-3.8,1.05) .. controls (-3.6,1.05) and (-3.65,0.15) .. (-3.85,0.15);
\draw[dashed] (-1.8,2.05) .. controls (-2,2.05) and (-2.15,0.1) .. (-1.95,0.1);
\draw (-1.8,2.05) .. controls (-1.55,2.05) and (-1.75,0.05) .. (-1.95,0.1);
\draw[dashed] (-0.35,3.4) .. controls (-0.65,3.45) and (-1,0) .. (-0.75,0);
\draw (-0.35,3.4) .. controls (-0.15,3.35) and (-0.5,-0.05) .. (-0.75,0);
\node at (-3,-0.25) {$d_{1}$};
\node at (-1.3,-0.3) {$d_{2}$};
\draw (0.8,-0.25) .. controls (1.25,-0.3) and (1.55,5.95) .. (1.6,7.8422);
\draw[dashed] (0.8,-0.25) .. controls (0.5,-0.15) and (0.95,5.95) .. (1,7.5818);
\draw[dotted, ultra thick] (0.2,1.95) -- (0.65,1.95);
\draw[dotted, ultra thick] (-0.05,-0.45) -- (0.4,-0.5);
\node at (-4.8,1.25) {$P_{0}$};
\node at (-2.65,1.35) {$P_{1}$};
\node at (-1.1,1.4) {$P_{2}$};
\node at (-3.3388,0.55) {$\alpha_{1}$};
\node at (-1.4444,0.6) {$\alpha_{2}$};
\node at (-0.1888,0.6) {$\alpha_{3}$};
\node at (1.4,0.6) {$\alpha$};
\node at (-5.556,0.3332) {$\alpha_{0}$};
\node at (-5,-0.278) {$d_{0}$};
\end{tikzpicture}
			\caption{\small{A tight flute surface $Y_{0}$ (where $\alpha_{0}$ is the image of a horocycle under the universal covering and has length one).}}
            \label{fig:tight flute}
		\end{figure}
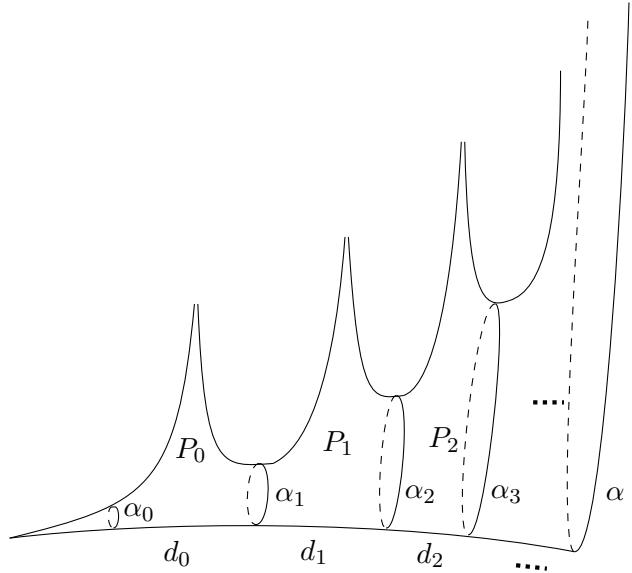

         Let $\{L_{i}\}_{i\geq0}$ be a sequence of geodesics in $\mathbb{H}^{2}$. We say that $\{L_{i}\}$ is a \emph{nested sequence of geodesics} if $L_{i-1}$ and $L_{i+1}$ lie in different components of $\mathbb{H}^{2}-L_{i}$ for each $i\geq1$, and if the $L_{i}$ are disjoint in $\overline{\mathbb{H}}^{2}$.
         $\{L_{i}\}$ \emph{converges to the geodesic $L$} if the endpoints of $L_{i}$ converge to the endpoints of $L$ on $\partial\mathbb{H}^{2}$. If the endpoints of $L_{i}$ converge to a single point of $\partial\mathbb{H}^{2}$, then we say that the sequence $\{L_{i}\}$ \emph{converges to a point} on the boundary of the hyperbolic plane. It is known that the limit of a nested sequence of geodesics in $\mathbb{H}^{2}$ is unique and is one of the above two possibilities.

         Introduce a positive direction for each $L_{i}$ by designating $L_{i+1}$ to lie to the right of $L_{i}$. Let $\sigma_{i}$ be the unique common perpendicular between the geodesics $L_{i}$ and $L_{i+1}$. The distance from $\sigma_{i}$ to $\sigma_{i+1}$ is measured by traversing $L_{i+1}$. Let $s_{i}=\rho(\sigma_{i},\sigma_{i+1})$ if $L_{i+1}$ is traversed in the positive direction, and let $s_{i}=-\rho(\sigma_{i},\sigma_{i+1})$ if $L_{i+1}$ is traversed in the negative direction. Set $d_{i}=\rho(L_{i},L_{i+1})$. Here $\rho$ is the hyperbolic distance on $\mathbb{H}^{2}$.

         \begin{proposition}\label{Basmajian proposition1}
         \emph{(Basmajian \cite{Basmajian2})}
         Let the sequence $\{L_{i}\}_{i\geq0}$ be a nested sequence of geodesics in $\mathbb{H}^{2}$. Then $L_{i}$ converges to a geodesic if and only if
         \begin{equation*}
         \lim_{i\rightarrow\infty}\rho(L_{1}, L_{i})<\infty,
         \end{equation*}
         where $\rho(L_{1},L_{i})$ is the hyperbolic distance between $L_{1}$ and $L_{i}$.
         \end{proposition}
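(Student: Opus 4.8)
The plan is to exploit the monotonicity of the distances that nestedness forces, and then to split according to the dichotomy for the limit of a nested sequence that was recalled just before the statement (the sequence converges either to a geodesic $L$ or to a point $\xi\in\partial\mathbb{H}^{2}$). First I would record that $\{\rho(L_{1},L_{i})\}_{i\geq 1}$ is non-decreasing, so that $D:=\lim_{i\to\infty}\rho(L_{1},L_{i})$ exists in $[0,+\infty]$. Indeed, by nestedness $L_{i-1}$ separates $L_{1}$ from $L_{i}$ in $\mathbb{H}^{2}$, so every path from $L_{1}$ to $L_{i}$ must cross $L_{i-1}$; minimizing length over such paths gives $\rho(L_{1},L_{i})\geq\rho(L_{1},L_{i-1})$. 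It therefore suffices to decide in each of the two cases whether $D<\infty$.

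The soft direction is the geodesic case. Suppose the endpoints of $L_{i}$ converge to the two distinct endpoints of a geodesic $L$; then $L_{i}\to L$ uniformly on compact subsets of $\mathbb{H}^{2}$. Fixing any point $p\in L$, I would choose $p_{i}\in L_{i}$ with $p_{i}\to p$, whence $\rho(L_{1},L_{i})\leq\rho(L_{1},p_{i})\to\rho(L_{1},p)<\infty$, the distance from a point to a geodesic being always finite. Together with the monotonicity this gives $D\leq\rho(L_{1},p)<\infty$. Note this argument needs no common perpendicular and so also covers the case in which $L$ is asymptotic to $L_{1}$.

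The substantive direction is the contrapositive in the point case: if both endpoints of $L_{i}$ converge to a single $\xi\in\partial\mathbb{H}^{2}$, I must show $D=+\infty$. Let $H_{i}$ denote the closed half-plane bounded by $L_{i}$ on the side away from $L_{1}$, so that $H_{1}\supset H_{2}\supset\cdots$ and the boundary arcs $J_{i}:=\overline{H_{i}}\cap\partial\mathbb{H}^{2}$ are \emph{strictly} nested (by the separating condition in the definition of nestedness, the endpoints of $L_{i}$ lie strictly inside $J_{i-1}$). Hence their common limit $\xi$ lies in the open arc determined by $L_{1}$, and in particular $\xi$ is not an endpoint of $L_{1}$, i.e. $\xi\notin\overline{L_{1}}$. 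Now argue by contradiction: if $D\leq D_{0}<\infty$, then for each $i$ there are $x_{i}\in L_{1}$ and $y_{i}\in L_{i}\subset\overline{H_{i}}$ with $\rho(x_{i},y_{i})\leq D_{0}+1$. Since $\overline{H_{i}}\to\{\xi\}$ in $\overline{\mathbb{H}}^{2}$ we have $y_{i}\to\xi$, and because a hyperbolic ball of fixed radius centred at a point approaching $\partial\mathbb{H}^{2}$ shrinks, in the Euclidean sense, onto that ideal point, it follows that $x_{i}\to\xi$ as well; this contradicts $x_{i}\in L_{1}$ and $\xi\notin\overline{L_{1}}$.

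The hard part will be precisely this last case, namely converting the purely topological statement ``the geodesics collapse to a boundary point'' into the metric conclusion ``their distance to $L_{1}$ diverges.'' Two ingredients make it work: the verification that the limit point $\xi$ cannot be an endpoint of $L_{1}$, which is exactly what the separating clause in the definition of a nested sequence buys us and which would fail for a merely disjoint shrinking family; and the elementary fact that bounded hyperbolic neighbourhoods of a sequence tending to the ideal boundary shrink onto the limiting ideal point. I expect no difficulty in the two soft cases, and the whole argument then assembles into the claimed equivalence $D<\infty\iff\{L_{i}\}$ converges to a geodesic.
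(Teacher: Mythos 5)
Your proof is correct, but note that the paper itself contains no proof of this statement: it is quoted verbatim from Basmajian's paper (Trans.\ Amer.\ Math.\ Soc.\ 336 (1993)), so there is no internal argument to compare against. What you have produced is a complete, self-contained and elementary proof, and its logic holds up: the monotonicity of $\rho(L_1,L_i)$ makes the limit $D$ well defined; the dichotomy recalled in the paper just before the statement (limit is a geodesic or a single ideal point) reduces the equivalence to showing $D<\infty$ in the geodesic case and $D=\infty$ in the point case; the geodesic case follows from local Hausdorff convergence of $L_i$ to $L$, correctly avoiding any appeal to a common perpendicular (so the asymptotic case is covered); and the point case follows from the collapsing of the nested half-planes $\overline{H_i}$ onto $\xi$ together with the Euclidean shrinking of hyperbolic balls of fixed radius near $\partial\mathbb{H}^2$. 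Two small elisions are worth making explicit if you write this up: first, the statement that $L_{i-1}$ separates $L_1$ from $L_i$ for \emph{all} $i$ is not the literal definition of nestedness (which only separates consecutive triples) but follows from it by a short induction on nested half-planes; second, the claim that the endpoints of $L_i$ lie strictly inside $J_{i-1}$ --- which is what guarantees $\xi\notin\overline{L_1}$ --- uses both clauses of the definition, the separation condition \emph{and} the disjointness of the $L_i$ in $\overline{\mathbb{H}}^2$, not the separation clause alone. With those two routine points filled in, your argument is a valid replacement for the external citation.
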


        \begin{proposition}\label{Basmajian proposition2}
        \emph{(Basmajian \cite{Basmajian2}, The Pair of Pants Theorem)}
        Suppose $\gamma$ and $\beta$ are nonelliptic elements. Let $d$ be the hyperbolic distance between the axes of $\gamma$ and $\beta$ (if $\gamma$ is parabolic, the axis of $\gamma$ is the horocycle based at the fixed point of $\gamma$ whose projection to $\mathbb{H}^{2}/\langle\gamma\rangle$ has length one). Then $(\gamma, \beta)$ form standard generators for a tight pair of pants (that is, the third boundary component of this pair of pants is a cusp) if and only if $c(\gamma)+c(\beta)= d$. Here $c(\gamma)=\log2$ if $\gamma$ is parabolic, and $c(\gamma)=\log\coth{\frac{T(\gamma)}{4}}$ if $\gamma$ is hyperbolic, where $T(\gamma)$ is the translation length of $\gamma$.

        \end{proposition}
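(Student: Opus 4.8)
The plan is to reduce the statement to the hyperbolic trigonometry of a single pair of pants, the point being that when $\gamma$ and $\beta$ are the boundary holonomies of a tight pair of pants $P=\mathbb{H}^2/\langle\gamma,\beta\rangle$, the number $d=\rho(A_\gamma,A_\beta)$ (the distance between the axes, with the horocyclic convention in the parabolic case) is exactly the length of the seam of $P$ joining the two boundary components carried by $\gamma$ and $\beta$, while the third boundary is the class of $(\gamma\beta)^{-1}$. So I would characterize the cusp condition through the holonomy $\gamma\beta$ and then convert it into a statement about $d$.

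First I would normalize coordinates when both elements are hyperbolic: place $A_\gamma$ on the imaginary axis, so that $\gamma=\mathrm{diag}\!\left(e^{T(\gamma)/2},e^{-T(\gamma)/2}\right)$ in $\mathrm{SL}(2,\mathbb{R})$, and realize $A_\beta$ as the image of the imaginary axis under the translation of length $d$ along their common perpendicular (the unit semicircle). Conjugating the diagonal model of $\beta$ by this translation and multiplying out, after collecting hyperbolic functions, gives the Fenchel trace identity
\begin{equation*}
\mathrm{tr}(\gamma\beta)=2\left[\cosh\tfrac{T(\gamma)}{2}\cosh\tfrac{T(\beta)}{2}\pm\sinh\tfrac{T(\gamma)}{2}\sinh\tfrac{T(\beta)}{2}\cosh d\right],
\end{equation*}
the sign recording the relative orientation of the two axes. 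Since the third boundary $(\gamma\beta)^{-1}$ is parabolic precisely when $|\mathrm{tr}(\gamma\beta)|=2$, selecting the orientation for which the axes are disjoint and cobound an embedded tight pair of pants yields the cusp condition $\cosh\tfrac{T(\gamma)}{2}\cosh\tfrac{T(\beta)}{2}-\sinh\tfrac{T(\gamma)}{2}\sinh\tfrac{T(\beta)}{2}\cosh d=-1$, equivalently
\begin{equation*}
\cosh d=\frac{1+\cosh\frac{T(\gamma)}{2}\cosh\frac{T(\beta)}{2}}{\sinh\frac{T(\gamma)}{2}\sinh\frac{T(\beta)}{2}}.
\end{equation*}
This is exactly the degenerate right-angled hexagon (seam) formula for a pair of pants with boundary lengths $T(\gamma),T(\beta)$ and third boundary length $0$, which reassuringly matches the hexagon trigonometry already used in Lemma \ref{lemma H_{n}}.

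Next I would simplify this into the asserted form by the Weierstrass substitution $p=\tanh\frac{T(\gamma)}{4}$, $q=\tanh\frac{T(\beta)}{4}$. Writing $\cosh\frac{T(\gamma)}{2}=\frac{1+p^2}{1-p^2}$ and $\sinh\frac{T(\gamma)}{2}=\frac{2p}{1-p^2}$ (and likewise for $\beta$), the right-hand side collapses to $\tfrac12\!\left(p^{-1}q^{-1}+pq\right)$, so that $\cosh d=\tfrac12(e^{d}+e^{-d})$ forces $e^{d}=p^{-1}q^{-1}=\coth\frac{T(\gamma)}{4}\coth\frac{T(\beta)}{4}$. Taking logarithms gives $d=\log\coth\frac{T(\gamma)}{4}+\log\coth\frac{T(\beta)}{4}=c(\gamma)+c(\beta)$, as claimed in the hyperbolic–hyperbolic case; one checks both implications read off the same equivalence.

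For the parabolic cases I would repeat the computation in the corresponding normal form: if $\gamma$ is parabolic I take $\gamma=\begin{pmatrix}1&1\\0&1\end{pmatrix}$, whose length-one horocycle sits at Euclidean height $1$, and measure $d$ as the signed distance from this horocycle to $A_\beta$ (or to the second horocycle, when $\beta$ is parabolic as well). The same trace calculation, now with a parabolic factor, produces $\gamma\beta$ parabolic exactly when $d=\log 2+c(\beta)$ (respectively $d=\log 2+\log 2$), in agreement with the convention $c(\gamma)=\log 2$; this is the limiting form of the hexagon relation in which a boundary geodesic is replaced by a cusp and the associated $\cosh(\ell/2)$ is replaced by $1$. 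The main obstacle I anticipate is not any single computation but the bookkeeping of orientations and signs: one must pin down which of the two sign choices in the trace identity corresponds to the axes being disjoint and cobounding an \emph{embedded} (rather than immersed or otherwise degenerate) tight pair of pants, and one must fix the sign convention for $d$ in the parabolic case so that a horocycle lying beneath the opposite axis gives $d>0$. Once the embedded configuration is isolated, the remaining content of the forward direction, namely that the distance condition forces $\langle\gamma,\beta\rangle$ to be discrete and free with quotient a tight pair of pants, follows from the standard nested-geodesic analysis of such generators (in the spirit of Proposition \ref{Basmajian proposition1}), which I would invoke rather than reprove.
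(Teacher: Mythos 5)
First, a point of orientation: the paper does not prove this proposition at all --- it is quoted from Basmajian \cite{Basmajian2} and used as a black box (in Example \ref{incomplete ex}), so there is no internal proof to compare yours against; your attempt has to stand on its own. Its computational core does stand up: the trace identity $\mathrm{tr}(\gamma\beta)=2\bigl[\cosh\tfrac{T(\gamma)}{2}\cosh\tfrac{T(\beta)}{2}\pm\sinh\tfrac{T(\gamma)}{2}\sinh\tfrac{T(\beta)}{2}\cosh d\bigr]$ is correct, the resulting cusp equation is exactly the degenerate pentagon formula that the paper itself invokes in the proof of Proposition \ref{Ex1:tight flute surface }, and the substitution $p=\tanh\tfrac{T(\gamma)}{4}$, $q=\tanh\tfrac{T(\beta)}{4}$ does collapse it to $e^{d}=\coth\tfrac{T(\gamma)}{4}\coth\tfrac{T(\beta)}{4}$, i.e.\ $d=c(\gamma)+c(\beta)$; the parabolic normalization also checks out (for instance, on the thrice-punctured sphere the two unit horocycles sit at distance $\log 4=\log 2+\log 2$).

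The genuine gap is in the sufficiency direction. From $c(\gamma)+c(\beta)=d$ your trace computation only yields that, for one choice of relative orientation, $\gamma\beta$ is parabolic; it does not yield that $\langle\gamma,\beta\rangle$ is discrete, free, and has quotient an \emph{embedded} tight pair of pants with $\gamma,\beta$ as boundary holonomies. You delegate precisely this to ``the standard nested-geodesic analysis \dots\ in the spirit of Proposition \ref{Basmajian proposition1}'', but that proposition concerns whether a nested sequence of geodesics converges to a geodesic or to a point; it says nothing about discreteness of a two-generator group, so the citation cannot carry the weight you put on it. What is actually needed is either a ping-pong/Poincar\'e-polygon argument (side-pairings of the region cut out by the two axes and the seam), or, more cheaply: build the model tight pair of pants with boundary lengths $T(\gamma),T(\beta)$, note that its holonomy gives a pair $(\gamma_0,\beta_0)$ with the same translation lengths, the same axial distance $d$, and the standard orientations, and use the fact that a hyperbolic (resp.\ parabolic) isometry is determined by its axis (resp.\ unit horocycle), translation length and direction to conjugate $(\gamma,\beta)$ onto $(\gamma_0,\beta_0)$, hence $\langle\gamma,\beta\rangle$ onto the model pants group. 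That conjugation step also forces you to settle the orientation issue you flagged but left open, and it is not a side remark: the axial distance $d$ is blind to orientations while parabolicity of $\gamma\beta$ is not, so with anti-standard orientations one can have $c(\gamma)+c(\beta)=d$ while $\gamma\beta$ is hyperbolic and only $\gamma\beta^{-1}$ is parabolic. The stated equivalence is therefore correct only once ``standard generators'' is pinned down (in Basmajian's sense, up to replacing generators by inverses), and any complete proof must make that convention explicit.
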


       We first give an example of incomplete hyperbolic surfaces of infinite type which satisfy the geometric condition $(\star)$ by the following proposition.

       \begin{proposition}\label{Ex1:tight flute surface }
         Let $Y_{0}$ be a tight flute surface with $\sum d_{i}<\infty$ and $\sum |s_{i}|<\infty$, where the sum is taken over all $i\geq0$. If there exists a constant $M>0$ such that
        \begin{equation}\label{condition for incomplete}
        \sinh\big(\sum\limits^{\infty}\limits_{i=n-1}d_{i}\big)\cosh{\frac{ l_{n}}{2}}\leq M,
        \end{equation}
        for all $n\geq1$, then $Y_{0}$ is incomplete and satisfies the geometric condition $(\star)$.
       \end{proposition}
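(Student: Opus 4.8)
The two assertions both come from analysing, in $\mathbb{H}^2$, the nested family of lifts $\{L_i\}_{i\ge 0}$ of the gluing curves $\alpha_i$, obtained by fixing one component of the lift of each $\alpha_i$. For \emph{incompleteness}, the first step is to check that $\{L_i\}$ is genuinely a nested sequence of geodesics with $\lim_{i\to\infty}\rho(L_1,L_i)<\infty$: the hypothesis $\sum d_i<\infty$ controls the perpendicular gaps while $\sum|s_i|<\infty$ prevents the (summable but present) twisting from inflating $\rho(L_1,L_i)$, so the limit of these distances is finite. By Proposition \ref{Basmajian proposition1} the family then converges to a bi-infinite geodesic $\tilde\alpha$, rather than to a point on $\widehat{\mathbb{R}}$. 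Its projection $\alpha$ to $Y_0$ is a single simple open (non-closed) geodesic, and a geodesic ray meeting every $\alpha_i$ has finite length $\rho(L_1,\tilde\alpha)<\infty$ yet leaves every compact subset of $Y_0$. Hence $Y_0$ has a boundary component that is not a closed geodesic, so by the completeness convention of the paper $Y_0$ is incomplete.

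For the condition $(\star)$, note that $\tilde\alpha$ and its $\Gamma_{Y_0}$-translates bound the convex hull of the limit set (the cusps contribute only parabolic points, not boundary geodesics), so $\partial C(\Lambda(\Gamma_{Y_0}))$ projects to the single geodesic $\alpha$. I take the removable set $A$ to be the $\Gamma_{Y_0}$-orbit of a fixed horoball at each cusp of $Y_0$; every component is a horodisk tangent to $\widehat{\mathbb{R}}$ and the family is $\Gamma_{Y_0}$-invariant, so $A$ is removable in the sense of Definition \ref{definition of a removable set}. The crux is a uniform bound $\rho(x,\alpha)\le L$ for $x\in C(\Lambda(\Gamma_{Y_0}))\setminus A$. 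The model computation is for a point $q$ on a lift $L_n$ of $\alpha_n$ at arclength $t\le\ell_n/2$ from the foot of the common perpendicular to $\tilde\alpha$: the trirectangle (Lambert quadrilateral) identity gives $\sinh\rho(q,\tilde\alpha)=\cosh(t)\sinh\rho(L_n,\tilde\alpha)$. Once the summable twisting is accounted for one has $\rho(L_n,\tilde\alpha)\le\sum_{i\ge n-1}d_i$, so hypothesis \eqref{condition for incomplete} yields $\sinh\rho(q,\tilde\alpha)\le\cosh(\ell_n/2)\sinh\big(\sum_{i\ge n-1}d_i\big)\le M$; thus each $\alpha_n$ lies in the $\arcsinh M$-neighbourhood of $\alpha$, uniformly in $n$. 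Since $x\mapsto\rho(x,\tilde\alpha)$ is convex, every point of the convex hull of $\bigcup_n L_n$ satisfies the same bound, which disposes of all of $C(\Lambda(\Gamma_{Y_0}))\setminus A$ except the truncated cusp spikes.

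The main obstacle is the uniform treatment of those truncated spikes, together with verifying the two structural claims above. For the spikes I would invoke the Pair of Pants Theorem (Proposition \ref{Basmajian proposition2}): the distance from the standard (length one) horocycle of the cusp of $P_n$ to the boundary geodesic $\alpha_n$ equals $\log 2+\log\coth(\ell_n/4)$, so a point on the removed horocycle lies within that distance of $\alpha_n$, hence within $\log 2+\log\coth(\ell_n/4)+\arcsinh M$ of $\alpha$. Applying Lemma \ref{Lemma for polygon} to the two ideal-vertex hexagons composing $P_n$ — whose finite vertices lie on $\alpha_n\cup\alpha_{n+1}$ and whose ideal vertices are cut off by the horocycle — then bounds every point of $P_n\setminus A$ by the largest of these vertex distances, and taking $L$ to be their supremum gives $(\star)$, provided the supremum is finite. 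The delicate points are precisely the twist-adjusted estimate $\rho(L_n,\tilde\alpha)\le\sum_{i\ge n-1}d_i$ and the uniformity of the cusp contribution: should the $\ell_n$ fail to stay bounded below, the bound $\log\coth(\ell_n/4)$ degenerates, and one repairs this by enlarging $A$ with the permissible $r$-neighbourhoods of the short curves $\alpha_n$, absorbing the corresponding thin parts exactly as in Matsuzaki's remark.
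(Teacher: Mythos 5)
Your skeleton matches the paper's: incompleteness via Proposition \ref{Basmajian proposition1} (the hypotheses $\sum d_i<\infty$, $\sum|s_i|<\infty$ keep $\rho(L_1,L_i)$ bounded, so the nested lifts converge to a geodesic $\tilde\alpha$ rather than a point), a removable set made of horodisks over the cusps, and the Lambert-quadrilateral identity combined with \eqref{condition for incomplete} as the quantitative engine. Your incompleteness argument (a finite-length path crossing every $\alpha_i$ and leaving every compact set) is sound; the paper additionally shows $\ell_i\to\infty$ via the pentagon formula to identify the added boundary as an open infinite geodesic, but metric incompleteness does not require that.

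The genuine gap is the sentence ``since $x\mapsto\rho(x,\tilde\alpha)$ is convex, every point of the convex hull of $\bigcup_n L_n$ satisfies the same bound.'' Both readings of it fail. If $L_n$ is the complete geodesic lift of $\alpha_n$, then $\rho(\cdot,\tilde\alpha)$ is indeed convex but \emph{unbounded} on each $L_n$: only the fundamental segment of length $\ell_n$ around the foot of the common perpendicular lies within $\arcsinh M$ of $\tilde\alpha$, while the ends of $L_n$ run off toward \emph{other} $\Gamma_{Y_0}$-translates of $\tilde\alpha$; so the premise of the maximum principle is false. If instead you mean the quotient statement (each curve $\alpha_n\subset Y_0$ lies within $\arcsinh M$ of $\alpha$), the relevant function is the distance to $\partial C(\Lambda(\Gamma_{Y_0}))$, i.e.\ to the union of \emph{all} translates of $\tilde\alpha$; this is an infimum of convex functions and satisfies no maximum principle (for the region between two disjoint geodesics, the distance to the boundary vanishes on the two geodesics but is positive in the middle of their hull). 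Restricting to the fundamental segments does give the bound on \emph{their} hull, but then you still owe the proof that this hull together with the cusp horoballs covers the convex core; supplying exactly that coverage is what the paper does, by cutting the zero-twist surface along its reflection symmetry into the geodesically convex region $R$, building the pairwise disjoint spikes $S_i$ (with the careful choice of $E_i'$), and applying the generalized Lemma \ref{Lemma for polygon} to reduce everything to the explicit vertex distances $\rho(E_n,\alpha')$ and $\rho(G_n,\alpha')$. A second, smaller, error: the ``twist-adjusted'' estimate $\rho(L_n,\tilde\alpha)\le\sum_{i\ge n-1}d_i$ is false in general, since shearing can separate $L_n$ from $\tilde\alpha$ by an amount not controlled by the $d_i$ alone; the correct handling, as in the paper, is to add the tail $\sum_{i\ge n}|s_i|$ to the distance bounds, which is harmless because it is finite. (Your use of Proposition \ref{Basmajian proposition2} for the cusp depth is a legitimate alternative to the paper's spike computation, and your worry about short $\ell_n$ is moot: the hypotheses force $\ell_n\to\infty$, and the finitely many exceptional pants contribute a finite supremum.)
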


       \begin{proof}

         By the assumptions that $\sum d_{i}<\infty$ and
         $\sum |s_{i}|<\infty$, it follows from
         Proposition \ref{Basmajian proposition1} that the nested sequence $\{\alpha_{i}\}$ converges to a geodesic. We denote it by $\alpha$. We claim that the length of $\alpha$ must be infinity. Otherwise, assume that the length of $\alpha$ is a finite positive number $l$. Then $l_{i}\leq l+1$ for all $i\geq N$, where $N$ is a sufficiently big integer. By the formula for a pentagon with four right angles and an angle of zero (see \cite{Buser}), we have
          \begin{equation*}
          \cosh{d_{i}}=\frac{1+\cosh{\frac{l_{i}}{2}}\cosh{\frac{l_{i+1}}{2}}}
          {\sinh{\frac{l_{i}}{2}}\sinh{\frac{l_{i+1}}{2}}}
          \geq \frac{1}{\sinh^{2}{\frac{l+1}{2}}}+\coth^{2}{\frac{l+1}{2}},
          \end{equation*}
          for all $i\geq N$, which contradicts the assumption that $\sum{d_{i}}<\infty$.
         Therefore, $\ell_{i}\rightarrow \infty$, as $i\rightarrow \infty$ and $Y_{0}$ is an incomplete hyperbolic surface with a simple open infinite geodesic boundary.

         Now we prove that $Y_{0}$ satisfies the geometric condition $(\star)$ if it satisfies the condition
        \eqref{condition for incomplete}.

        First we consider the special case that $s_{i}=0$ for all $i\geq 0$. In this case, $Y_{0}$ can be constructed by pasting two copies of the geodesically convex ideal region $R$ with infinitely many geodesic edges along all the edges of common lengths  except $\alpha'$, which is half of the geodesic $\alpha$.

\begin{figure}[ht]
\begin{tikzpicture}[scale=1.25]
\draw (-4.5,3.27) .. controls (-4.7295,0.9485) and (-5.24,0.42) .. (-7.6,-0.05);
\draw (-7.6,-0.05) .. controls (-5.23,0.25) and (-1.2655,0.16) .. (0.51,0.0785);
\draw (-4.5,3.27) .. controls (-4.3445,1.5705) and (-4.0125,1.7425) .. (-3.79,1.721);
\draw (-3.79,1.721) .. controls (-3.531,1.7495) and (-3.224,2.37) .. (-3.0285,4.45);
\draw (-3.0285,4.45) .. controls (-2.876,1.9965) and (-2.6995,1.3355) .. (-1.93,1.304);
\draw (-1.93,1.304) .. controls (-1.6625,1.2735) and (-0.9795,2.4205) .. (-0.827,5.1);
\draw (-0.827,5.1) .. controls (-0.6715,2.69) and (-0.591,2.6015) .. (-0.31,2.44);
\draw (-0.31,2.44) .. controls (0.063,2.351) and (0.313,3.664) .. (0.4115,6.025);
\draw[red] (-5.58,0.62) .. controls (-5.48,0.44) and (-5.44,0.29) .. (-5.48,0.1) node (v2) {};
\draw[red] (-3.79,1.721) .. controls (-3.9585,0.7712) and (-3.9,0.32) .. (-3.9,0.1485) node (v1) {};
\draw [red](-1.93,1.304) .. controls (-2.047,0.8375) and (-2.01,0.45) .. (-2.01,0.1485);
\draw[red] (-0.31,2.44) .. controls (-0.597,2.0245) and (-0.56,1.05) .. (-0.61,0.1385);
\node at (-2.96,-0.3) {$d_{1}$};
\node at (-1.25,-0.34) {$d_{2}$};
\draw (0.51,0.0785) .. controls (0.54,2.75) and (0.63,3.5449) .. (0.82,6.5155);
\draw[dotted, ultra thick] (-0.14,1.6) -- (0.31,1.6);
\draw[dotted, ultra thick] (-0.29,-0.35) -- (0.16,-0.35);
\node at (-3.69,0.74) {$\alpha_{1}'$};
\node at (-1.8,0.68) {$\alpha_{2}'$};
\node at (-0.35,1.28) {$\alpha_{3}'$};
\node at (-3.8488,2.0095) {$E_{1}$};
\node at (-1.8955,1.564) {$E_{2}$};
\node at (-0.277,2.69) {$E_{3}$};
\node at (-3.69,0.3) {$F_{1}$};
\node at (-1.84,0.26) {$F_{2}$};
\node at (-0.43,0.26) {$F_{3}$};
\node at (-5.63,0.83) {$E_{0}$};
\node at (-5.62,-0.11) {$F_{0}$};
\draw (-3.79,1.721) .. controls (-3.8782,0.77) and (-4.9085,0.823) .. (-5.104,1.083);
\node at (-5.3203,1.2715) {$G_{1}$};
\draw[blue] (-5.104,1.083) .. controls (-4.997,0.82) and (-4.957,0.39) .. (-4.957,0.1385);
\node at (-4.707,0.3) {$H_{1}$};
\draw (-2.21,1.344) .. controls (-2.5264,0.5966) and (-3.9855,0.6345) .. (-3.79,1.721);
\node at (-2.298,1.6245) {$G_{2}$};
\draw[blue] (-2.2095,1.353) .. controls (-2.384,0.7275) and (-2.3525,0.37) .. (-2.3525,0.1485);
\node at (-2.6725,0.3) {$H_{2}$};
\draw (-0.31,2.44) .. controls (-0.5564,1.9054) and (-1.084,1.8748) .. (-1.4025,2.036);
\draw[blue] (-1.391,2.0245) .. controls (-1.2155,1.5135) and (-1.2555,1.05) .. (-1.2655,0.1385);
\node at (-1.561,2.176) {$G_{3}$};
\node at (-1.0655,0.26) {$H_{3}$};
\draw (-3.9,0.1485) -- (-3.75,0.01) -- (-3.06,0.01) -- (-3.01,-0.09) -- (-2.94,0.01) -- (-2.11,0.01) -- (-2.01,0.1485);
\draw (-2.01,0.1485) -- (-1.9,0) -- (-1.32,0) -- (-1.26,-0.14) -- (-1.2,0) -- (-0.71,0) -- (-0.61,0.1385);
\draw[red] (-3.9,0.25) -- (-4,0.25) -- (-4,0.1485);
\draw[blue] (-2.3525,0.25) -- (-2.4525,0.25) -- (-2.4525,0.1485);
\draw [red](-2.11,0.1485) -- (-2.11,0.25) -- (-2.01,0.25);
\draw[blue] (-1.3655,0.1385) -- (-1.3655,0.24) -- (-1.2655,0.24);
\draw[red] (-0.71,0.1385) -- (-0.71,0.24) -- (-0.61,0.24);
\draw (-5.0443,1.2115) -- (-4.9043,1.1215) -- (-5.004,0.9833);
\draw (-3.93,1.7325) -- (-3.9497,1.5943) -- (-3.8097,1.5843);
\draw (-2.2925,1.413) -- (-2.3698,1.3115) -- (-2.27,1.2515);
\draw (-2.0545,1.3135) -- (-2.0897,1.187) -- (-1.9609,1.1658);
\draw (-1.3525,2.163) -- (-1.2428,2.113) -- (-1.2928,1.9745);
\draw (-0.3985,2.5015) -- (-0.4697,2.4033) -- (-0.3697,2.3303);
\draw[red] (-5.6703,0.59) -- (-5.6203,0.49) -- (-5.53,0.541);
\draw[red] (-5.46,0.2) -- (-5.58,0.2) -- (-5.58,0.1);
\node at (-7.854,-0.077) {$A_{0}$};
\node at (-4.5,3.5505) {$A_{1}$};
\node at (-3.017,4.7075) {$A_{2}$};
\node at (-0.8,5.323) {$A_{3}$};
\node at (0.657,-0.038) {$A$};
\node at (0.843,6.746) {$A'$};
\node at (0.7725,1.8635) {$\alpha'$};
\draw (0.506,0.1925) -- (0.3925,0.192) -- (0.3925,0.077);
\draw[dashed] (-5.1075,1.0825) .. controls (-4.6824,0.9246) and (-4.155,1.1272) .. (-3.79,1.721);
\draw [dashed](-3.79,1.721) .. controls (-3.6905,1.0482) and (-2.6209,0.8894) .. (-2.1985,1.3545);
\draw[dashed] (-1.381,2.031) .. controls (-0.7565,2.074) and (-0.6036,2.1925) .. (-0.2985,2.462);
\node at (-4.54,1.2755) {$\gamma_{1}$};
\node at (-3.0555,1.2785) {$\gamma_{2}$};
\node at (-0.88,2.303) {$\gamma_{3}$};
\draw(-5.59,0.641) .. controls (-5.06,0.44) and (0.3,0.4) .. (0.5,0.4);
\node at (0.7,0.42) {$I_{0}$};
\draw (0.5,0.52) -- (0.4,0.52) -- (0.4,0.42);
\draw (-5.48,0.12) -- (-5.4,-0.04) -- (-4.78,-0.04) -- (-4.7,-0.14) -- (-4.64,-0.04) -- (-4.04,0) -- (-3.88,0.14);
\node at (-4.68,-0.34) {$d_{0}$};
\node at (-5.28,0.36) {$\alpha_{0}'$};
\draw[blue] (-4.95,0.24) -- (-5.06,0.24) -- (-5.06,0.14);
\end{tikzpicture}
\caption{\small{The geodesically convex ideal region $R$ (where  each spike $S_{n}$ is an open subset of $R$, the dashed lines are geodesics $\gamma_{n}$ between $G_{n}$ and $E_{n}'$ and the angle at $E_{n}'$ between the finite edge $\overline{G_{n}E_{n}'}$ of the spike $S_{n}$ and the common perpendicular $\alpha_{n}'$ is zero. In this figure, $E_{1}'=E_{2}'=E_{1}, E_{3}'=E_{3}$).}}
            \label{fig:convex ideal region R}
		\end{figure}

         Now we consider the geodesically convex ideal region $R$ with ideal vertices $\{A_{i}\}^{\infty}_{i=0}$ corresponding to the cusps of $Y_{0}$, as shown in Figure
          \ref{fig:convex ideal region R}. Denote by $A, A'$ the two endpoints of $\alpha'$ (where $A'$ is an ideal vertex). Let $\alpha_{0}'$ be half of the simple closed curve $\alpha_{0}$ and let $\alpha_{i}'$ be the common perpendicular between the infinite geodesic edge $\overline{A_{i}A_{i+1}}$ and the infinite geodesic edge $\overline{A_{0}A}$ for $i\geq1$.

          It suffices to find a constant $M'>0$ and a disjoint union $S$ of spikes in $R$ such that any point in $R\setminus S$ is within the distance $M'$ of $\alpha'$.

         To see this, we denote by $E_{i}, F_{i}$ the two endpoints of $\alpha_{i}'$ (where $F_{i}$ lies in the edge $\overline{A_{0}A}$). Since the length of $\alpha_{i}$ is $l_{i}$, then the hyperbolic length of $\alpha_{i}'$ is $\frac{1}{2}\ell_{i}$. For the ideal vertex $A_{0}$, we take a spike $S_{0}$ which has a finite edge $\alpha_{0}'$ of length $\frac{1}{2}l_{0}=\frac{1}{2}$. For each ideal vertex $A_{i}$ ($i\geq 1$), we take a spike $S_{i}$ such that it goes through the point $E_{i}'$, where $E_{i}'=E_{i}$ if $l_{i}\geq l_{i-1}$, and $E_{i}'=E_{i-1}$ if $l_{i}<l_{i-1}$. Denote by $G_{i}$ the third vertex of $S_{i}$ except the two vertices $A_{i}$ and $E_{i}'$ (note that each spike $S_{i}$ is an open subset of $R$ and the finite edge $\overline{E_{i}'G_{i}}$ is not a geodesic). Draw a geodesic segment which starts at $G_{i}$ and intersects the edge $\overline{A_{0}A}$ perpendicularly at the point $H_{i}$ for $i\geq1$.

         We claim that each spike $S_{i}$ is disjoint from any other spikes. Indeed, we represent the geodesically convex ideal region $R$ in the upper half-plane model of $\mathbb{H}^{2}$ (see Figure \ref{fig:convex ideal region R in H2}). It suffices to consider the position of $S_{i+1}$ in the special case that $l_{i}=l_{i+1}$. In this case,  the vertex $G_{i+1}$ of $S_{i+1}$ coincides with the point $E_{i}$ (see Figure \ref{fig:spikes in H2}). By the construction of $S_{i}$, we have that $S_{i}$ and $S_{j}$ are disjoint for all $i\not= j$.

\begin{figure}[ht]
\begin{tikzpicture}[scale=0.81]
\draw[->] (-3.5,0.5) -- (8.7,0.5);
\draw[->] (-3.5,0.5) -- (-3.5,11.9979);
\node at (6.85,9.5798) {$\mathbb{H}^{2}$};
\draw[very thick](7.3,0.5) arc (0:90:10.8);
\draw [very thick](5.5,0.5) arc (180:0:0.3);
\node at (-4,11.2798) {$A$};
\node at (-3.6,0.05) {$A_{0}$};
\node at (-0.4872,0) {$A_{1}$};
\node at (1.2171,0) {$A_{2}$};
\node at (4.0234,0) {$A_{3}$};
\node at (5.6427,-0.05) {$A_{4}$};
\node at (7.3914,0) {$A'$};
\node at (-3.9383,4.2809) {$F_{1}$};
\node at (-4,6.6607) {$F_{2}$};
\node at (-4,8.6809) {$F_{3}$};
\draw [fill=gray,opacity=0.5] (-0.5,1.13) ellipse (0.638 and 0.638);
\draw [fill=gray,opacity=0.5] (1.1,1.5) ellipse (1 and 1);
\draw [fill=gray,opacity=0.5] (4.1,1.12) ellipse (0.62 and 0.62);
\draw[fill=gray, opacity=0.5] (-3.5,0.5) arc (-90:90:1.1);

\draw[very thick][fill=white] (-0.5,0.5) arc (180:0:0.8);
\draw[very thick][fill=white] (1.1,0.5) arc (180:0:1.5);
\draw[very thick][fill=white] (4.1,0.5) arc (180:0:0.7);
\draw[very thick][fill=white] (-3.5,0.5) node (v4) {} arc (-180:-360:1.5);

\draw [blue](0.2,0.5) arc (0:90:3.7);
\draw [blue](2.4,0.5) arc (0:90:5.9);
\draw[blue] (4.74,0.5) arc (0:90:8.2377);

\draw [red][dashed](-0.743,0.5) arc (0:90:2.757);
\draw[red] [dashed](2.2,0.5) arc (0:90:5.7);
\draw[red] [dashed](3.775,0.5) arc (0:90:7.275);
\draw (0.3,0.5) -- (0.018,1.8);
\draw (2.6,0.5) -- (2.1383,2.3468);
\draw (4.8,0.5) -- (4.68,1.56);
\node at (-3.9383,3.4447) {$H_{1}$};
\node at (-3.9383,6.0245) {$H_{2}$};
\node at (-4,7.8043) {$H_{3}$};
\node at (-1.4872,4.1457) {$\alpha_{1}'$};
\node at (-0.1849,5.9689) {$\alpha_{2}'$};
\node at (1.4703,7.8201) {$\alpha_{3}'$};
\draw[blue] (-3.5,8.9043) -- (-3.35,8.9043) -- (-3.35,8.7043);
\draw [blue](-3.5,6.5639) -- (-3.35,6.5639) -- (-3.35,6.4139);
\draw[blue] (-3.5,4.3575) -- (-3.35,4.3575) -- (-3.35,4.2075);
\draw[blue] (0.1809,1.1749) -- (0.2692,1.1949) -- (0.2297,1.3213);
\draw [blue](2.2505,1.8436) -- (2.38,1.87) -- (2.34,1.9936);
\draw[blue] (4.73,1.0734) -- (4.8426,1.088) -- (4.838,1.23);
\draw [very thick](-3.5,11.3064) -- (-3.5,0.5);
\node at (0.4043,1.8181) {$E_{1}$};
\node at (-0.8021,2.0149) {$G_{1}$};
\node at (2.5022,2.367) {$E_{2}$};
\node at (1.5458,2.1085) {$G_{2}$};
\node at (5.016,1.5351) {$E_{3}$};
\node at (3.9926,1.967) {$G_{3}$};
\node at (-2.3085,1.5947) {$E_{0}$};
\node at (-3.9383,2.8564) {$F_{0}$};
\draw (-2.4702,2.08) -- (-2.3819,2.122) -- (-2.3319,1.9564);
\draw (-3.5,2.8479) -- (-3.35,2.8479) -- (-3.35,2.6979);
\node at (0.3341,0) {$O_{1}$};
\node at (2.6448,-0.0617) {$O_{2}$};
\node at (4.8768,0) {$O_{3}$};
\node at (-1.9936,0) {$O_{0}$};
\draw (-2,0.4958) -- (-2,0.5617);
\draw[dotted, ultra thick] (6.4104,-0.0625) -- (6.8613,-0.0625);
\draw [dotted, ultra thick](6.4945,0.7509) -- (6.9096,0.7509);
\draw [dotted, ultra thick] (-3.0575,10.1895) -- (-3.0575,9.747);
\draw [dotted, ultra thick] (-4,10.132) -- (-4,9.747);
\node at (3.1244,9.9704) {$\alpha'$};
\draw (-3.5,0.5) .. controls (-0.5,1.2532) and (-0.1403,1.2043) .. (1.3236,1.5256);
\draw (-3.5,0.5) .. controls (1.8702,1.9872) and (1.7363,1.8149) .. (3.2916,2.2468);
\draw (-3.5,0.5) .. controls (3.5489,1.1234) and (4.3766,1.1851) .. (5.766,1.2714);
\node at (-2.6915,2.6915) {$\alpha_{0}'$};
\end{tikzpicture}
\caption{\small{The geodesically convex ideal region $R$ in $\mathbb{H}^{2}$, where the boundary of $R$ is drawn in bold lines, and $O_{i}$ is the Euclidean center of the semi-circle corresponding to the infinite geodesic edge $\overline{A_{i}A_{i+1}}$ of $R$ ( in this figure, $E_{1}'=E_{2}'=E_{1}, E_{3}'=E_{3}$).}}
            \label{fig:convex ideal region R in H2}
		\end{figure}
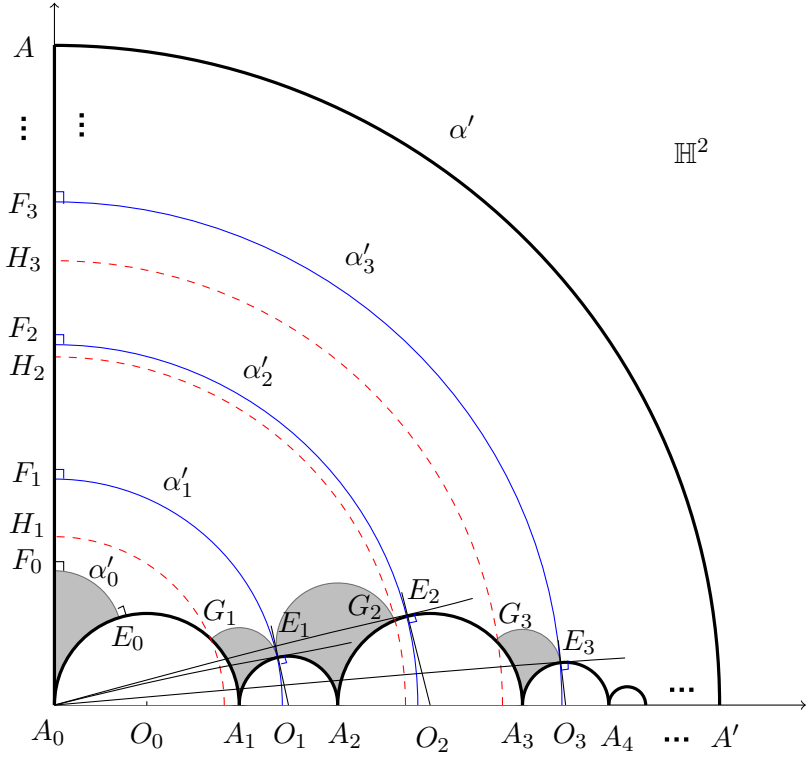

 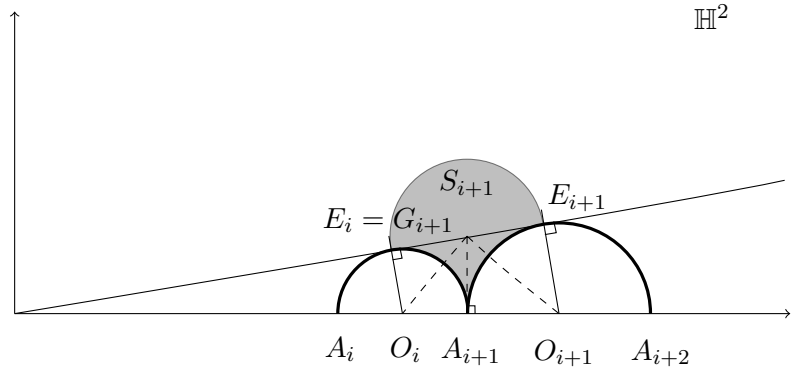
\begin{figure}[ht]
\begin{tikzpicture}[scale=0.85]
\draw[fill=gray,opacity=0.5]  (7,1.209) node (v4) {} ellipse (1.192 and 1.2);
\draw[very thick][fill=white] (5,0) arc (180:0:1.008);
\draw[very thick][fill=white] (7.008,0) node (v5) {} arc (180:0:1.415);
\draw [->] (0,0)--(12,0);
\draw [->] (0,0) node (v1) {}--(0,4.7118);
\draw (6,0) node (v2) {} -- (5.79,1.212);
\draw (8.423,0) node (v3) {} -- (8.15,1.622);
\node at (8.704,1.811) {$E_{i+1}$};
\node  at (5.7826,1.445) {$E_{i}=G_{i+1}$};
\node at (7.0588,-0.5912) {$A_{i+1}$};
\node at (5.037,-0.55) {$A_{i}$};
\draw (5.96,1.0318) -- (5.9946,0.8648) -- (5.8474,0.84);
\draw (8.3414,1.439) -- (8.3802,1.2626) -- (8.2038,1.233);
\node at (10.79,4.6356) {$\mathbb{H}^{2}$};
\node at (10,-0.6176) {$A_{i+2}$};
\node at (8.5,-0.6176) {$O_{i+1}$};
\node at (6.049,-0.5588) {$O_{i}$};
\draw[dashed] (7,1.209) .. controls (7,0.854) and (7,0.549) .. (7,0);
\draw[dashed] (7,1.209) .. controls (6.5,0.622) and (6.5,0.622) .. (6,0);
\draw [dashed](7,1.209) .. controls (7.5,0.744) and (7.5,0.744) .. (8.423,0);
\draw (7,0.122) -- (7.122,0.122) -- (7.124,0);
\draw (0,0) .. controls (3,0.5) and (10.378,1.744) .. (11.914,2.08);
\node at (7,2.0296) {$S_{i+1}$};
\end{tikzpicture}
\caption{\small{The position of the spike $S_{i+1}$ when $l_{i}=l_{i+1}$ ( in this figure, $E_{i+1}'=E_{i+1}$).} }
            \label{fig:spikes in H2}
		\end{figure}

       Let $c_{i}$ be the hyperbolic distance between $H_{i}$ and $F_{i}$ for $i\geq1$. Let $d$ be the hyperbolic distance between $\alpha_{0}$ and $\alpha'$. Then $\sum c_{i}
         <\sum d_{i}=d<\infty$.

       Let $S=\cup_{i\geq0} S_{i}$.  We need to show that there exists a constant $M'>0$ such that any point in $R\setminus S$ is within the distance $M'$ of $\alpha'$.

           Note that Lemma \ref{Lemma for polygon} can be generalized to the case of a  geodesically convex simply connected region in $\mathbb{H}^{2}$ with infinitely many geodesic edges, and each geodesic arc $\gamma_{n}$ connecting $E_{n}$ and $G_{n}$ is contained in $S_{n}$, it suffices to consider $\rho(E_{n},\alpha')$ for $n\geq0$ and $\rho(G_{n},\alpha')$ for $n\geq1$, where $\rho$ is the hyperbolic distance on $R$.

         Now we compute $\rho(E_{n}, \alpha')$. Draw a geodesic from $E_{n}$ to $\alpha'$ such that it intersects $\alpha'$ at the point $I_{n}$ perpendicularly. Then $\rho(E_{n},\alpha')=\rho(E_{n},I_{n})$. The geodesic segment $\overline{E_{n}I_{n}}$ is an edge of the trirectangle with consecutive vertices $A, I _{n}, E_{n}, F_{n}$ (see the trirectangle with consecutive vertices $A, I _{0}, E_{0}, F_{0}$ in Figure
         \ref{fig:convex ideal region R} as an example). By the formula for trirectangles (see \cite{Buser}), for each $n\geq0$, we have
         \begin{equation}\label{eq:d(E)}
         \sinh{\rho(E_{n},\alpha')}
        =\sinh{(\sum\limits^{\infty}_{i=n}d_{i})}\cosh{\frac{\ell_{n}}{2}}.
         \end{equation}

          To estimate $\rho(G_{n},\alpha')$, we need to estimate the length $b_{n}$ of the geodesic segment $\overline{G_{n}H_{n}}$.
         Note that $b_{n}\leq \frac{l_{n}}{2}$ if $E_{n}'=E_{n}$ and $b_{n}\leq \frac{l_{n-1}}{2}$ if $E_{n}'=E_{n-1}$.
          Then $b_{n}\leq \max\{\frac{l_{n}}{2},\frac{l_{n-1}}{2}\}$ for all $n\geq 1$.

          Similarly, we compute $\rho(G_{n},\alpha')$ in a trirectangle.
          For each $n\geq2$,
           \begin{equation}\label{eq:d(G)}
          \begin{split}
           \sinh{\rho(G_{n},\alpha')}
         &=\sinh{(\sum\limits^{\infty}_{i=n}d_{i}+c_{n})}
         \cosh{b_{n}}\\
           &=\big(\sinh(\sum\limits^{\infty}_{i=n}d_{i})\cosh c_{n}+
         \cosh(\sum\limits^{\infty}_{i=n}d_{i})\sinh c_{n}\big)\cosh b_{n}
          \end{split}
         \end{equation}
          \begin{equation*}
         \begin{split}
            &\leq 2\cosh{d}\sinh (\sum\limits^{\infty}_{i=n-1}d_{i}) \max\{\cosh{\frac{l_{n}}{2}}, \cosh{\frac{l_{n-1}}{2}}\}\\
         &\leq2\cosh{d} \max\{\sinh (\sum\limits^{\infty}_{i=n-1}d_{i})\cosh{\frac{l_{n}}{2}}, \sinh (\sum\limits^{\infty}_{i=n-2}d_{i})\cosh{\frac{l_{n-1}}{2}}\} .
        \end{split}
         \end{equation*}

        By $\eqref{eq:d(E)}$, $\eqref{eq:d(G)}$ and the given condition $\eqref{condition for incomplete}$, for all $n\geq1$, we get
       \begin{equation*}
       \sinh{ \rho(E_{n}, \alpha')}\leq M,
       \end{equation*}
       and for all $n\geq2$, we have
         \begin{equation*}
       \sinh{ \rho(G_{n}, \alpha')}\leq
        2M\cosh{d}.
         \end{equation*}
        Besides,
         \begin{equation*}
         \begin{split}
        \sinh{\rho(E_{0},\alpha')}&=\sinh{d}\cosh{\frac{1}{2}}\leq
        \cosh{d}\cosh{\frac{1}{2}},\\
        \sinh{\rho(G_{1},\alpha')}&=\sinh({c_{1}+\sum\limits_{i=1}\limits^
         {\infty}d_{i}})\cosh{\frac{l_{1}}{2}}
         \leq\cosh{d}\cosh{\frac{l_{1}}{2}}.
         \end{split}
         \end{equation*}

         Note that $x<\sinh{x}$ for all $x>0$. Let $M'=2\cosh d (M+\cosh\frac{1}{2}+\cosh\frac{l_1}{2})$. Then any point in $R\setminus S$ is within the distance $M'$ of $\alpha'$.

          Now we consider the general case that  $\sum |s_{i}|<\infty$. Denote by $\bar{\rho}(E_{n}, \alpha')$ (resp. $\bar{\rho}(G_{n}, \alpha')$) the hyperbolic distance between $E_{n}$ (resp. $G_{n}$) and $\alpha'$. Then
         \begin{equation*}
        \bar{\rho}(E_{n}, \alpha')
   \leq\rho(E_{n}, \alpha')+\sum\limits^{\infty}\limits_{i=n}|s_{i}|,
      \end{equation*}
      \begin{equation*}
   \bar{\rho}(G_{n}, \alpha')
   \leq\rho(G_{n}, \alpha')+\sum\limits^{\infty}\limits_{i=n}|s_{i}|.
      \end{equation*}
     Note that $\sum|s_{i}|<\infty$,  the statement is also true for the general case.
    This completes the proof of this proposition.
       \end{proof}

       \begin{example}\label{incomplete ex}
       Now we construct an incomplete hyperbolic surface $Y_{0}$ of infinite type which satisfies the geometric condition $(\star)$.

       Consider a tight flute surface with the sequence $\{P_{i}\}_{i\geq0}$ of glued generalized hyperbolic pairs of pants. Since each pair of pants $P_{i}$ is tight for $i\geq0$, it follows from Proposition
       \ref{Basmajian proposition2} that $c(\alpha_{i})+c(\alpha_{i+1})=d_{i}$ for $i\geq0$.
    For each $n\geq2$, we obtain that
      \begin{equation*}
    \begin{split}
   \sinh\big(\sum\limits^{\infty}\limits_{i=n-1}d_{i}\big)
   \cosh{\frac{ l_{n}}{2}}
     &=\sinh{\{\sum\limits^{\infty}\limits_{i=n-1}\big(c(\alpha_{i})+c(\alpha_{i+1})\big)\}} \cosh{\frac{ l_{n}}{2}}\\
    &=\sinh{\{\log{\coth{\frac{l_{n-1}}{4}}}+
    2\sum\limits^{\infty}\limits_{i=n}
    \log{\coth{\frac{l_{i}}{4}}}\}} \cosh{\frac{ l_{n}}{2}}\\
    &\leq \sinh{( 2\sum\limits^{\infty}\limits_{i=n-1}
    \log{\coth{\frac{l_{i}}{4}}})}\cosh{\frac{ l_{n}}{2}}.
     \end{split}
     \end{equation*}

     Note that the sequence $\{d_{0}, d_{1}, d_{2}, ...\}$ is completely determined by the sequences $\{l_{0},l_{1},l_{2}, ...\}$. Let $Y_{0}$ be a tight flute surface with $\sum|s_{i}|<\infty$ and the sequence $\{l_{0},l_{1},l_{2}, ...\}$ satisfying
     $\log{\coth{\frac{l_{i}}{4}}}=\frac{1}{2^{i}}$ for each $i\geq0$. Then
     \begin{equation*}
     d=\sum\limits_{i=0}\limits^{\infty}d_{i}=\log2+2\sum\limits^{\infty}\limits_{i=1}
    \log{\coth{\frac{l_{i}}{4}}}
    =\log2+2\sum\limits^{\infty}\limits_{i=1}\frac{1}{2^{i}}=\log2+2<\infty.
    \end{equation*}

     For $n=1$,   $\sinh\big(\sum\limits^{\infty}\limits_{i=n-1}d_{i}\big)
   \cosh{\frac{ l_{n}}{2}}=\sinh d
   \cosh{\frac{ l_{1}}{2}}<\infty$.
    For $n\geq2$, we get
    \begin{equation*}
    \begin{split}
    \sinh{(2\sum\limits^{\infty}\limits_{i=n-1}
    \log{\coth{\frac{l_{i}}{4}}})}\cosh{\frac{ l_{n}}{2}}
    &=\sinh{(2\sum\limits^{\infty}\limits_{i=n-1}
    \frac{1}{2^{i}})} \cosh{\frac{ l_{n}}{2}}\\
     &=\sinh{ \frac{8}{2^{n}}} \cosh{\frac{ l_{n}}{2}}\\
    &=\sinh{(8\log{\coth{\frac{l_{n}}{4}}})} \cosh{\frac{ l_{n}}{2}}.
    \end{split}
    \end{equation*}

   Observe that $l_{i}=4\arcoth{e^{\frac{1}{2^{i}}}}\rightarrow \infty$ as $i\rightarrow \infty$, $\coth{x}\rightarrow 1$, $\sech{x}\rightarrow 0$ as $x\rightarrow \infty$ and $(\coth{x})'=-\csch^{2}{x}$, $(\sech{x})'=-\sech{x}\tanh{x}$, we have
    \begin{equation*}
     \begin{split}
    \lim_{x\rightarrow \infty}\sinh({8\log{\coth{\frac{x}{4}}}})\cosh{\frac{x}{2}}&=
    \lim_{x\rightarrow \infty}\frac{\sinh(8{\log{\coth{\frac{x}{4}}}})}
    {\sech{\frac{x}{2}}}\\
    &=\lim_{x\rightarrow \infty} \frac{4\cosh{(8\log\coth{\frac{x}{4}})}\csch^{2}\frac{x}{4}}
    {\coth{\frac{x}{4}}\sech{\frac{x}{2}}\tanh{\frac{x}{2}}}\\
    &=\lim_{x\rightarrow \infty} 8(\coth{\frac{x}{2}})^{2}\\
    &=8.
    \end{split}
    \end{equation*}

   Hence, the surface $Y_{0}$  constructed above satisfies the condition \eqref{condition for incomplete}. By Proposition
   \ref{Ex1:tight flute surface }, it is incomplete and satisfies the geometric condition $(\star)$.
       \end{example}

   To construct some other examples of complete hyperbolic surfaces of infinite type which satisfy the geometric condition $(\star)$, we prove the following proposition.

       \begin{proposition}\label{EX1}
       Let $X_{0}$ be a flute surface of which all the pants holes that have not been glued are boundary components and the series $\sum d_{i}$ is divergent. If there exists a positive constant $L$ such that
       \begin{equation*}
       \sup\limits_{n\in \mathbb{N}}\{a_{n}, b_{n}, c_{n}, d_{n}\} \leq L,
       \end{equation*}
       then $X_{0}$ is complete and satisfies the geometric condition $(\star)$.
       Here $a_{n}$, $b_{n}$, $c_{n}$, $d_{n}$ satisfy that
       \begin{equation*}
       \begin{split}
       \cosh{a_{n}}&=\frac{\cosh{\alpha_{n+1}'}+\cosh{\alpha_{n}'}\cosh{\beta_{n}'}}
       {\sinh{\alpha_{n}'}\sinh{\beta_{n}'}},\\
       \cosh{b_{n}}&=\frac{\cosh{\alpha_{n}'}+\cosh{\alpha_{n+1}'}\cosh{\beta_{n}'}}
       {\sinh{\alpha_{n+1}'}\sinh{\beta_{n}'}},\\
       \sinh{c_{n}}&=\sinh{a_{n}}\cosh{\alpha_{n}'},\\
        \sinh{d_{n}}&=\sinh{b_{n}}\cosh{\alpha_{n+1}'},
          \end{split}
       \end{equation*}
     where $\alpha_{n}'=\frac{1}{2}\ell_{\alpha_{n}}(X_{0})$, $\beta_{n}'=\frac{1}{2}\ell_{\beta_{n}}(X_{0})$, $\alpha_{0}\cup\beta_{0}=\partial P_{0}\cap \partial X_{0}$, $\beta_{n}=\partial P_{n}\cap\partial X_{0}$ for $n\geq1$, as shown in Figure \ref{fig:EX1}.
       \end{proposition}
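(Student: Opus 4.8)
The plan is to treat the two assertions---completeness and the geometric condition $(\star)$---separately, reducing each to estimates of the type already established in Lemma~\ref{Lemma for polygon}, Lemma~\ref{lemma H_{n}} and Example~\ref{Ex:my}.

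For completeness, I would argue via Hopf--Rinow. Since every free pants hole is a closed boundary geodesic rather than a cusp, each pair of pants $P_i$ is compact. Fixing a basepoint $p\in P_0$, any path from $p$ to a point of $P_n$ must cross the gluing geodesics $\alpha_1,\dots,\alpha_n$ in succession, so its length is at least $\rho(\alpha_1,\alpha_n)\geq\sum_{i=1}^{n-1}d_i$. Because $\sum_i d_i$ diverges, for each $r>0$ only finitely many indices $n$ satisfy $\sum_{i=1}^{n-1}d_i\leq r$ (equivalently, by Proposition~\ref{Basmajian proposition1} the lifts of the nested sequence $\{\alpha_i\}$ do not converge to a geodesic, so the curves recede to infinity). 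Hence the open ball $B(p,r)$ is contained in the finite, and therefore compact, union $\bigcup_{n\leq N}P_n$; any closed ball is a closed subset of such a compact set and is thus compact, so Hopf--Rinow yields completeness.

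For the geometric condition, I would decompose each $P_n$ into its two isometric right-angled hexagons, cut along the three common perpendiculars between the boundary components. Each such hexagon $H_n$ has pairwise non-adjacent edges of lengths $\alpha_n'=\tfrac12\ell_{\alpha_n}(X_0)$, $\beta_n'=\tfrac12\ell_{\beta_n}(X_0)$ and $\alpha_{n+1}'=\tfrac12\ell_{\alpha_{n+1}}(X_0)$, and the quantities $a_n,b_n,c_n,d_n$ of the statement are precisely the hyperbolic distances from the four vertices of $H_n$ not lying on the edge $\beta_n'$ to that edge, as computed by the right-angled hexagon formula and the trirectangle formula. Applying Lemma~\ref{Lemma for polygon} to the hexagon $H_n$ with its edge $\beta_n'$ playing the role of $\alpha_1$, I obtain $\rho(x,\beta_n')\leq\max\{a_n,b_n,c_n,d_n\}\leq L$ for every $x\in H_n$. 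Since both hexagons composing $P_n$ carry half of the boundary geodesic $\beta_n\subset\partial X_0$, every point of $P_n$ lies within distance $L$ of $\beta_n$, hence within distance $L$ of $\partial X_0$ (for $P_0$ the same estimate holds relative to $\beta_0$, while $\alpha_0\subset\partial X_0$ only improves this).

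Assembling these bounds over all $n$, every point of $X_0$ lies within the uniform distance $L$ of $\partial X_0$. Recalling that $X_0=C(\Lambda(\Gamma_{X_0}))/\Gamma_{X_0}$ and that $\partial X_0$ is the projection of $\partial C(\Lambda(\Gamma_{X_0}))$, this says exactly that the geometric condition $(\star)$ holds with constant $L$ and the trivial (empty) removable set, which vacuously satisfies Definition~\ref{definition of a removable set}. The two points requiring genuine care are the verification that the displayed formulas for $a_n,b_n,c_n,d_n$ really do compute the vertex-to-$\beta_n'$ distances inside $H_n$, and that Lemma~\ref{Lemma for polygon} applies verbatim to a right-angled hexagon; by contrast, once divergence of $\sum d_i$ is used to confine each ball to finitely many pairs of pants, the completeness argument is routine.
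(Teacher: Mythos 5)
Your proposal is correct, and on the substantive half --- the geometric condition $(\star)$ --- it is essentially the paper's own proof: split each $P_n$ into two isometric right-angled hexagons, identify $a_n,b_n,c_n,d_n$ as the vertex-to-$\frac{1}{2}\beta_n$ distances via the right-angled hexagon and trirectangle formulas, bound $\rho(x,\partial X_0)\le\rho(x,\beta_N)\le L$ for every $x$ by Lemma \ref{Lemma for polygon}, and conclude with a trivial removable set. The only genuine difference is completeness. The paper gets it in one step from Proposition \ref{Basmajian proposition1}: divergence of $\sum d_i$ forces the nested lifts $\widetilde{\alpha_n}$ to converge to a single point of $\partial\mathbb{H}^2$ rather than to a geodesic, so $X_0$ acquires no open infinite geodesic boundary component (the failure mode exhibited in Proposition \ref{Ex1:tight flute surface }) and every boundary component is a closed geodesic, i.e.\ $X_0$ is complete in the paper's sense. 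Your Hopf--Rinow argument (each $P_i$ is compact since every free hole is a geodesic circle; divergence of the partial sums of $d_i$ confines each ball to finitely many $P_i$; hence closed balls are compact) is precisely the argument the paper uses for Example \ref{Ex:my}; it is more self-contained and gives metric completeness directly, while the paper's route is shorter and makes transparent exactly what the divergence hypothesis rules out. One caveat you share with the paper: $c_n$ and $d_n$, as defined by the trirectangle formulas, are the distances from $C_n,D_n$ to the complete geodesic carrying the edge $\frac{1}{2}\beta_n$, and the perpendicular foot may land outside that edge, so they need not equal distances to the edge itself; this is harmless, because that complete geodesic is the axis of the deck transformation corresponding to $\beta_n$ and projects onto the closed geodesic $\beta_n\subset\partial X_0$, so the distance to the line --- a convex function on $\mathbb{H}^2$, hence maximized over the hexagon at its vertices --- already bounds $\rho_{X_0}(x,\partial X_0)$.
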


      \begin{proof}
      Since $\sum d_{i}$ diverges, it follows from Proposition \ref{Basmajian proposition1} that $\{\widetilde{\alpha_{n}}\}_{n=0}^{\infty}$ converges to a point of $\partial \mathbb{H}^{2}$, where $\widetilde{\alpha_{n}}$ is a lift of $\alpha_{n}$ in $\mathbb{H}^{2}$. Hence,
     each geodesic boundary component of $X_{0}$ is a simple closed geodesic and $X_{0}$ is complete.

       \begin{figure}[ht]
      \begin{tikzpicture}[scale=0.49]
\draw (-4.956,0.954) .. controls (-4.956,1.404) and (-3.046,1.404) .. (-3.046,0.954);
\draw[dashed] (-4.956,0.954) .. controls (-4.956,0.504) and (-3.046,0.504) .. (-3.046,0.954);
\draw (-4.956,0.954) .. controls (-4.956,0.052) and (-5.558,-0.55) .. (-6.354,-0.498);
\draw (-6.354,-0.498) .. controls (-6.652,-0.498) and (-6.652,-2.102) .. (-6.354,-2.05);
\draw (-6.354,-0.498) .. controls (-6.006,-0.498) and (-6.006,-2.05) .. (-6.354,-2.05);
\draw (-3.046,0.954) .. controls (-3.046,0.154) and (-2.546,-0.39) .. (-1.748,-0.492);
\draw (-6.354,-2.05) .. controls (-5.614,-2.052) and (-2.646,-2.002) .. (-1.748,-2.002);
\draw (-1.748,-0.492) .. controls (-1.398,-0.39) and (-1.398,-2.002) .. (-1.748,-2.002);
\draw[dashed] (-1.748,-0.492) .. controls (-2.098,-0.44) and (-2.048,-2.002) .. (-1.748,-2.002);
\draw (-1.748,-0.492) .. controls (-0.802,-0.34) and (-0.404,0.16) .. (-0.354,0.96);
\draw (-1.748,-2.002) .. controls (-0.702,-1.952) and (2.102,-1.952) .. (3.046,-1.952);
\draw (-0.354,0.96) .. controls (-0.354,1.462) and (1.596,1.462) .. (1.596,0.96);
\draw[dashed] (-0.354,0.96) .. controls (-0.354,0.458) and (1.596,0.458) .. (1.596,0.96);
\draw (1.596,0.96) .. controls (1.596,0.158) and (2.15,-0.386) .. (3.096,-0.436);
\draw (3.096,-0.436) .. controls (3.398,-0.436) and (3.398,-1.952) .. (3.046,-1.952);
\draw[dashed] (3.096,-0.436) .. controls (2.744,-0.436) and (2.794,-1.952) .. (3.046,-1.952);
\draw (3.096,-0.436) .. controls (3.844,-0.386) and (4.44,0.22) .. (4.49,1.018);
\draw (3.046,-1.952) .. controls (3.944,-1.952) and (7.116,-1.902) .. (7.76,-1.902);
\draw (4.49,1.018) .. controls (4.54,1.418) and (6.426,1.418) .. (6.376,1.068);
\draw[dashed] (4.49,1.018) .. controls (4.49,0.516) and (6.376,0.516) .. (6.376,1.068);
\draw (6.376,1.068) .. controls (6.426,0.17) and (7.116,-0.38) .. (7.76,-0.43);
\draw (7.76,-0.43) .. controls (8.114,-0.43) and (8.114,-1.902) .. (7.76,-1.902);
\draw [dashed](7.76,-0.43) .. controls (7.456,-0.43) and (7.456,-1.902) .. (7.76,-1.902);
\draw (7.76,-0.43) .. controls (8.656,-0.38) and (9.254,0.074) .. (9.304,1.03);
\draw (7.76,-1.902) .. controls (8.758,-1.902) and (11.862,-1.852) .. (13.706,-1.902);
\draw (9.304,1.03) .. controls (9.354,1.482) and (11.268,1.482) .. (11.218,1.03);
\draw[dashed] (9.304,1.03) .. controls (9.304,0.578) and (11.218,0.578) .. (11.218,1.03);
\draw (11.218,1.03) .. controls (11.268,0.024) and (11.86,-0.58) .. (13.706,-0.478);
\draw (12.556,-0.428) .. controls (12.86,-0.428) and (12.86,-1.852) .. (12.556,-1.852);
\draw [dashed](12.556,-0.428) .. controls (12.252,-0.428) and (12.252,-1.852) .. (12.556,-1.852);
\node at (3.03,-2.754) {$\alpha_{2}$};
\node at (-1.696,-2.756) {$\alpha_{1}$};
\node at (7.892,-2.654) {$\alpha_{3}$};
\node at (12.656,-2.552) {$\alpha_{4}$};
\node at (-6.194,-2.778) {$\alpha_{0}$};
\node at (-3.898,2.01) {$\beta_{0}$};
\node at (0.704,2) {$\beta_{1}$};
\node at (5.5,2.01) {$\beta_{2}$};
\node at (10.5,2.01) {$\beta_{3}$};
\node at (-4,-1) {$P_{0}$};
\node at (0.694,-0.898) {$P_{1}$};
\node at (5.5,-0.898) {$P_{2}$};
\node at (10.306,-0.898) {$P_{3}$};
\end{tikzpicture}
\caption{\small{A flute surface with geodesic boundary components $\alpha_{0}, \beta_{n}$ for $n\geq0$ and the series $\sum d_{i}$ divergent (this figure is a special case that the lengths of all $\alpha_{n}$ are equal).}}
            \label{fig:EX1}
		\end{figure}
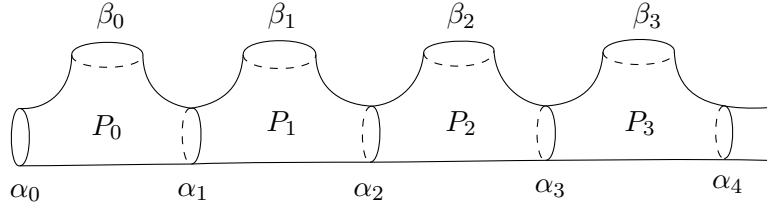

     Note that $P_{n}$ can be constructed by pasting two copies of the right-angled geodesic hexagon $H_{n}$ with pairwise non-adjacent edges $\frac{1}{2}\beta_{n}$, $\frac{1}{2}\alpha_{n}$, $\frac{1}{2}\alpha_{n+1}$ along the remaining three edges. Denote the vertices of $H_{n}$ not on the edge $\frac{1}{2}\beta_{n}$ by $A_{n}$, $C_{n}$, $D_{n}$, $B_{n}$ respectively in the anticlockwise order, as indicated in Figure \ref{fig:EX3}.

  \begin{figure}[ht]
\begin{tikzpicture}[scale=1]
\draw (-4.8771,0.7269) .. controls (-4.2317,0.6066) and (-3.6997,0.6066) .. (-3.0832,0.7269);
\draw (-4.8771,0.7269) .. controls (-5.0625,-0.05) and (-5.0993,-0.6565) .. (-6.1083,-0.9753);
\draw (-6.1083,-0.9753) .. controls (-5.9643,-1.3416) and (-5.8623,-1.7254) .. (-5.8626,-2.1834);
\draw (-3.0832,0.7269) .. controls (-2.8792,-0.05) and (-2.7083,-0.6749) .. (-1.7666,-0.9417);
\draw (-5.8626,-2.1834) .. controls (-4.2917,-2.0251) and (-3.6417,-2.0251) .. (-2.0417,-2.1751);
\draw (-1.7666,-0.9417) .. controls (-1.906,-1.2894) and (-2.0477,-1.7191) .. (-2.0417,-2.1751);
\node at (-3.9,0.9245) {$\frac{1}{2}\beta_{n}$};
\node at (-6.3626,-1.5251) {$\frac{1}{2}\alpha_{n}$};
\node at (-1.3,-1.4917) {$\frac{1}{2}\alpha_{n+1}$};
\node at (-3.9583,-0.8749) {$H_{n}$};
\node at (-6.3749,-0.8753) {$A_{n}$};
\node at (-5.8749,-2.4583) {$C_{n}$};
\node at (-2,-2.4583) {$D_{n}$};
\node at (-1.5,-0.8753) {$B_{n}$};
\end{tikzpicture}
\caption{\small{A right-angled hexagon $H_{n}$.}}
            \label{fig:EX3}
		\end{figure}
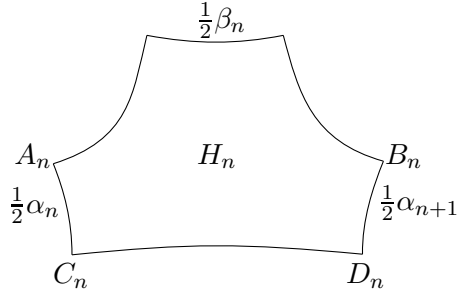

     By Lemma \ref{Lemma for polygon}, for any $y\in H_{n}$,
     \begin{equation*}
     \rho(y,\frac{1}{2}\beta_{n}) =\max{\{ \rho(A_{n},\frac{1}{2}\beta_{n}), \rho(B_{n},\frac{1}{2}\beta_{n}), \rho(C_{n},\frac{1}{2}\beta_{n}), \rho(D_{n},\frac{1}{2}\beta_{n}\}}.
     \end{equation*}

     For simplicity, denote
     $\frac{1}{2}\ell_{\alpha_{n}}(X_{0})=\alpha_{n}'$, $\frac{1}{2}\ell_{\beta_{n}}(X_{0})=\beta_{n}'$,
      $\rho(A_{n},\frac{1}{2}\beta_{n})=a_{n}$,
     $\rho(B_{n},\frac{1}{2}\beta_{n})=b_{n}$, $\rho(C_{n},\frac{1}{2}\beta_{n})=c_{n}$,
     $\rho(D_{n},\frac{1}{2}\beta_{n})=d_{n}$.

    Then we have
    \begin{equation*}
       \begin{split}
     \cosh{a_{n}}&=\frac{\cosh{\alpha_{n+1}'}+\cosh{\alpha_{n}'}\cosh{\beta_{n}'}}
       {\sinh{\alpha_{n}'}\sinh{\beta_{n}'}},\\
       \cosh{b_{n}}&=\frac{\cosh{\alpha_{n}'}+\cosh{\alpha_{n+1}'}\cosh{\beta_{n}'}}
       {\sinh{\alpha_{n+1}'}\sinh{\beta_{n}'}},\\
       \sinh{c_{n}}&=\sinh{a_{n}}\cosh{\alpha_{n}'},\\
        \sinh{d_{n}}&=\sinh{b_{n}}\cosh{\alpha_{n+1}'}.
          \end{split}
       \end{equation*}

       For any point $x\in X_{0}$, there exists an integer $N\geq0$ such that $x\in P_{N}$. In particular, $x\in H_{N}$. By assumption, we obtain that
       \begin{equation*}
        \rho(x,\beta_{N})\leq\max\{a_{N}, b_{N}, c_{N}, d_{N}\}\leq L.
       \end{equation*}
       Therefore, $ \rho(x,\partial X_{0})\leq  \rho(x,\beta_{N})\leq L$, which implies that $X_{0}$ satisfies the geometric condition $(\star)$.
       \end{proof}

       \begin{example}\label{EX2}
        Let $X_{0}'$ be a flute surface. Let $\alpha_{n}'$, $\beta_{n}'$ denote the lengths of the corresponding simple geodesic segments in Proposition \ref{EX1}. Suppose that $\alpha_{n}'$, $\beta_{n}'$ satisfy the following conditions:

    (1) $\alpha_{n}'=l_{0}$ for all $n\geq0$;

    (2) $\{\beta_{n}'\}_{n=0}^{\infty}$ is a strictly increasing sequence of positive numbers such that
    \begin{center}
    $\beta_{n}'\rightarrow\infty$ as $n\rightarrow \infty$.
    \end{center}

    We claim that $X_{0}'$ is complete and satisfies the geometric condition $(\star)$.
     Indeed, using the same notations $a_{n}, b_{n}, c_{n}, d_{n}$ as in Proposition \ref{EX1}, by direct computation, we obtain that
       \begin{equation*}
       \begin{split}
       \cosh{a_{n}}= \cosh{b_{n}}&=\frac{\cosh{l_{0}}+\cosh{\beta_{n}'}\cosh{l_{0}}}
       {\sinh{\beta_{n}'}\sinh{l_{0}}}\\
      & = \csch{\beta_{n}'}\coth{l_{0}}+\coth{\beta_{n}'}\coth{l_{0}}\\
       &\leq 2\coth{\beta_{n}'}\coth{l_{0}},
       \end{split}
       \end{equation*}

      \begin{equation*}
      \sinh{c_{n}}=\sinh{d_{n}}=\sinh{a_{n}}\cosh{l_{0}}
      \leq \cosh{a_{n}}\cosh{l_{0}}.
      \end{equation*}

     Note that the sequence $\{\coth{\beta_{n}'}\}$ strictly decreases, we have that
     \begin{equation*}
     \sup\limits_{n\in \mathbb{N}}\{a_{n}, b_{n}, c_{n}, d_{n}\} \leq L,
      \end{equation*}
      for a constant $L>0$. Moreover, it is easy to see that $\sum d_{i}=\infty$. By Proposition \ref{EX1}, $X_{0}'$ is complete and satisfies the geometric condition $(\star)$, as indicated in Figure \ref{fig:X_{0}'}.

  \begin{figure}[ht]
\begin{tikzpicture}[scale=0.49]
\draw (-4.854,1.056) .. controls (-4.854,1.506) and (-3.25,1.506) .. (-3.25,1.056);
\draw[dashed] (-4.854,1.056) .. controls (-4.854,0.606) and (-3.25,0.606) .. (-3.25,1.056);
\draw (-4.854,1.056) .. controls (-4.854,0.052) and (-5.252,-0.55) .. (-6.252,-0.6);
\draw (-6.252,-0.6) .. controls (-6.55,-0.6) and (-6.652,-1.898) .. (-6.252,-1.948);
\draw (-6.252,-0.6) .. controls (-5.904,-0.6) and (-5.904,-1.948) .. (-6.252,-1.948);
\draw (-3.25,1.056) .. controls (-3.25,0.052) and (-2.75,-0.594) .. (-1.85,-0.594);
\draw (-6.252,-1.948) .. controls (-5.002,-1.95) and (-2.85,-1.9) .. (-1.85,-1.9);
\draw (-1.85,-0.594) .. controls (-1.5,-0.594) and (-1.5,-1.9) .. (-1.85,-1.9);
\draw[dashed] (-1.85,-0.594) .. controls (-2.2,-0.644) and (-2.15,-1.9) .. (-1.85,-1.9);
\draw (-1.85,-0.594) .. controls (-0.904,-0.544) and (-0.302,-0.146) .. (-0.252,0.756);
\draw (-1.85,-1.9) .. controls (-0.906,-1.85) and (2.51,-1.85) .. (3.454,-1.85);
\draw (-0.252,0.756) .. controls (-0.252,1.156) and (2.004,1.156) .. (2.004,0.756);
\draw[dashed] (-0.252,0.756) .. controls (-0.252,0.356) and (2.004,0.356) .. (2.004,0.756);
\draw (2.004,0.756) .. controls (2.004,-0.046) and (2.558,-0.59) .. (3.504,-0.64);
\draw (3.504,-0.64) .. controls (3.806,-0.64) and (3.806,-1.85) .. (3.454,-1.85);
\draw[dashed] (3.504,-0.64) .. controls (3.152,-0.64) and (3.202,-1.85) .. (3.454,-1.85);
\draw (3.504,-0.64) .. controls (4.15,-0.59) and (4.542,-0.392) .. (4.694,0.304);
\draw (3.454,-1.85) .. controls (4.25,-1.85) and (8.442,-1.8) .. (9.086,-1.8);
\draw (4.694,0.304) .. controls (4.744,0.704) and (7.854,0.704) .. (7.804,0.354);
\draw[dashed] (4.694,0.304) .. controls (4.694,-0.096) and (7.804,-0.096) .. (7.804,0.354);
\draw (7.804,0.354) .. controls (8.058,-0.442) and (8.442,-0.584) .. (9.086,-0.634);
\draw (9.086,-0.634) .. controls (9.44,-0.634) and (9.44,-1.8) .. (9.086,-1.8);
\draw [dashed](9.086,-0.634) .. controls (8.68,-0.634) and (8.782,-1.8) .. (9.086,-1.8);
\draw (9.086,-0.634) .. controls (9.778,-0.584) and (10.07,-0.436) .. (10.324,-0.092);
\draw (9.086,-1.8) .. controls (9.982,-1.8) and (15.228,-1.75) .. (17.072,-1.8);
\draw (10.324,-0.092) .. controls (10.374,0.258) and (14.736,0.258) .. (14.686,-0.092);
\draw[dashed] (10.324,-0.092) .. controls (10.324,-0.34) and (14.686,-0.34) .. (14.686,-0.092);
\draw (14.686,-0.092) .. controls (15.042,-0.486) and (15.226,-0.682) .. (17.072,-0.682);
\draw (15.922,-0.632) .. controls (16.226,-0.632) and (16.226,-1.75) .. (15.922,-1.75);
\draw [dashed](15.922,-0.632) .. controls (15.516,-0.632) and (15.516,-1.75) .. (15.922,-1.75);
\node at (-3.902,1.956) {$\beta_{0}$};
\node at (1.104,1.756) {$\beta_{1}$};
\node at (6.398,1.254) {$\beta_{2}$};
\node at (12.634,0.708) {$\beta_{3}$};
\node at (-6.252,-2.55) {$\alpha_{0}$};
\node at (-1.798,-2.45) {$\alpha_{1}$};
\node at (3.506,-2.45) {$\alpha_{2}$};
\node at (9.188,-2.45) {$\alpha_{3}$};
\node at (15.972,-2.4) {$\alpha_{4}$};
\node at (-4,-1) {$P_{0}$};
\node at (1.01,-1) {$P_{1}$};
\node at (6.306,-1) {$P_{2}$};
\node at (12.684,-0.99) {$P_{3}$};
\end{tikzpicture}
\caption{\small{The flute surface $X_{0}'$ in Example \ref{EX2}.}}
            \label{fig:X_{0}'}
		\end{figure}
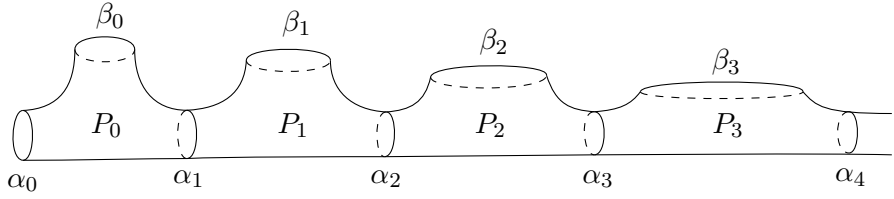
       \end{example}

\begin{example}\label{ex: horodisks}
We construct a hyperbolic surface $X_1$ of infinite type, which satisfies the geometric condition $(\star)$ with a removable set $A$ for $\Gamma_{X_1}$ consisting of horodisks.

Let $Z_{0}$ be a flute surface with $\alpha_{n}=\beta_{n}=1$ for $n\geq0$, where $\alpha_{n}$ and $\beta_{n}$ denote the same simple closed geodesics as in Proposition \ref{EX1}. Then we construct $X_{1}$ by inserting a generalized hyperbolic pair of pants with one cusp and two boundary components $\alpha^{l}_{n}, \alpha^{r}_{n}$ of lengths 1 along both sides of $\alpha_{n}$ for $n\geq1$, as shown in Figure \ref{fig: removable set is cusp}. Let $A$ be the removable set for $\Gamma_{X_1}$, whose projection $\pi(A)$ on $X_1$ under $\Gamma_{X_1}$ is a disjoint union of open cusps with the boundary $\gamma'_n$ of length 1. Note that the geodesics $\alpha^{l}_{n}$, $\alpha^{r}_{n}$, $\beta_{n}$ and $\gamma'_n$ have the same length 1 for all $n\geq1$. It is not hard to see that $X_1\setminus \pi(A)$ is contained in a bounded distance of $\partial X_1$. This implies that $X_1$ satisfies the geometric condition $(\star)$.

\begin{figure}[ht]
\begin{tikzpicture}[scale=0.68]
\draw (-4,0) -- (11.5,0);
\draw (-4,1) .. controls (-3,1) and (-3,1.5) .. (-3,2.3);
\draw (-2,2.3) .. controls (-2,1.5) and (-2,1) .. (-1,1);
\draw (1,1) .. controls (2,1) and (2,1.5) .. (2,2.3);
\draw (3,2.3) .. controls (3,1.5) and (3,1) .. (4,1);
\draw (1,1) .. controls (2,1) and (2,1.5) .. (2,2.3);
\draw (3,2.3) .. controls (3,1.5) and (3,1) .. (4,1);
\draw (6,1) .. controls (7,1) and (7,1.5) .. (7,2.3);
\draw (8,2.3) .. controls (8,1.5) and (8,1) .. (9,1);
\draw (-1,1) .. controls (0,1) and (0,5.3) .. (0,5.3);
\draw (0,5.3) .. controls (0,5.3) and (0,1) .. (1,1);
\draw (4,1) .. controls (5,1) and (5,5.3) .. (5,5.3);
\draw (5,5.3) .. controls (5,5.3) and (5,1) .. (6,1);
\draw (9,1) .. controls (10,1) and (10,5.3) .. (10,5.3);
\draw (10,5.3) .. controls (10,5.3) and (10,1) .. (11,1) node (v1) {};
\draw (-4,1) .. controls (-4.3,1) and (-4.3,0) .. (-4,0);
\draw  (-4,1) node (v2) {} .. controls (-3.7,1) and (-3.7,0) .. (-4,0) node (v3) {};
\draw [dashed] (-1,1) .. controls (-1.3,1) and (-1.3,0) .. (-1,0);
\draw  (-1,1) .. controls (-0.7,1) and (-0.7,0) .. (-1,0);
\draw [dashed]  (1,1) .. controls (0.7,1) and (0.7,0) .. (1,0);
\draw  (1,1) .. controls (1.3,1) and (1.3,0) .. (1,0);
\draw [dashed] (4,1) .. controls (3.7,1) and (3.7,0) .. (4,0);
\draw  (4,1) .. controls (4.3,1) and (4.3,0) .. (4,0);
\draw [dashed] (6,1) .. controls (5.7,1) and (5.7,0) .. (6,0);
\draw  (6,1) .. controls (6.3,1) and (6.3,0) .. (6,0);
\draw [dashed] (9,1) .. controls (8.7,1) and (8.7,0) .. (9,0);
\draw  (9,1) .. controls (9.3,1) and (9.3,0) .. (9,0);
\draw [dashed] (11,1) .. controls (10.7,1) and (10.7,0) .. (11,0);
\draw  (11,1) node (v4) {} .. controls (11.3,1) and (11.3,0) .. (11,0);
\draw (-3,2.3) .. controls (-3,2.6) and (-2,2.6) .. (-2,2.3);
\draw (-3,2.3) .. controls (-3,2) and (-2,2) .. (-2,2.3);
\draw (2,2.3) .. controls (2,2.6) and (3,2.6) .. (3,2.3);
\draw (2,2.3) .. controls (2,2) and (3,2) .. (3,2.3);
\draw (7,2.3) .. controls (7,2.6) and (8,2.6) .. (8,2.3);
\draw(7,2.3) .. controls (7,2) and (8,2) .. (8,2.3);
\draw (11,1) .. controls (11,1) and (11,1) .. (11.5,1);
\node at (-4,-0.5) {$\alpha_0$};
\node at (-1,-0.5) {$\alpha_1^l$};
\node at (1,-0.5) {$\alpha_1^r$};
\node at (4,-0.5) {$\alpha_2^l$};
\node at (6,-0.5) {$\alpha_2^r$};
\node at (9,-0.5) {$\alpha_3^l$};
\node at (11,-0.5) {$\alpha_3^r$};
\node at (-2.5,2.9) {$\beta_0$};
\node at (2.5,2.9) {$\beta_1$};
\node at (7.5,2.9) {$\beta_2$};
\draw [dashed](-0.5,1.5) .. controls (-0.5,1.8) and (0.5,1.8) .. (0.5,1.5);
\draw (-0.5,1.5) .. controls (-0.5,1.2) and (0.5,1.2) .. (0.5,1.5);
\draw [dashed](4.5,1.5) .. controls (4.5,1.8) and (5.5,1.8) .. (5.5,1.5);
\draw (4.5,1.5) .. controls (4.5,1.2) and (5.5,1.2) .. (5.5,1.5);
\draw [dashed](9.5,1.5) .. controls (9.5,1.8) and (10.5,1.8) .. (10.5,1.5);
\draw (9.5,1.5) .. controls (9.5,1.2) and (10.5,1.2) .. (10.5,1.5);
\node at (0,0.7761) {$\gamma'_1$};
\node at (5,0.7761) {$\gamma'_2$};
\node at (10,0.7761) {$\gamma'_3$};
\end{tikzpicture}
\caption{\small{The flute surface $X_1$ in Example \ref{ex: horodisks} for the case that the removable set for $\Gamma_{X_1}$ is a disjoint union of horodisks.}}
            \label{fig: removable set is cusp}
\end{figure}
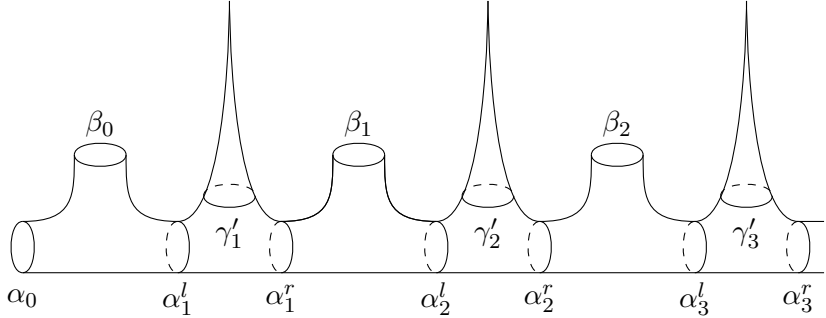
       \end{example}

\begin{example}\label{ex: neighbourhoods of a geodesic}
We construct a hyperbolic surface $X_2$ of infinite type, which satisfies the geometric condition $(\star)$ with a removable set $A$ for $\Gamma_{X_2}$ consisting of neighbourhoods of complete geodesics in $\mathbb{H}^2$ whose radii tend to infinity.

Let $Z_{0}$ be a flute surface with $\alpha_{n}=\beta_{n}=1$ for $n\geq0$, where $\alpha_{n}$ and $\beta_{n}$ denote the same simple closed geodesics as in Proposition \ref{EX1}. Then we construct $X_{2}$ by inserting a hyperbolic pair of pants with two boundary components $\alpha^{l}_{n}, \alpha^{r}_{n}$ of lengths 1 and the other boundary component $\gamma_n$ of length $\frac{1}{2n}$ along both sides of $\alpha_{n}$ for $n\geq1$, as indicated in Figure \ref{fig: removable set is annulus}. Let $A$ be a removable set for $\Gamma_{X_2}$, whose projection $\pi(A)$ on $X_2$ under $\Gamma_{X_2}$ is a disjoint union of relatively open annuli with two boundary components $\gamma_n$ and $\gamma'_n$, where $\gamma'_n$ is an equidistant curve of the geodesic $\gamma_n$ for a distance $r_n=\arcsinh \{1/\sinh{(\frac{1}{2}\ell_{\gamma_n}(X_2))}\}=\arcsinh \{1/\sinh{(\frac{1}{4n})}\}$ (this is ensured by the collar lemma, see \cite{Buser}). Note that $\gamma'_{n}$ is not a geodesic and the relation between $\ell_{\gamma'_{n}}(X_2)$ and $\ell_{\gamma_{n}}(X_2)$ is given by the following formula (see \cite[Example 1.3.2]{Buser}):
\begin{equation*}
\ell_{\gamma'_{n}}(X_2) = \ell_{\gamma_{n}}(X_2)\cosh d_{X_2}(\gamma_n,\gamma'_n),
\end{equation*}
where $d_{X_2}(\gamma_n,\gamma'_n)$ is the distance between $\gamma_n$ and $\gamma'_n$ on $X_2$. By computation,
\begin{equation*}
\ell_{\gamma'_n}(X_2)=\frac{1}{2n}\cosh {r_n}
=\sqrt{\frac{1}{4{n}^2}+\frac{4}{(4n\sinh(\frac{1}{4n}))^2}}\rightarrow 2,
\end{equation*}
as $n\rightarrow \infty$. Hence, there exists $n_0\in\mathbb{N}$ and $\epsilon_0>0$ such that
\begin{equation*}
2-\epsilon_0<\ell_{X_2}(\gamma'_n)\leq 2+\epsilon_0,
\end{equation*}
for all $n\geq n_0$. Combined with the fact that the geodesics $\alpha^{l}_{n}$, $\alpha^{r}_{n}$ and $\beta_{n}$ have the same length 1 for all $n\geq1$, it follows that $X_2\setminus \pi(A)$ is contained in a bounded distance of $\partial X_2$. This implies that $X_2$ satisfies the geometric condition $(\star)$. We obtain the desired surface $X_2$.
\begin{figure}[ht]
\begin{tikzpicture}[scale=0.6]
\draw (-5.2,0) -- (11.8,0);
\draw (-5.2,1) .. controls (-4.2,1) and (-4.2,1.4) .. (-4.2,2.1);
\draw (-3.2,2.1) .. controls (-3.2,1.4) and (-3.2,1) .. (-2.2,1);
\draw (0.5,1) .. controls (1.5,1) and (1.5,1.4) .. (1.5,2.1);
\draw (2.5,2.1) .. controls (2.5,1.4) and (2.5,1) .. (3.5,1);
\draw (0.5,1) .. controls (1.5,1) and (1.5,1.4) .. (1.5,2.1);
\draw (2.5,2.1) .. controls (2.5,1.4) and (2.5,1) .. (3.5,1);
\draw (6,1) .. controls (7,1) and (7,1.4) .. (7,2.1);
\draw (8,2.1) .. controls (8,1.4) and (8,1) .. (9,1);
\draw (-2.2,1) .. controls (-1.1,1) and (-1.1,2.9) .. (-1.1,3.1);
\draw (-0.4,3.1) .. controls (-0.4,2.9) and (-0.5,1) .. (0.5,1);
\draw (3.5,1) .. controls (4.5,1) and (4.5,4.8) .. (4.5,4.8);
\draw (4.8,4.8) .. controls (4.8,4.8) and (4.9,1) .. (6,1);
\draw (9,1) .. controls (10,1) and (10,7.6) .. (10,7.6);
\draw (10.2,7.6) .. controls (10.2,7.6) and (10.4,1) .. (11.3,1) node (v1) {};
\draw (-5.2,1) .. controls (-5.5,1) and (-5.5,0) .. (-5.2,0);
\draw  (-5.2,1) node (v2) {} .. controls (-4.9,1) and (-4.9,0) .. (-5.2,0) node (v3) {};
\draw [dashed] (-2.2,1) .. controls (-2.5,1) and (-2.5,0) .. (-2.2,0);
\draw  (-2.2,1) .. controls (-1.9,1) and (-1.9,0) .. (-2.2,0);
\draw [dashed]  (0.5,1) .. controls (0.2,1) and (0.2,0) .. (0.5,0);
\draw  (0.5,1) .. controls (0.8,1) and (0.8,0) .. (0.5,0);
\draw [dashed] (3.5,1) .. controls (3.2,1) and (3.2,0) .. (3.5,0);
\draw  (3.5,1) .. controls (3.8,1) and (3.8,0) .. (3.5,0);
\draw [dashed] (6,1) .. controls (5.7,1) and (5.7,0) .. (6,0);
\draw  (6,1) .. controls (6.3,1) and (6.3,0) .. (6,0);
\draw [dashed] (9,1) .. controls (8.7,1) and (8.7,0) .. (9,0);
\draw  (9,1) .. controls (9.3,1) and (9.3,0) .. (9,0);
\draw [dashed] (11.3,1) .. controls (11,1) and (11,0) .. (11.3,0);
\draw  (11.3,1) node (v4) {} .. controls (11.6,1) and (11.6,0) .. (11.3,0);
\draw (-4.2,2.1) .. controls (-4.2,2.4) and (-3.2,2.4) .. (-3.2,2.1);
\draw (-4.2,2.1) .. controls (-4.2,1.8) and (-3.2,1.8) .. (-3.2,2.1);
\draw (1.5,2.1) .. controls (1.5,2.4) and (2.5,2.4) .. (2.5,2.1);
\draw (1.5,2.1) .. controls (1.5,1.8) and (2.5,1.8) .. (2.5,2.1);
\draw (7,2.1) .. controls (7,2.4) and (8,2.4) .. (8,2.1);
\draw(7,2.1) .. controls (7,1.8) and (8,1.8) .. (8,2.1);
\draw (11.3,1) .. controls (11.3,1) and (11.3,1) .. (11.8,1);
\node at (-4.5,-0.5) {$\alpha_0$};
\node at (-1.5,-0.5) {$\alpha_1^l$};
\node at (0.5,-0.5) {$\alpha_1^r$};
\node at (3.5,-0.5) {$\alpha_2^l$};
\node at (6,-0.5) {$\alpha_2^r$};
\node at (9,-0.5) {$\alpha_3^l$};
\node at (11.3,-0.5) {$\alpha_3^r$};
\node at (-3.7,2.7) {$\beta_0$};
\node at (2,2.7) {$\beta_1$};
\node at (7.5,2.7) {$\beta_2$};
\draw (-1.1,3.1) .. controls (-1.1,3.3) and (-0.4,3.3) .. (-0.4,3.1);
\draw (-1.1,3.1) .. controls (-1.1,2.9) and (-0.4,2.9) .. (-0.4,3.1);
\draw (4.5,4.9) .. controls (4.5,5) and (4.8,5) .. (4.8,4.9);
\draw (4.5,4.9) .. controls (4.5,4.7) and (4.8,4.7) .. (4.8,4.9);
\draw (10,7.6) .. controls (10,7.7) and (10.2,7.7) .. (10.2,7.6);
\draw (10,7.6) .. controls (10,7.5) and (10.2,7.5) .. (10.2,7.6);
\node at (-0.7,3.6) {$\gamma_1$};
\node at (4.7,5.3) {$\gamma_2$};
\node at (10.1,8.1) {$\gamma_3$};
\draw[dashed] (-1.25,1.9) .. controls (-1.25,2.1) and (-0.3,2.1) .. (-0.3,1.9);
\draw (-1.25,1.9) .. controls (-1.25,1.7) and (-0.3,1.7) .. (-0.3,1.9);
\draw[dashed] (4.15,1.8) .. controls (4.15,2) and (5.28,2) .. (5.28,1.8);
\draw (4.15,1.8) .. controls (4.15,1.6) and (5.287,1.6) .. (5.28,1.8);
\draw[dashed] (9.45,1.6) .. controls (9.45,1.8) and (10.85,1.8) .. (10.85,1.6);
\draw (9.45,1.6) .. controls (9.45,1.4) and (10.85,1.4) .. (10.85,1.6);
\node at (-0.8,1.3) {$\gamma'_1$};
\node at (4.7,1.2) {$\gamma'_2$};
\node at (10.2,1) {$\gamma'_3$};
\end{tikzpicture}
\caption{\small{The flute surface $X_2$ in Example \ref{ex: neighbourhoods of a geodesic} for the case that the removable set for $\Gamma_{X_2}$ is a disjoint union of neighbourhoods of complete geodesics in $\mathbb{H}^2$.}}
            \label{fig: removable set is annulus}
\end{figure}
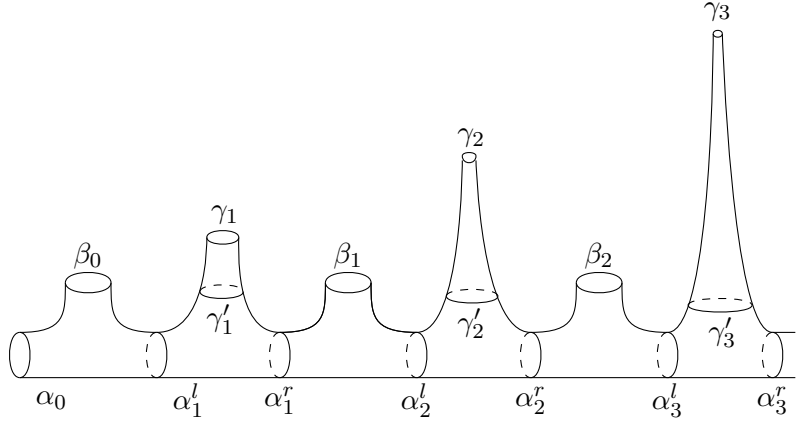
       \end{example}

\begin{example}\label{ex: hyperbolic disks}
    We construct a hyperbolic surface $X_3$ of infinite type, which satisfies the geometric condition $(\star)$ with a removable set $A$ for $\Gamma_{X_3}$ consisting of hyperbolic disks whose radii tend to infinity.
	
    For each integer $n \geq3$, let $T_n$ be a trirectangle with one angle $\theta_n=\pi/n$ and three right angles. Denote the four consecutive edges of $T_n$ by $\alpha_n$, $b_n$, $a_n$ and $\beta_n$. Let $\theta_n$ be the angle bounded by $\alpha_n$ and $\beta_n$ (see Figure \ref{fig:ex_YL}). For convenience, we also denote the lengths of $\alpha_n$, $b_n$, $a_n$, $\beta_n$ by $\alpha_n$, $b_n$, $a_n$, $\beta_n$, respectively.
    Note that $T_n$ can be uniquely (up to isometries) determined by $\theta_n$ and $\alpha_n$. We choose $\alpha_n$ such that $\sin{\theta_n}\cosh{\alpha_n}=2$ for all $n\geq 3$. By the formula $\cosh{a_n}=\cosh{\alpha_n}\sin{\theta_n}$ for a trirectangle $T_n$ (see \cite{Buser}), $\cosh{a_n}=2$ for all $n\geq 3$.
	
	We claim that in each $T_n$ we have
	\begin{equation}
		\label{eq:ex_YL_1}
		\beta_n < \alpha_n.
	\end{equation}
	Indeed, by the formulae for a trirectangle $T_n$ (see \cite{Buser}), we get
     \begin{equation}\label{eq:estimate for the angle}
       \begin{split}
        \cos \theta_n&=\sinh a_n \sinh b_n,\\
        \frac{\cosh a_n}{\cosh b_n} &= \frac{\cosh \alpha_n}{\cosh \beta_n}.
        \end{split}
        \end{equation}
   Hence,
	\begin{equation*}
		\sinh b_n
		= \frac{\cos \theta_n}{\sinh a_n}
        = \frac{\cos \theta_n}{\sqrt{\cosh^2 a_n - 1}}
		= \frac{\cos \theta_n}{\sqrt{3}}
		< 1<\sqrt{3}
		= \sinh a_n.
	\end{equation*}
	Combined with \eqref{eq:estimate for the angle}, we have $\beta_n < \alpha_n$ for all $n\geq 3$.
	
	\begin{figure}[ht]
		\begin{tikzpicture}[scale=0.38]
			\coordinate (v0) at (0,0) {};
			\coordinate (v1) at (8.18,0) {};
			\coordinate (v2) at (0,8.18) {};
			\coordinate (v3) at (-8.18,0) {};
			\coordinate (v4) at (0,-8.18) {};
			
			\coordinate (v1') at (5.58,5.58) {};
			\coordinate (v2') at (-5.58,5.58) {};
			\coordinate (v3') at (-5.58,-5.58) {};
			\coordinate (v4') at (5.58,-5.58) {};
			
			\coordinate (v1'') at (9,3) {};
			\coordinate (v2'') at (-3,9) {};
			\coordinate (v3'') at (-9,-3) {};
			\coordinate (v4'') at (3,-9) {};
			
			\draw  (v0) -- (v1) (v0) -- (v2) (v0) -- (v3) (v0) -- (v4);
			\draw  (v0) -- (v1') (v0) -- (v2') (v0) -- (v3') (v0) -- (v4');
			
			\draw[dashed] (v1') arc (150:210:11.2);
			\draw[dashed] (v2') arc (-120:-60:11.2);
			\draw[dashed] (v3') arc (-30:30:11.2);
			\draw[dashed] (v4') arc (60:120:11.2);
			
			\draw (v1'') arc (150:210:6);
			\draw (v2'') arc (-120:-60:6);
			\draw (v3'') arc (-30:30:6);
			\draw (v4'') arc (60:120:6);
			
			\draw (v1'') arc (-120:-150:16.4);
			\draw (v2'') arc (-30:-60:16.4);
			\draw (v3'') arc (60:30:16.4);
			\draw (v4'') arc (150:120:16.4);
			
			\draw  (v0) ellipse (7.92 and 7.92);
			
			\draw (1,0) arc (0:45:1);
			\node() at (8,5) {$a_n$};
			\node () at (9,1.5) {$b_n$};
			\node () at (6,-.5) {$\alpha_n$};
			\node() at (2,3) {$\beta_n$};
			\node () at (5,2) {$L_n$};
			\node () at (1.5,0.5) {$\theta_n$};
		\end{tikzpicture}
		\caption{\small{The trirectangle $T_n$ in the right-angled $2n$-polygon $P_n$ for $n=4$.}}
		\label{fig:ex_YL}
	\end{figure}
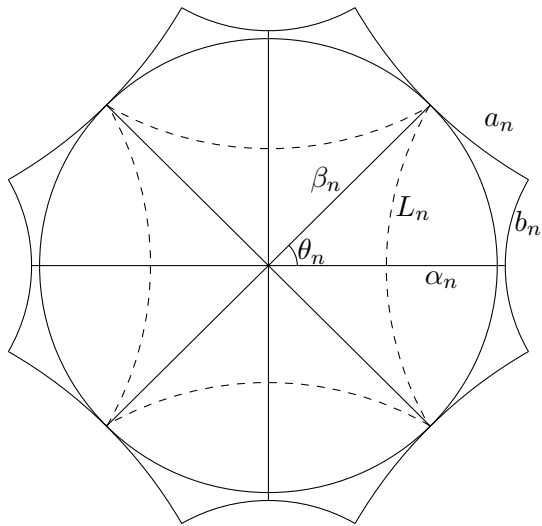
	
	Denote by $L_n$ the length of the geodesic perpendicular to $\alpha_n$ through the intersection point of $\beta_n$ and $a_n$, as shown in Figure \ref{fig:ex_YL}. By \eqref{eq:ex_YL_1} and a formula for a right-angled triangle (see \cite{Buser}), we obtain that
	\begin{equation*}
		\sinh L_n
		= \sin \theta_n \sinh \beta_n
		< \sin \theta_n \sinh \alpha_n
		< \sin \theta_n \cosh \alpha_n
		= \cosh a_n
		= 2,
	\end{equation*}
	for all $n\geq3$. Therefore,
	\begin{equation}\label{eq:ex_YL_2}
		L_n < \arcsinh 2,
	\end{equation}
	for all $n\geq3$.

      Now we construct a right-angled $2 n$-polygon $P_n$ by gluing $2 n$ copies of $T_n$ along the edges of the common lengths $\alpha_n$ and $\beta_n$ alternately (see Figure \ref{fig:ex_YL}). Then $P_n$ has $n$ sides of lengths $2 a_n$ and the other $n$ sides of lengths $2 b_n$. Denote the $2 b_n$-length sides of $P_n$ by $e_1$, $e_2$, ..., $e_{n}$ in the anti-clockwise order. Let $B_n$ be the maximal embedding open hyperbolic disk in $P_n$ whose center is the center of $P_n$.
     	
	By the inequality \eqref{eq:ex_YL_1} and the construction of $P_n$, the radius of $B_n$ is $\beta_n$. Note that $\theta_n = \pi / n$, $\cosh a_n = \cosh \alpha_n \sin \theta_n \equiv 2$, it follows that $\alpha_n\rightarrow \infty$ as $n\rightarrow \infty$.
	Combined with the formula for a trirectangle $T_n$ (see \cite{Buser}) that $\cosh \beta_n \sinh a_n = \sinh \alpha_n$, the radius $\beta_n$ of $B_n$ tends to infinity as $n\rightarrow\infty$.

	Take another copy $P_n'$ of $P_n$ and denote by $e_1'$, $e_2'$, ..., $e_{n}'$ the $2 b_n$-length sides corresponding to the sides $e_1$, $e_2$, ..., $e_{n}$ of $P_n$. Let $B_n'$ be the maximal embedded open hyperbolic disk in $P_n'$. Denote by $S_n$ the surface obtained by gluing $P_n$ and  $P_n'$ along $e_i$ and $e_i'$ for $i = 1, 2, ..., n$.
	Then $S_n$ is a hyperbolic surface with $n$ consecutive boundary components $\gamma^{(n)}_1$, $\gamma^{(n)}_2$, ..., $\gamma^{(n)}_{n}$ of the same length $4 a_n = 4 \arccosh 2>0$ for all $n\geq3$.
	The hyperbolic disks $B_n$ and $B_n'$ are disjoint from each other and tangent to $\gamma^{(n)}_i$ for $i = 1, 2, ..., n$. Moreover, they have the same radius $\beta_n$, which tends to infinity as $n\rightarrow\infty$.

	It is not hard to see that $S_n \setminus ( B_n \cup B_n' )$ is within the distance $L_n$ of the boundary $\partial S_n$ of $S_n$. By \eqref{eq:ex_YL_2}, we have
	\begin{equation}
		\label{eq:ex_YL_3}
		d (p, \partial S_n) < \arcsinh 2,
	\end{equation}
	for all $p\in S_n \setminus ( B_n \cup B_n' )$ and all $n\geq 3$.
	
	We construct $X_3$ by pasting the boundary component $\gamma^{(n)}_1$ of $S_{n}$ and the boundary component $\gamma^{(n+1)}_{n}$ of $S_{n+1}$ one by one for $n\geq 3$ (see Figure \ref{fig:removable set for hyperbolic disks}). Let $A$ be a removable set for $\Gamma_{X_3}$ whose projection $\pi(A)$ on $X_3$ under $\Gamma_{X_3}$ is a disjoint union of hyperbolic disks $B_n$ and $B'_n$ over $n\geq 3$.
By inequality \eqref{eq:ex_YL_3} and the construction of $X_3$, it is not hard to see that $X_3 \setminus \pi(A)$ is within the distance $L=2 \arcsinh 2$ of $\partial X_3$. This implies that $X_3$ satisfies the geometric condition $(\star)$ and we obtain the desired surface $X_3$. 	

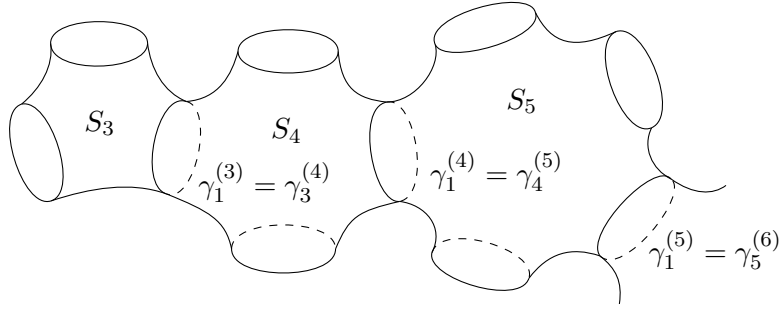
\begin{figure}[ht]
		\begin{tikzpicture}[scale=0.85]

\draw (-3.95,2.15) .. controls (-3.95,2.6) and (-2.45,2.6) .. (-2.45,2.15);
\draw (-3.95,2.15) .. controls (-3.95,1.65) and (-2.45,1.7) .. (-2.45,2.15);

\draw (-1.05,2.0236) .. controls (-1.05,2.4736) and (0.5,2.4736) .. (0.5,2.0236);
\draw (-1.05,2.0236) .. controls (-1.05,1.5736) and (0.5,1.5736) .. (0.5,2.0236);

\draw[dashed] (-1.15,-1.0588) .. controls (-1.15,-0.5588) and (0.4588,-0.5588) .. (0.4588,-1.0588);
\draw (-1.15,-1.0588) .. controls (-1.15,-1.5588) and (0.4588,-1.5588) .. (0.4588,-1.0588);

\draw (2,2.1324) .. controls (1.8,2.5824) and (3.45,3.0324) .. (3.55,2.6324);
\draw (2,2.1324) .. controls (2.15,1.7824) and (3.7,2.2324) .. (3.55,2.6324);

\draw (4.55,-1.1676) .. controls (4.3,-0.8176) and (5.4,0.3324) .. (5.65,0.0324);
\draw[dashed] (4.55,-1.1676) .. controls (4.9,-1.5176) and (5.95,-0.2176) .. (5.65,0.0324);

\draw[dashed] (1.95,-1.1176) .. controls (2.1,-0.6176) and (3.6,-1.0676) .. (3.45,-1.5176);
\draw (1.95,-1.1176) .. controls (1.85,-1.5176) and (3.3,-1.9676) .. (3.45,-1.5176);

\draw (-1.85,1.2412) .. controls (-2.3,1.4) and (-2.6088,-0.05) .. (-2.1588,-0.2);
\draw[dashed] (-1.85,1.2412) .. controls (-1.4,1.1412) and (-1.7088,-0.3676) .. (-2.1588,-0.2);

\draw (1.25,1.2324) .. controls (0.75,1.1324) and (1.05,-0.3676) .. (1.45,-0.3176);
\draw[dashed] (1.25,1.2324) .. controls (1.7,1.3324) and (1.95,-0.2176) .. (1.45,-0.3176);

\draw (-4.4,1.2) .. controls (-4.85,1.05) and (-4.3912,-0.4588) .. (-3.9412,-0.3088);
\draw (-4.4,1.2) .. controls (-3.95,1.3) and (-3.4912,-0.1588) .. (-3.9412,-0.3088);

\draw (4.8,2.3324) .. controls (4.35,2.1324) and (4.9,0.6324) .. (5.35,0.8324);
\draw (4.8,2.3324) .. controls (5.25,2.4824) and (5.8,1.0324) .. (5.35,0.8324);

\draw (-3.95,2.15) .. controls (-3.9,1.8) and (-4.1,1.45) .. (-4.4,1.2);

\draw (-2.45,2.15) .. controls (-2.4,1.75) and (-2.2088,1.3912) .. (-1.85,1.2412);

\draw (-3.9412,-0.3088) .. controls (-3.3412,-0.1088) and (-2.7088,0) .. (-2.1588,-0.2);

\draw (-1.05,2.0236) .. controls (-1.15,1.6588) and (-1.3412,1.2824) .. (-1.85,1.2412);

\draw (-2.1588,-0.2) .. controls (-1.5,-0.4676) and (-1.2412,-0.6088) .. (-1.15,-1.0588);

\draw (0.4588,-1.0588) .. controls (0.6676,-0.5588) and (0.8,-0.4676) .. (1.45,-0.3176);

\draw (0.5,2.0236) .. controls (0.55,1.4324) and (0.8,1.2824) .. (1.25,1.2324);
\draw (2,2.1324) .. controls (2.1,1.7824) and (1.7,1.2324) .. (1.25,1.2324);

\draw (3.55,2.6324) .. controls (3.8,2.1324) and (4.15,2.0324) .. (4.8,2.3324);

\draw (1.45,-0.3176) .. controls (1.85,-0.3676) and (2.15,-0.7176) .. (1.95,-1.1176);

\draw (3.45,-1.5176) .. controls (3.75,-1.1176) and (4.1,-1.0176) .. (4.55,-1.1676);

\draw (5.65,0.0324) .. controls (5.35,0.1824) and (5.2,0.5324) .. (5.35,0.8324);
\draw (5.65,0.0324) .. controls (6,-0.2176) and (6.3,-0.2176) .. (6.5,-0.0676);
\draw (4.55,-1.1676) .. controls (4.8,-1.3676) and (4.9,-1.5676) .. (4.85,-1.9176);
\node at (-3.1912,0.9088) {$S_3$};
\node at (-0.3,0.7912) {$S_4$};
\node at (3.3384,1.25) {$S_5$};
\node at (-0.6324,-0.0176) {$\gamma^{(3)}_{1}=\gamma^{(4)}_{3}$};
\node at (2.9528,0.156) {$\gamma^{(4)}_{1}=\gamma^{(5)}_{4}$};
\node at (6.3352,-1.0676) {$\gamma^{(5)}_{1}=\gamma^{(6)}_{5}$};
\end{tikzpicture}
		\caption{\small{The hyperbolic surface $X_3$ in Example \ref{ex: hyperbolic disks} for the case that the removable set for $\Gamma_{X_3}$ is a disjoint union of hyperbolic disks.}}
		\label{fig:removable set for hyperbolic disks}
	\end{figure}
\end{example}

     \subsection{The relation between the Shiga's condition and the geometric condition $(\star)$.} To investigate the relation between the two conditions,  we first recall some terminology as follows.

     We say that $\mathcal{P}=\{C _{i}\}^{\infty}_{i=1}$ is an \emph{upper-bounded pants decomposition} of $X_{0}$ if there exists a constant $M>0$ such that $\ell_{C_{i}}(X_{0})\leq M$ for each $i\in \mathbb{N}$. Similarly,  we say that $\mathcal{P}=\{C _{i}\}^{\infty}_{i=1}$ is a \emph{lower-bounded pants decomposition} of $X_{0}$ if there exists a constant $m>0$ such that $\ell_{C_{i}}(X_{0})\geq m$ for each $i\in \mathbb{N}$. Furthermore, $\mathcal{P}=\{C _{i}\}^{\infty}_{i=1}$ is said to be a \emph{bounded pants decomposition} of $X_{0}$ if it is both an upper-bounded pants decomposition and a lower-bounded pants decomposition of $X_{0}$. Recall that a hyperbolic surface $X$ of infinite type satisfies the \emph{Shiga's condition} (see \cite{Shiga}) if it admits a bounded pants decomposition.


      We claim that there is no direct relation between the Shiga's condition and the geometric condition $(\star)$.

       Indeed,
       consider the surface $X_{0}$ in Example \ref{Ex:my} and the flute surface $X_{0}'$ in Example \ref{EX2}. The length of the boundary component $\beta_{n}$ of $X_{0}$ (resp. $X_{0}'$) tends to infinity, as $n\rightarrow\infty$. In Example \ref{ex: neighbourhoods of a geodesic}, the surface $X_2$ has a subsequence of boundary components $\{\gamma_n\}$ whose lengths tend to zero. Therefore, these surfaces $X_{0}$, $X_{0}'$ and $X_2$ do not satisfy the Shiga's condition while they satisfy the geometric condition $(\star)$.

       On the other hand, we can find a complete hyperbolic surface $Y_{0}$ of infinite type which has infinitely many geodesic boundary components and satisfies the Shiga's condition but does not satisfy the geometric condition $(\star)$. The surface $Y_{0}$ is constructed as follows:

                 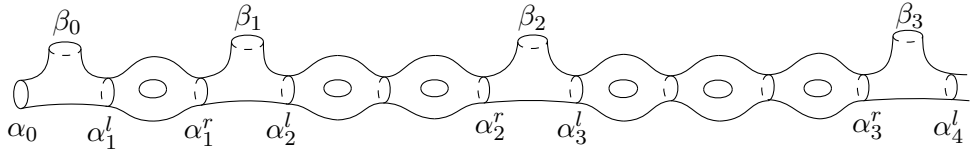
\begin{figure}[ht]
\begin{tikzpicture}[scale=0.3]
\draw (-4.7,0.5) .. controls (-4.7,0.9) and (-3.3,0.9) .. (-3.3,0.5);
\draw[dashed] (-4.7,0.5) .. controls (-4.7,0.1) and (-3.3,0.1) .. (-3.3,0.5);
\draw (-4.7,0.5) .. controls (-4.7,-0.3) and (-4.95,-0.85) .. (-5.95,-0.9);
\draw (-5.95,-0.9) .. controls (-6.35,-0.9) and (-6.35,-2.1) .. (-5.95,-2.15);
\draw (-5.95,-0.9) .. controls (-5.5,-0.9) and (-5.5,-2.15) .. (-5.95,-2.15);
\draw (-3.3,0.5) .. controls (-3.3,-0.35) and (-3.05,-0.8) .. (-2.1,-0.8);
\draw (-5.95,-2.15) .. controls (-4.75,-1.9) and (-3.15,-1.9) .. (-2.15,-2.05);
\draw (-2.1,-0.8) .. controls (-1.75,-0.85) and (-1.75,-2.05) .. (-2.15,-2.05);
\draw[dashed] (-2.1,-0.8) .. controls (-2.5,-0.85) and (-2.5,-2.05) .. (-2.15,-2.05);
\draw (-2.1,-0.8) .. controls (-1.25,-0.8) and (-1.15,0.1) .. (-0.1,0.15);
\draw (-2.15,-2.05) .. controls (-1.3,-2.1) and (-1.3,-2.7) .. (-0.1,-2.7);
\draw (-0.1,0.15) .. controls (1.15,0.1) and (1.15,-0.8) .. (1.95,-0.8);
\draw (-0.1,-2.7) .. controls (1.1,-2.65) and (1.1,-2.05) .. (1.95,-2);
\draw (-0.75,-1.3) .. controls (-0.75,-0.75) and (0.45,-0.7) .. (0.45,-1.3);
\draw (-0.75,-1.3) .. controls (-0.75,-1.8) and (0.45,-1.8) .. (0.45,-1.3);
\draw [dashed](1.95,-0.8) .. controls (1.6,-0.8) and (1.6,-1.95) .. (1.95,-2);
\draw (1.95,-0.8) .. controls (3.2,-0.6) and (3.25,-0.1) .. (3.3,0.8);
\draw (1.95,-2) .. controls (3.3,-1.75) and (4.55,-1.75) .. (5.75,-1.95);
\draw (3.3,0.8) .. controls (3.3,1.2) and (4.7,1.2) .. (4.7,0.8);
\draw[dashed] (3.3,0.8) .. controls (3.3,0.35) and (4.7,0.35) .. (4.7,0.8);
\draw (4.7,0.8) .. controls (4.7,-0.05) and (4.7,-0.65) .. (5.8,-0.7);
\draw (5.8,-0.7) .. controls (6.15,-0.7) and (6.15,-1.9) .. (5.75,-1.95);
\draw[dashed] (5.8,-0.7) .. controls (5.4,-0.7) and (5.4,-1.9) .. (5.75,-1.95);
\draw (5.8,-0.7) .. controls (6.7,-0.7) and (6.75,0.15) .. (7.95,0.2);
\draw (5.75,-1.95) .. controls (6.7,-1.95) and (6.75,-2.55) .. (7.95,-2.65);
\draw (7.95,0.2) .. controls (9.2,0.15) and (9.2,-0.7) .. (10,-0.75);
\draw (7.95,-2.65) .. controls (9.1,-2.6) and (9.1,-1.9) .. (9.95,-1.9);
\draw (7.35,-1.25) .. controls (7.35,-0.75) and (8.55,-0.75) .. (8.55,-1.25);
\draw (7.35,-1.25) .. controls (7.35,-1.75) and (8.55,-1.75) .. (8.55,-1.25);
\draw (10,-0.75) .. controls (10.9,-0.75) and (10.95,0.2) .. (12.25,0.2);
\draw (9.95,-1.9) .. controls (11,-1.9) and (11.1,-2.55) .. (12.2,-2.6);
\draw (11.6,-1.25) .. controls (11.6,-0.75) and (12.8,-0.75) .. (12.8,-1.25);
\draw (11.6,-1.25) .. controls (11.6,-1.75) and (12.8,-1.75) .. (12.8,-1.25);
\draw (10,-0.75) .. controls (10.35,-0.75) and (10.35,-1.9) .. (9.95,-1.9);
\draw[dashed] (10,-0.75) .. controls (9.65,-0.75) and (9.65,-1.9) .. (9.95,-1.9);
\draw (12.25,0.2) .. controls (13.3,0.2) and (13.35,-0.7) .. (14.35,-0.75);
\draw (12.2,-2.6) .. controls (13.25,-2.55) and (13.3,-1.9) .. (14.35,-1.9);
\draw (14.35,-0.75) .. controls (14.7,-0.75) and (14.7,-1.9) .. (14.35,-1.9);
\draw[dashed] (14.35,-0.75) .. controls (14,-0.75) and (14.05,-1.9) .. (14.35,-1.9);
\draw (14.35,-0.75) .. controls (15.2,-0.7) and (15.8,-0.1) .. (15.85,0.8);
\draw (14.35,-1.9) .. controls (15.9,-1.75) and (16.95,-1.75) .. (18.4,-1.9);
\draw (15.85,0.8) .. controls (15.85,1.2) and (17.15,1.2) .. (17.15,0.85);
\draw[dashed] (15.85,0.8) .. controls (15.85,0.4) and (17.15,0.4) .. (17.15,0.85);
\draw (17.15,0.85) .. controls (17.15,-0.15) and (17.65,-0.65) .. (18.4,-0.7);
\draw (18.4,-0.7) .. controls (18.8,-0.7) and (18.8,-1.9) .. (18.4,-1.9);
\draw [dashed](18.4,-0.7) .. controls (18.05,-0.75) and (18.05,-1.9) .. (18.4,-1.9);
\draw (18.4,-0.7) .. controls (19.35,-0.65) and (19.35,0.2) .. (20.5,0.25);
\draw (18.4,-1.9) .. controls (19.4,-1.9) and (19.4,-2.65) .. (20.5,-2.7);
\draw (20.5,0.25) .. controls (21.6,0.2) and (21.65,-0.65) .. (22.55,-0.7);
\draw (20.5,-2.7) .. controls (21.7,-2.65) and (21.65,-1.9) .. (22.55,-1.8);
\draw (19.85,-1.25) .. controls (19.85,-0.75) and (21.1,-0.75) .. (21.1,-1.25);
\draw (19.85,-1.25) .. controls (19.85,-1.8) and (21.1,-1.8) .. (21.1,-1.25);
\draw (1.95,-0.8) .. controls (2.35,-0.8) and (2.35,-1.95) .. (1.95,-2);
\draw (22.55,-0.7) .. controls (23,-0.7) and (22.95,-1.8) .. (22.55,-1.8);
\draw [dashed](22.55,-0.7) .. controls (22.15,-0.7) and (22.15,-1.8) .. (22.55,-1.8);
\draw (22.55,-0.7) .. controls (23.5,-0.65) and (23.55,0.15) .. (24.65,0.2);
\draw (22.55,-1.8) .. controls (23.55,-1.85) and (23.6,-2.65) .. (24.7,-2.7);
\draw (24.65,0.2) .. controls (25.85,0.2) and (25.85,-0.65) .. (26.85,-0.65);
\draw (24.7,-2.7) .. controls (25.95,-2.65) and (25.95,-1.85) .. (26.8,-1.8);
\draw (24,-1.3) .. controls (24,-0.75) and (25.25,-0.75) .. (25.25,-1.3);
\draw (24,-1.3) .. controls (24,-1.8) and (25.25,-1.8) .. (25.25,-1.3);
\draw (26.85,-0.65) .. controls (27.2,-0.65) and (27.15,-1.8) .. (26.8,-1.8);
\draw[dashed] (26.85,-0.65) .. controls (26.45,-0.65) and (26.45,-1.8) .. (26.8,-1.8);
\draw (26.85,-0.65) .. controls (27.95,-0.6) and (27.95,0.2) .. (29.05,0.3);
\draw (26.8,-1.75) .. controls (27.95,-1.8) and (28.05,-2.55) .. (29.1,-2.6);
\draw (29.05,0.3) .. controls (30.1,0.25) and (30.1,-0.6) .. (31,-0.65);
\draw (29.1,-2.6) .. controls (30.1,-2.55) and (30.1,-1.8) .. (31,-1.8);
\draw (28.4,-1.25) .. controls (28.4,-0.75) and (29.6,-0.75) .. (29.6,-1.25);
\draw (28.4,-1.25) .. controls (28.4,-1.75) and (29.6,-1.75) .. (29.6,-1.25);
\draw (31,-0.65) .. controls (31.35,-0.65) and (31.3,-1.8) .. (31,-1.8);
\draw [dashed](31,-0.65) .. controls (30.65,-0.6) and (30.65,-1.8) .. (31,-1.8);
\draw (31,-0.65) .. controls (32.1,-0.6) and (32.25,0) .. (32.3,1.05);
\draw (31,-1.8) .. controls (32.15,-1.7) and (33.8,-1.65) .. (35.65,-1.8);
\draw (32.3,1.05) .. controls (32.3,1.4) and (33.6,1.4) .. (33.6,1.05);
\draw[dashed] (32.3,1.05) .. controls (32.3,0.7) and (33.6,0.7) .. (33.6,1.05);
\draw (33.6,1.05) .. controls (33.6,0.05) and (33.75,-0.65) .. (35.55,-0.65);
\draw (34.85,-0.65) .. controls (35.2,-0.65) and (35.2,-1.75) .. (34.85,-1.75);
\draw [dashed](34.85,-0.65) .. controls (34.5,-0.65) and (34.5,-1.75) .. (34.85,-1.75);
\node at (-3.9,1.65) {$\beta_{0}$};
\node at (4.05,1.85) {$\beta_{1}$};
\node at (16.5,1.85) {$\beta_{2}$};
\node at (33.05,1.95) {$\beta_{3}$};
\node at (-5.85,-3.2) {$\alpha_{0}$};
\node at (-2.35,-3.2) {$\alpha^{l}_{1}$};
\node at (1.9,-3.2) {$\alpha^{r}_{1}$};
\node at (5.5,-3.05) {$\alpha^{l}_{2}$};
\node at (14.65,-2.95) {$\alpha^{r}_{2}$};
\node at (18.25,-3) {$\alpha^{l}_{3}$};
\node at (31.2,-2.9) {$\alpha^{r}_{3}$};
\node at (34.75,-2.95) {$\alpha^{l}_{4}$};
\end{tikzpicture}
\caption{\small{An example $Y_{0}$ which satisfies Shiga's condition but does not satisfy the geometric condition $(\star)$.}}
            \label{fig:EX2}
		\end{figure}

         Let $Z_{0}$ be a flute surface with $\alpha_{n}=\beta_{n}=1$ for $n\geq0$, where $\alpha_{n}$ and $\beta_{n}$ denote the same simple closed geodesics as in Proposition \ref{EX1}. Then we construct $Y_{0}$ by inserting a hyperbolic surface of genus $n$ with two geodesic boundary components $\alpha^{l}_{n}, \alpha^{r}_{n}$ (which admits a pair of pants decomposition with all decomposing curves of length 1) along both sides of $\alpha_{n}$ for $n\geq1$, as indicated in Figure \ref{fig:EX2}. 
          It follows easily that $Y_{0}$ satisfies the Shiga's condition. However, it follows from the construction of $Y_0$ and Lemma \ref{the cover of a removable set} that for any $L>0$ and any removable set $A$ for $\Gamma_{Y_0}$, the projection $\pi(A)$ fails to cover $Y_0\setminus B(\partial Y_0; L)$, where $B(\partial Y_0; L)$ consists of the points on $Y_0$ lying within the distance $L$ of $\partial Y_0$. This implies that $Y_0$ does not satisfy the geometric condition $(\star)$.

          In particular, there exist complete hyperbolic surfaces of infinite type which satisfy both the Shiga's condition and the geometric condition $(\star)$. The surface $Z_{0}$ mentioned above is such an example.

      \end{document}